\titleformat*{\section}{\large\bfseries}
\titleformat*{\subsection}{\normalsize\bfseries}
\newlength{\VerticalSpaceAfterParagraph}
\titlespacing*{\paragraph}{0pt}{\VerticalSpaceAfterParagraph}{1em}
\setlist
  {
    topsep = 5.0pt plus 2.0pt minus 3.0pt,
    partopsep = 1.5pt plus 1.0pt minus 1.0pt,
    parsep = 2.5pt plus 1.25pt minus 0.5pt,
    itemsep = 0pt plus 1.25pt minus 0.5pt
  }
\theoremstyle{plain}
\newtheorem{theorem}{Theorem}
\newtheorem*{theorem*}{Theorem}
\newtheorem{proposition}[theorem]{Proposition}
\newtheorem{lemma}[theorem]{Lemma}
\newtheorem{corollary}[theorem]{Corollary}
\theoremstyle{definition}
\newtheorem{definition}[theorem]{Definition}
\newtheorem{example}[theorem]{Example}
\newtheorem{notation}[theorem]{Notation}
\newtheorem{construction}[theorem]{Construction}
\theoremstyle{remark}
\newtheorem{remark}[theorem]{Remark}
\DeclareRobustCommand\ShowAuthors[2]{%
  \ShowAuthorsSignal.emit({#1},{#2})%
}
\DeclareRobustCommand\ShowAffiliations[1]{%
  \ShowAffiliationsSignal.emit(#1)%
}
\newcommand\Author[1]{
  \pgfoonew \CurrentPerson=new person()
  \CurrentPerson.set author(#1)
}
\newcommand\Email[1]{
  \CurrentPerson.set email(#1)
}
\newcommand\Address[1]{
  \CurrentPerson.set address(#1)
}
\newcommand\FirstPerson{0}
\newcommand\LastPerson{}
      \edef\FirstPerson{\theid}
    \edef\LastPerson{\theid}
    \noindent\begin{minipage}{\linewidth}
      \noindent\begin{tabular}[t]{@{}l}
\quad \texttt{\theemail}\\
\newcommand\blfootnote[1]
      \renewcommand\thefootnote{}%
\newcommand\BigParenthesis[1]{\multirow{2}{*}{$\raisebox{0.56em}{#1}$}}
\newcommand*\Ta[1]{\rule{0pt}{2.5ex} \BigParenthesis{$#1 \Big($} \!\!\!\!\!}
\newcommand*\Tb[1]{\!\!\!\! \BigParenthesis{$\Big) #1$}}
\newcommand*\abs[1]{\left\lvert #1 \right\rvert}
\newcommand*\smat[1]{\left(\begin{smallmatrix}#1\end{smallmatrix}\right)}
\newcommand*\pmat[1]{\begin{pmatrix}#1\end{pmatrix}}
\crefname{maintheoremAux}{Theorem}{Theorems}
\theoremstyle{plain}
\newenvironment{maintheorem}
  {\maintheoremAux\addcontentsline{toc}{subsubsection}{Theorem~\themaintheoremAux{}}}
  {\endmaintheoremAux}
\renewcommand\themaintheoremAux{\Alph{maintheoremAux}}
\crefname{(part)}{part}{parts}
\Crefname{(part)}{Part}{Parts}
\crefname{eqs}{Equations}{Equations}
\crefname{cond}{condition}{conditions}
\crefname{(cond)}{condition}{conditions}
\Crefname{(cond)}{Condition}{Conditions}
\crefname{(part)}{part}{parts}
\Crefname{(part)}{Part}{Parts}
\crefname{divi}{the divisibility constraint}{the divisibility constraints}
\numberwithin{theorem}{section}
\numberwithin{equation}{section}
\newcommand\Thanks{The author was supported by the Engineering and Physical Sciences Research Council (grant 1820497) while in Loughborough University, and the London Mathematical Society Early Career Fellowship (grant ECF-1920-24) while in Imperial College London.}
\title{Birational geometry of sextic double solids with a compound $A_n$ singularity}
\author{\ShowAuthors{, }{, }}
\date{1st~July 2024}
\newcommand\keywords{Fano 3-folds, Sarkisov program, birational rigidity}
\newcommand\subjclass{14J45 (Primary) 14J30, 14J17, 14E30, 14E05 (Secondary)}
\begin{document}

\maketitle

\begin{abstract}
Sextic double solids, double covers of $\mathbb P^3$ branched along a sextic surface, are the lowest degree Gorenstein terminal Fano 3-folds, hence are expected to behave very rigidly in terms of birational geometry. Smooth sextic double solids, and those which are $\mathbb Q$-factorial with ordinary double points, are known to be birationally rigid. In this article, we study sextic double solids with an isolated compound $A_n$ singularity. We prove a sharp bound $n \leq 8$, describe models for each $n$ explicitly and prove that sextic double solids with $n > 3$ are birationally non-rigid.
\blfootnote{\textup{2020} \textit{Mathematics Subject Classification}. \subjclass{}.}%
\blfootnote{\textit{Keywords}. \keywords{}.}
\blfootnote{\Thanks{}}
\end{abstract}

\tableofcontents

\section{Introduction}

We work with projective varieties over~$\mathbb C$. Classification of algebraic varieties is one of the fundamental goals in algebraic geometry. The Minimal Model Program says that every variety is birational to either a minimal model or a Mori fibre space. Two Mori fibre spaces are birational if they are connected by a sequence of Sarkisov links (see \cite{Sar89}, \cite{Rei91}, \cite{Cor95}, \cite{HM13}). In the extreme case, the Mori fibre space is \emph{birationally rigid}, meaning that it is essentially the unique Mori fibre space in its birational class.

Examples of Mori fibre spaces include Fano varieties. The first birational rigidity result was in the seminal paper by Iskovskikh and Manin \cite{IM71} for smooth quartic 3-folds in~$\mathbb P^4$. A wealth of examples of birationally rigid varieties was given in \cite{CPR00} and \cite{CP17}, by showing that every quasismooth member of the 95 families of Fano 3-folds that are hypersurfaces in weighted projective spaces is birationally rigid. One major consequence of birational rigidity is nonrationality. Birational rigidity remains an active area of research (see \cite{Pro18}, \cite{Kry18}, \cite{AO18}, \cite{dF17}, \cite{CG17}, \cite{CS19}, \cite{EP18}).

Among smooth Fano 3-folds, the projective space has the highest degree~($64$), and sextic double solids, double covers of $\mathbb P^3$ branched along a sextic surface, have the least degree~($2$). In \cite{Isk80}, it is proved that smooth sextic double solids are birationally rigid. It is interesting to see how this changes as we impose singularities on the variety. The paper \cite{Puk97} proved that sextic double solids stay birationally rigid if we impose an ordinary double point, meaning the 3-fold $A_1$ singularity $x_1^2 + x_2^2 + x_3^2 + x_4^2$. A sextic double solid can have up to 65 singular points (see \cite{Bas06}, \cite{JR97}, \cite{Wah98}), and for each $n \leq 65$, there exists a sextic double solid with exactly $n$ ordinary double points and smooth otherwise (see \cite{Bar96}, \cite{CC82}). A sextic double solid with only ordinary double points is birationally rigid if and only if it is factorial, which is true for example if it has at most 14 ordinary double points (see \cite[Theorem~B]{CP10}).

The next natural question is to consider more complicated singularities in the Mori category. We study sextic double solids with an isolated \emph{compound $A_n$} singularity, also called a $cA_n$ singularity, meaning that the general section through the point is the Du Val $A_n$ singularity $x_1 x_2 + x_3^{n+1}$. A $cA_n$ singularity is locally analytically given by $x_1 x_2 + h(x_3, x_4)$ where the least degree among monomials in $h$ is~$n+1$. The first main result of the paper is describing sextic double solids with an isolated $cA_n$ singularity.
\begin{theorem*}[see \cref{mai:con}]
If a sextic double solid has an isolated $cA_n$ point, then $n \leq 8$.
\end{theorem*}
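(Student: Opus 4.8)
The plan is to reduce everything to a statement about a plane curve germ and then bound its multiplicity using $\deg S = 6$. Writing $X=\{y^2=F_6\}\subset\mathbb P(1,1,1,1,3)$, the Jacobian criterion shows that every singular point of $X$ maps to a singular point of the branch sextic $S=\{F_6=0\}$; so I place the $cA_n$ point over $p=[0:0:0:1]$, dehomogenise ($x_3=1$) and obtain the local equation $y^2=f$ with $f=a_2+a_3+a_4+a_5+a_6$, where $a_d$ is a form of degree $d$ and $a_0=a_1=0$ because $p\in\mo{Sing}S$. The quadratic part of $y^2-f$ is $y^2-a_2$, of rank $1+\mo{rank}\,a_2$. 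If $\mo{rank}\,a_2\ge 2$ the splitting lemma turns $y^2-f$ into a nondegenerate quadric in $\ge 3$ variables plus a power of the last one, i.e.\ a $cA_1$ point; hence for $n\ge 2$ we must have $\mo{rank}\,a_2=1$, and after a linear change $a_2=x_0^2$.

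Next I complete the square in $x_0$. Since $\partial_{x_0}^2 f(0)\ne 0$, the splitting lemma gives a formal isomorphism $f\cong \tilde x_0^{\,2}+g(x_1,x_2)$, where $g=f|_{\partial_{x_0}f=0}$ is the critical value and $\mult_0 g\ge 3$. Then $y^2-f\cong y^2-\tilde x_0^{\,2}-g$, and the substitution $u=y-\tilde x_0$, $v=y+\tilde x_0$ puts this in the normal form $uv-g(x_1,x_2)$. By definition this germ is a $cA_{\mult_0 g-1}$ point, and the singularity is isolated precisely when $g$ has an isolated critical point at the origin. Thus the theorem is equivalent to the bound $\mult_0 g\le 9$ for every residual $g$ arising this way.

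To bound $\mult_0 g$ I use $\deg f\le 6$. Grouping $f$ by powers of $x_0$ as $f=\sum_i x_0^i P_i(x_1,x_2)$ with $P_2$ a unit, the degree bound forces $\deg P_1\le 5$ and $\deg P_0\le 6$, while $\mult P_1\ge 2$ and $\mult P_0\ge 3$; solving $\partial_{x_0}f=0$ gives $g\equiv P_0-\tfrac14 P_1^2$ modulo higher order. In the representative case where $f$ is quadratic in $x_0$ this is the exact identity $g=d-\tfrac14 c^2$ with $c=P_1$ and $d=P_0$. One may spend $d$ to cancel the terms of $g$ of degree $3,\dots,6$; the surviving part is the degree $7$–$10$ piece of $-\tfrac14 c^2$. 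Reading it off degree by degree, $\mult_0 g=10$ forces, working down from degree $9$ in the domain $\mathbb C[x_1,x_2]$, the successive vanishing $c^{(4)}=c^{(3)}=c^{(2)}=0$, leaving $g=-\tfrac14(c^{(5)})^2$, the square of a quintic form. Such a $g$ is singular along the whole curve $\{c^{(5)}=0\}$, hence is not isolated — a contradiction. Therefore $\mult_0 g\le 9$, and $n\le 8$, in this case.

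The main obstacle is to extend this last step to arbitrary $f$, that is, when $f$ genuinely involves $x_0^3,\dots,x_0^6$ (equivalently when $P_2$ is non-constant or $P_{\ge 3}\ne 0$). Then $g=f|_{\partial_{x_0}f=0}$ is no longer a polynomial of degree $\le 10$ and the clean identity $g=d-\tfrac14 c^2$ acquires correction terms; moreover the sharp value $n=8$ is only realised in this regime, so these corrections cannot be discarded. I expect to handle them by a Newton-polygon (weighted-multiplicity) analysis of $f$: the extra contributions $\phi^i P_i$ with $i\ge 3$ have strictly higher order except for finitely many boundary configurations, so that whenever $\mult_0 g\ge 10$ the leading form of $g$ is again a perfect square and the isolatedness of the $cA_n$ point is violated. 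Making this dichotomy precise — the tension between completing the square, which doubles multiplicities and manufactures square leading terms, and isolatedness, which forbids them — together with the hand-check of the degenerate Newton configurations, is the technical heart of the proof.
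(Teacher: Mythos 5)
Your reduction---pass to the double-cover equation, note that for $n\ge 2$ the quadratic part of the branch sextic has rank $1$, split off the square in the distinguished variable, and identify $n+1$ with $\operatorname{mult}_0 g$ of the residual plane-curve germ---is exactly the paper's first step, and your treatment of the case where $f$ is quadratic in $x_0$ with $P_2=1$ is correct. But the proposal stops where the theorem actually lives, and you say so yourself: the case where $f$ genuinely involves $x_0^3,\dots,x_0^6$ (or a non-constant unit $P_2$) is deferred to an unspecified Newton-polygon analysis, and since the sharp value $n=8$ is only attained in that regime, the bound $n\le 8$ has not been established for any of the singularities the theorem is really about. The paper does this part with an explicit recursive form of the splitting lemma (implemented by computer algebra in its appendix), expressing each homogeneous piece $h_d$ of the residual as a polynomial in the coefficients of $f$; solving $h_3=\dots=h_8=0$ then requires nontrivial structure (writing $a_2=qr$, $b_3=qs+4a_1qr$ with $r,s$ coprime, introducing auxiliary data $e$, $A_0$, $B_1$), and isolatedness is already needed at that stage to exclude three of the four $cA_7$ branches---none of which appears in your sketch.

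Moreover, the principle your sketch of the general case rests on is false as stated. For a power series $g$, ``the leading form of $g$ is a perfect square'' does not violate isolatedness: $g=x_1^{10}+x_2^{11}$ has square leading form and an isolated singularity. Your special case evades this only because there $g$ is a polynomial of degree at most $10$, so the vanishing of all parts of degree $\le 9$ lets you conclude that $g$ \emph{itself}, not merely its leading form, is a square. In general one needs exact identities, and this is precisely what the paper extracts: given the $cA_8$ conditions, $h_9=0$ forces (via the same solve-the-equation lemma) the existence of $B_0$ and identities under which
\[
x^3a_3+x^2b_4+xc_5+d_6=(s_3+2a_1r_2+xr_2)^2,
\]
together with a matching factorization of $x^3a_2+x^2b_3+xc_4+d_5$, so that $X$ is singular along the explicit curve $\mb V(w,\,t,\,s_3+2a_1r_2+xr_2)$ through the point. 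Producing exact, rather than leading-order, consequences of $h_9=0$ in the presence of the higher $x_0$-powers is exactly the step your proposal is missing.
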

Moreover, in \cref{mai:con} we explicitly parametrize all sextic double solids with an isolated $cA_n$ singularity for every $n \leq 8$. These form $11$ families, as there are 4 families for $cA_7$. Every family except for family~7.4 contains members that are Mori fibre spaces over a point.

We say a few words on bounding the number of $cA_n$ singularities. It is clear that an isolated $cA_n$ singularity has Milnor number at least~$n^2$. Since the third Betti number of a smooth sextic double solid is $104$ (see \cite[Table~12.2]{IP99}), an argument similar to \cite[Section~3.2]{AK16} shows that the total Milnor number of a sextic double solid which is a Mori fibre space is at most $104$. This gives the bounds that a Mori fibre space sextic double solid can have up to 1 $cA_8$ singularity, or up to 2 $cA_7$ singularities, or up to 2 $cA_6$ singularities, \ldots, or up to 26 $cA_2$ singularities. We do not expect these bounds to be sharp, as already for ordinary double points it gives an upper bound of~104, far from the actual~65. Using \cref{mai:con}, it is possible to construct sextic double solids with a $cA_8$ point, a $cA_3$ point and two ordinary double points with both total Milnor and total Tjurina number at least~66.

The second main result is the following theorem:
\begin{theorem*}[see \cref{mai:mod} and \cref{thm:mod cA5}]
A general sextic double solid which is a Mori fibre space with an isolated $cA_n$ singularity where $n \geq 4$ is not birationally rigid.
\end{theorem*}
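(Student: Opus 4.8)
The plan is to disprove rigidity constructively: for each relevant family I would exhibit a Sarkisov link from $X$ to a strictly different Mori fibre space. Write $\pi\colon X \to \mathbb{P}^3$ for the double cover and let $p \in X$ be the isolated $cA_n$ point. Locally $X$ is $x_1 x_2 + g(x_3,x_4) = 0$ with $\operatorname{ord}(g) = n+1$, so $p$ lies over a double point $\bar p$ of the branch sextic whose tangent cone is a double plane. The engine of the construction is a divisorial extraction $\varphi\colon Y \to X$ of a single divisor $E$ over $p$: concretely I would take the weighted blowup with weights $(\alpha,\beta,1,1)$ satisfying $\alpha+\beta = n+1$ (so that $x_1x_2$ and $g$ have the same weighted order), chosen so that $E$ is irreducible; by Kawakita's classification of divisorial contractions to $cA_n$ points this has discrepancy $1$, so $-K_Y = \varphi^{*}(-K_X) - E$. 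Because $X$ has Picard rank $1$ and $\varphi$ extracts one divisor, $Y$ has Picard rank $2$, hence exactly two extremal rays, and I may play the 2-ray game.

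Since the discrepancy is always $1$ here, the threshold $n \geq 4$ cannot come from the discrepancy value; it must come from the balance of the 2-ray game against the degree $(-K_X)^3 = 2$. One of the two rays is $\varphi$ itself. Following the other ray, I expect a chain of flops and (anti)flips whose length and type grow with $n$, since they are controlled by how $E$ and $\varphi^{*}(-K_X)$ sit in the cones of $Y$ and by the monomials of $g$. The point $p$ should be a maximal centre, and the second ray should terminate in a genuinely new model, precisely when an explicit inequality in $n$ and the degree $2$ holds; I would check that this inequality first holds at $n = 4$, so that for $n \leq 3$ the second ray untwists back to $X$ (consistent with the known rigidity of the smooth and ordinary-double-point solids), while for $n \geq 4$ the anticanonical model of the final variety either contracts a divisor to a new Fano threefold or fibres over a base, producing a second Mori fibre space $X'$.

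The decisive step is to identify $X'$ and prove $X' \not\cong X$. I would track the invariants surviving the link, namely the degree $(-K_{X'})^3$, the type and number of singular points, and whether $X'$ is of Fano or of fibred type, and show these differ from those of $X$; any such discrepancy certifies that $X$ and $X'$ are non-isomorphic Mori fibre spaces in one birational class, which is exactly the failure of rigidity. Because the classification in \cref{mai:con} produces eleven families and the shape of $g$, hence of the extraction and of the subsequent 2-ray game, varies with $n$, I would carry this out family by family, restricting throughout to a very general member that is a Mori fibre space (thereby excluding, for instance, family~7.4), and treating $cA_5$ separately since its model behaves differently from the others.

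I expect the main obstacle to be the control of the 2-ray game: showing that the sequence of flips and flops actually terminates in a Mori fibre space rather than cycling or degenerating, and computing the intermediate contractions explicitly enough to read off the invariants of $X'$ and distinguish it from $X$. A secondary difficulty is the interaction of the genericity hypothesis with the extraction, that is, ensuring that for a very general member of each family the divisor $E$ is irreducible with the predicted discrepancy and meets the flopping and flipping loci in the expected way, together with the verification that the cases $n \leq 3$ genuinely untwist back to $X$, so that the bound $n \geq 4$ is sharp.
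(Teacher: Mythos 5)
Your outline follows the same route as the paper (a discrepancy-one Kawakita blowup with weights $(\alpha,\beta,1,1)$, $\alpha+\beta=n+1$, followed by the 2-ray game ending in a new Mori fibre space), but two of your decisive steps have genuine gaps. The most serious one is your method for certifying $X'\not\cong X$. You propose to distinguish the end product by $(-K)^3$, by the number and type of singular points, and by Fano versus fibred type; but in the $cA_5$ case -- which you flag as special without saying how you would handle it -- the link of \cref{subsec:mod cA5} lands back in the \emph{same family}: $Z$ is again a sextic double solid with a single $cA_5$ point, so every invariant on your list coincides with that of $X$, and your argument proves nothing. The paper needs a finer argument here: using the explicit splitting lemma (\cref{thm:ana splitting explicit}) it computes the degree-$6$ parts of the local analytic equations of $(X,P_x)$ and $(Z,P_\beta)$, and then a dimension count shows they cannot be matched by a linear change of coordinates for general $c_4$ (the degree-$6$ part of the germ of $X$ moves in a $5$-parameter family as $c_4$ varies while that of $Z$ stays fixed, and $\dim\operatorname{GL}(2,\mathbb C)=4$); by \cref{thm:ana stable equivalence} and \cref{thm:ana lowest degree parts} this forces $X\not\cong Z$. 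Some argument of this kind is unavoidable, since a Sarkisov self-link to an isomorphic variety (which is exactly what happens for $cA_3$) is compatible with rigidity.

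The second gap is that the engine of the construction is absent: ``I expect a chain of flops and (anti)flips'' is a hope, not a mechanism, and you yourself defer termination and explicitness of the 2-ray game as ``the main obstacle.'' The paper's mechanism is concrete: re-embed $X$ (using unprojection) so that the blowup $Y_0$ sits inside a toric variety $T_0$ whose ideal \emph{2-ray follows} $T_0$ in the sense of \cref{cons:pre}; then every flop, flip, divisorial contraction or fibration in the game for $Y_0$ is obtained by restricting the purely toric 2-ray link of $T_0$ and checked locally analytically (this is also where \cref{thm:wei cAn wt correct implies Kawakita blup} enters, to pass from Kawakita's local-analytic classification to an actual algebraic weighted blowup of the projective $X$ -- a step you silently assume). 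Finally, your heuristic that the threshold $n\geq 4$ should come from ``an explicit inequality in $n$ and the degree $2$,'' with $p$ a maximal centre, is not how the bound arises and is not needed: non-rigidity only requires exhibiting one link to a non-isomorphic Mori fibre space, and the threshold emerges case by case because for $n\leq 3$ the same construction happens to end in a self-link or a bad link, while for each of the seven families with $n\geq 4$ it ends in a genuinely different Fano or in a del Pezzo fibration.
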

Birational non-rigidity for a sextic double solid $X$ is proved by describing a birational model, meaning a Mori fibre space $T \to S$ such that $X$ and $T$ are birational. We find the birational models by explicitly constructing a Sarkisov link for each family of sextic double solids, under the generality conditions described in \cref{def:mod generality conds}. \Cref{tab:SDS bir mod overview} gives an overview of the Sarkisov links $X \gets Y_0 \dashrightarrow Y_2 \to Z$ and the birational models, which are either fibrations $Y_2 \to Z$ or Fano varieties $Z$. In the latter case, $Y_2 \to Z$ is a divisorial contraction to the given singular point. The morphism $Y_0 \to X$ is a divisorial contraction with centre the $cA_n$ point. The birational maps $Y_0 \dashrightarrow Y_2$ are isomorphisms in codimension~1.

\newcommand\spacing{\vspace{4pt}}
\newcommand\ali[1]{\begin{aligned}#1\end{aligned}}
\newcommand\stext[1]{\begin{minipage}{.12\textwidth} \spacing #1 \spacing \end{minipage}}
\newcommand\smtext[1]{\begin{minipage}{.18\textwidth} \spacing #1 \spacing \end{minipage}}
\newcommand\mtext[1]{\begin{minipage}{.25\textwidth} \spacing #1 \spacing \end{minipage}}
\begin{table}[h]
\centering
\caption{Birational models for general sextic double solids that are Mori fibre spaces with an isolated $cA_n$ singularity\label{tab:SDS bir mod overview}}
\newcommand\negspace{\hspace{-0.1\textwidth}}

\begin{equation*}
\begin{tikzcd}
  & \arrow[ld, "\varphi"'] Y_0 \arrow[r, dashed] & Y_2 \arrow[rd, "\psi"] & \\
cA_n \in X_6 \subseteq \mathbb P(1^4, 3) \negspace & & & Z
\end{tikzcd}
\end{equation*}

\medskip

\rowcolors{2}{black!5}{white}
\begin{tabular}{ccccc}
\toprule
$cA_n$ & \stext{weighted\\blowup $\varphi$} & \begin{tikzcd}{} \arrow[r, dashed] & {}\end{tikzcd} & \stext{weighted\\blowup or\\fibration $\psi$\par} & $Z$\\
\midrule
\hyperref[subsec:mod cA4]{$cA_4$} & $(3, 2, 1, 1)$ & \mtext{$10$ Atiyah flops} & $(\frac14, \frac14, \frac34)$ & \smtext{$\frac14(1, 1, 3) \in Z_{5, 6}$\par\noindent$\subseteq \mathbb P(1^3, 2, 3, 4)$}\\
\hyperref[subsec:mod cA5]{$cA_5$} & $(3, 3, 1, 1)$ & \mtext{$4$ Atiyah flops} & $(3, 3, 1, 1)$ & \mtext{$cA_5 \in Z_6 \subseteq \mathbb P(1^4, 3)$, \par\noindent $X \ncong Z$ if general}\\
\hyperref[subsec:mod cA6]{$cA_6$} & $(4, 3, 1, 1)$ & \mtext{$2$ Atiyah flops, then\par\noindent$(4, 1, 1, -2, -1; 2)$-flip\par} & $(3, 1, 1, 1)$ & $cA_3 \in Z_5 \subseteq \mathbb P(1^4, 2)$\\
\hyperref[subsec:mod cA7-1]{$cA_7$, 1} & $(4, 4, 1, 1)$ & \mtext{two $(4, 1, 1, -2, -1; 2)$-\par\noindent flips} & $(1, 1, 1, 1)$ & $\operatorname{ODP} \in Z_{3, 4} \subseteq \mathbb P(1^4, 2^2)$\\
\hyperref[subsec:mod cA7-2]{$cA_7$, 2} & $(4, 4, 1, 1)$ & \mtext{Atiyah flop, then\par\noindent two $(4, 1, -1, -3)$-flips\par} & $(3, 3, 2, 1)$ & $cA_2 \in Z_{2, 4} \subseteq \mathbb P(1^5, 2)$\\
\hyperref[subsec:mod cA7-3]{$cA_7$, 3} & $(4, 4, 1, 1)$ & \mtext{$2$ Atiyah flops} & $\operatorname{dP_2}$-fibration & $\mathbb P^1$\\
\hyperref[subsec:mod cA8]{$cA_8$} & $(5, 4, 1, 1)$ & \mtext{$(4, 1, 1, -2, -1; 2)$-flip\par} & $(3, 2, 2, 1, 5)$ & $cD_4 \in Z_{3, 3} \subseteq \mathbb P(1^5, 2)$\\
\bottomrule
\end{tabular}
\end{table}

Note that when we say that a birational map $Y_0 \dashrightarrow Y_1$ is $k$ Atiyah flops, then we mean that algebraically it is one flop, contracting $k$ curves to $k$ points and extracting $k$ curves, and locally analytically around each of those points, it is an Atiyah flop. Similarly for flips. Also note that the Sarkisov link to a sextic double solid with a $cA_4$ singularity was already described in \cite[Section~9, No.~9]{Oka14}, starting from a general quasismooth complete intersection $X_{5, 6} \subseteq \mathbb P(1, 1, 1, 2, 3, 4)$.

We briefly describe the proof. The first step in the Sarkisov link starting from a Fano variety $X$ is a divisorial contraction $Y \to X$. Kawakita described divisorial contractions to $cA_n$ points locally analytically, showing that they are certain weighted blowups. To construct Sarkisov links, we need a global description. In \cref{thm:wei cAn wt correct implies Kawakita blup,thm:wei cAn p v}, we show how to construct divisorial contractions to $cA_n$ points algebraically on affine hypersurfaces, and use this in \cref{sec:mod} to construct divisorial contractions $Y \to X$ for (projective) sextic double solids~$X$. Using unprojection techniques (see \cite{PR04} for a general theory of unprojection), we find an embedding of $Y$ inside a toric variety~$T$, such that the 2-ray link of $T$ restricts to a Sarkisov link for~$X$ (following \cite{BZ10} and \cite{AZ16}).

If we try the same methods as in the proof of \cref{mai:mod} on sextic double solids with a $cA_n$ singularity where $n \leq 3$, then we do not find any new birational models. More precisely: a $(3, 1, 1, 1)$-Kawakita blowup of a $cA_3$ singularity on a general Mori fibre space sextic double solid initiates a Sarkisov link to itself $X \dashrightarrow X$. A $(2, 2, 1, 1)$-Kawakita blowup for a~$cA_3$ singularity, a $(2, 1, 1, 1)$-Kawakita blowup for a $x_1 x_2 + x_3^3 + x_4^3$ singularity and the (usual) blowup for an ordinary double point on a general Mori fibre space sextic double solid initiate `bad links', which end in either a non-terminal 3-fold or a K3-fibration. These are 2-ray links which are not Sarkisov links, where in the last step of the 2-ray game only $K$-trivial curves are contracted, leaving the Mori category. We expect that general Mori fibre space sextic double solids with a $cA_3$ singularity are birationally rigid, and with certain $cA_2$ or $cA_1$ singularities are birationally superrigid.

\subsection*{Organization of the paper}
In \cref{sec:pre singularities,,sec:pre div contr,,sec:pre sark links}, we give known results that we use respectively in \cref{sec:constructing sds with cAn,,sec:wei,,sec:mod}.
In \cref{sec:constructing sds with cAn}, we construct a parameter space of sextic double solids in \cref{mai:con} with an isolated $cA_n$ singularity.
In \cref{sec:wei}, we explain the relationship between algebraic and local analytic weighted blowups, and in \cref{thm:wei cAn wt correct implies Kawakita blup} and the technical \cref{thm:wei cAn p v}, we show how to construct divisorial contractions to $cA_n$ points algebraically on affine hypersurfaces.
In \cref{sec:mod}, we construct birational models for general sextic double solids which are Mori fibre spaces with an isolated $cA_n$ singularity where $n \geq 4$, thereby showing that they are not birationally rigid. We treat the 7 families separately.

\section{Preliminaries} \label{sec:Preliminaries}

An algebraic variety is an integral separated scheme of finite type over the complex numbers~$\mathbb C$. When we say `morphism', we mean a morphism over~$\mathbb C$.

We study sextic double solids, which are double covers of the projective $3$-space branched along a sextic surface. We use the following equivalent characterization:

\begin{definition}
A \textbf{sextic double solid} is the variety given by the zero locus of an irreducible polynomial $w^2 + g$ in the weighted projective space $\mathbb P(1, 1, 1, 1, 3)$ with variables $x, y, z, t, w$, where $g \in \mathbb C[x, y, z, t]$ is homogeneous of degree six.
\end{definition}

\subsection{Singularity theory} \label{sec:pre singularities}

We recall some results from the singularity theory of complex analytic spaces and terminal singularities.

We denote the variables on $\mathbb C^n$ by $\bm x = (x_1, \ldots, x_n)$, where $n$ is a positive integer. Let $\mathbb C\{\bm x\}$ denote the convergent power series ring. The zero set of an ideal $I \subseteq \mathbb C\{\bm x\}$ is denoted by $\mathbb V(I)$, where $I$ is either an ideal of regular functions or holomorphic functions, depending on the context. Given a point $P \in \mathbb V(I)$, the pair $(\mathbb V(I), P)$ denotes the (possibly reducible or non-reduced) complex space subgerm of $(\mathbb C^n, P)$ given by~$I$. Given a regular or holomorphic function $f$ on a variety or a complex space~$X$, denote the non-zero locus of $f$ by~$X_f$. Given positive integer weights $\bm w = (w_1, \ldots, w_n)$ for~$\bm x$, we can write a non-zero polynomial or power series $f$ as a sum of its weighted homogeneous parts~$f_i$. Then, the \textbf{weight} of $f$, denoted $\operatorname{wt}(f)$, is the least non-negative integer $d$ such that $f_d \neq 0$.
We define $\operatorname{wt}(0) = \infty$.
If $\bm w = (1, \ldots, 1)$, then $d$ is called the \textbf{multiplicity}, denoted $\operatorname{mult}(f)$. A \textbf{hypersurface singularity} is a complex analytic space germ (not necessarily irreducible or reduced) that is isomorphic to $(\mathbb V(f), \bm 0)$ for some $f \in \mathbb C\{\bm x\}$. A singularity is \textbf{isolated} if it has a smooth analytic punctured neighbourhood.

\begin{definition}[{\cite[Definition~2.9]{GLS07}}]
Let $f, g \in \mathbb C\{\bm x\}$.
\begin{enumerate}[label=(\alph*)]
\item We say $f$ and $g$ are \textbf{right equivalent} if there exists a biholomorphic map germ $\varphi\colon (\mathbb C^n, \bm 0) \to (\mathbb C^n, \bm 0)$ such that $g = f \circ \varphi$.
\item We say $f$ and $g$ are \textbf{contact equivalent} if there exists a biholomorphic map germ $\varphi\colon (\mathbb C^n, \bm 0) \to (\mathbb C^n, \bm 0)$ and a unit $u \in \mathbb C\{x_1, \ldots, x_n\}$ such that $g = u(f \circ \varphi)$.
\end{enumerate}
\end{definition}

\begin{remark}[{\cite[Remark~2.9.1(3)]{GLS07}}]
Two convergent power series $f, g \in \mathbb C\{\bm x\}$ are contact equivalent if and only if the complex analytic space germs $(\mathbb V(f), \bm 0)$ and $(\mathbb V(g), \bm 0)$ are isomorphic.
\end{remark}

We use the following proposition in \cref{sec:constructing sds with cAn} to parametrize sextic double solids with a $cA_1$ singularity:

\begin{proposition}[{\cite[Remark~2.50.1]{GLS07}}] \label{thm:ana lowest degree parts}
Let $f, g \in \mathbb C\{x_1, \ldots, x_n\}$ be two contact equivalent power series with zero constant term. Then their multiplicity $m$ (as defined above) is the same and furthermore, $f_m$ and $g_m$ are the same up to an invertible linear change of coordinates.
\end{proposition}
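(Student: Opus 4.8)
The plan is to track the lowest-degree homogeneous part through the contact equivalence. By definition there is a biholomorphic germ $\varphi\colon (\mathbb{C}^n, \bm 0) \to (\mathbb{C}^n, \bm 0)$ and a unit $u \in \mathbb{C}\{\bm x\}$ with $g = u \cdot (f \circ \varphi)$. I would record the shapes of the ingredients: writing $A = D\varphi(\bm 0) \in \mathrm{GL}_n(\mathbb{C})$ for the invertible linear part, we have $\varphi_i(\bm x) = L_i(\bm x) + r_i(\bm x)$ with $L(\bm x) = A\bm x$ linear and $\mult(r_i) \geq 2$, while $c = u(\bm 0) \neq 0$. Since $f \neq 0$, set $m = \mult(f)$ and decompose $f = f_m + f_{m+1} + \cdots$ into homogeneous parts with $f_m \neq 0$. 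Throughout I would use that the associated graded ring of $\mathbb{C}\{\bm x\}$ for the $\mathfrak{m}$-adic filtration is the polynomial ring $\mathbb{C}[\bm x]$, so that taking lowest-degree parts is multiplicative and $\mult$ is additive on products.

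First I would compute the lowest-degree part of $f \circ \varphi$. Substituting $\varphi_i = L_i + r_i$ into the homogeneous polynomial $f_m$ and using $\mult(r_i) \geq 2$, every monomial of $f_m(\varphi_1, \ldots, \varphi_n)$ has degree $\geq m$, and the degree-$m$ part equals $f_m(L_1, \ldots, L_n) = f_m \circ L$. Because $L$ is invertible, $f_m \circ L$ is again a nonzero homogeneous polynomial of degree $m$, since $f_m \circ L = 0$ would force $f_m = (f_m \circ L) \circ L^{-1} = 0$. The higher parts $f_{m+k} \circ \varphi$ only contribute in degrees $\geq m+k > m$. Hence $\mult(f \circ \varphi) = m$ and $(f \circ \varphi)_m = f_m \circ L$.

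Finally I would multiply by the unit. As $\mult(u) = 0$ with leading term $c$, multiplicativity of lowest-degree parts gives $\mult(g) = \mult(u) + \mult(f \circ \varphi) = m$, so the two multiplicities agree, and $g_m = c \cdot (f_m \circ L) = c \cdot f_m(A\bm x)$. It then remains to absorb the scalar $c$: choosing $\lambda \in \mathbb{C}$ with $\lambda^m = c$ and using homogeneity $f_m(\lambda A \bm x) = \lambda^m f_m(A\bm x)$, I obtain $g_m(\bm x) = f_m(\lambda A \bm x)$ with $\lambda A \in \mathrm{GL}_n(\mathbb{C})$, so that $f_m$ and $g_m$ coincide up to the invertible linear substitution $\bm x \mapsto \lambda A \bm x$, as claimed. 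The only real subtlety is this last point: the scalar coming from the unit cannot in general be discarded, but it can be folded into the linear coordinate change precisely because $f_m$ is homogeneous of positive degree.
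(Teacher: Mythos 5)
Your proof is correct. Note that the paper does not actually prove this proposition—it is quoted from \cite[Remark~2.50.1]{GLS07}—so your argument serves as a self-contained justification of that citation, and it is the standard one: lowest-degree parts are multiplicative, composition with a biholomorphism contributes only its linear part $A = D\varphi(\bm 0)$ in degree $m$, and the unit contributes only its value $c = u(\bm 0)$. You also handled the one genuinely delicate point correctly: the scalar $c$ cannot simply be dropped, but since $f_m$ is homogeneous of degree $m \geq 1$ (here zero constant term matters), it can be absorbed into the linear change of coordinates by rescaling $A$ to $\lambda A$ with $\lambda^m = c$.
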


We use the following proposition in \cref{sec:constructing sds with cAn} to construct sextic double solids with a $cA_n$ singularity where $n \geq 2$, as well as in \cref{sec:wei} to describe weighted blowups of $cA_n$ points:

\begin{proposition} \label{thm:ana stable equivalence}
Let $F = x_1^2 + \ldots + x_k^2 + f$ and $G = x_1^2 + \ldots + x_k^2 + g$, where $f$ and $g$ are convergent power series in $\mathbb C\{x_{k+1}, \ldots, x_n\}$ with zero constant term. Then $F$ and $G$ are contact (respectively, right) equivalent if and only if $f$ and $g$ are contact (respectively, right) equivalent.
\end{proposition}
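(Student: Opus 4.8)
The plan is to reduce everything to the Splitting Lemma (Generalized Morse Lemma), cf. \cref{thm:ana splitting explicit} and \cite{GLS07}: every $h \in \mathbb{C}\{\bm x\}$ with $h(\bm 0) = 0$ and $\mathrm{d}h(\bm 0) = 0$ is right- (respectively, contact-) equivalent to a normal form $x_1^2 + \cdots + x_r^2 + \tilde h(x_{r+1}, \ldots, x_n)$, where $r$ is the rank of the Hessian of $h$ at $\bm 0$ and the \emph{residual part} $\tilde h$ has vanishing $2$-jet; crucially, both $r$ and the right- (respectively, contact-) equivalence class of $\tilde h$ are invariants of $h$. The whole proposition is then the observation that adjoining the nondegenerate form $x_1^2 + \cdots + x_k^2$ raises the Hessian rank by $k$ but leaves the residual part unchanged.

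I would dispatch the easy implication first. If $g = f \circ \psi$ for a biholomorphic germ $\psi$ of $(\mathbb{C}^{n-k}, \bm 0)$, then $\varphi = \mathrm{id} \times \psi$ gives $G = F \circ \varphi$, so $F$ and $G$ are right equivalent. For the contact case, writing $g = u \cdot (f \circ \psi)$ with $u$ a unit in $\mathbb{C}\{x_{k+1}, \ldots, x_n\}$, I would rescale: since $u(\bm 0) \neq 0$ the series $u^{-1/2}$ is a well-defined unit, and the biholomorphism $\varphi(\bm x) = (u^{-1/2} x_1, \ldots, u^{-1/2} x_k, \psi)$ together with the multiplier $u$ yields $u \cdot (F \circ \varphi) = (x_1^2 + \cdots + x_k^2) + u\,(f \circ \psi) = G$, as required.

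For the converse I would first observe that the linear parts of $F$ and $f$ coincide (and likewise for $G$ and $g$), so $\mathrm{d}F(\bm 0) \neq 0 \iff \mathrm{d}f(\bm 0) \neq 0$. When the linear part is nonzero the germ is smooth and all the series are equivalent to a coordinate by the implicit function theorem, so there is nothing to prove; and since vanishing of the linear part is itself an invariant, the mixed case cannot occur. In the remaining case $\mathrm{d}f(\bm 0) = \mathrm{d}g(\bm 0) = 0$, the Hessian of $F = (x_1^2 + \cdots + x_k^2) + f$ is block diagonal, so its rank is $k + s$ where $s$ is the Hessian rank of $f$; and extending a splitting $f \sim y_1^2 + \cdots + y_s^2 + \tilde f$ by the identity on $x_1, \ldots, x_k$ shows $F \sim x_1^2 + \cdots + x_{k+s}^2 + \tilde f$. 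Hence $F$ and $f$ have the \emph{same} residual part $\tilde f$, and likewise $G$ and $g$ share a residual part $\tilde g$. Now if $F$ and $G$ are equivalent, then their Hessian ranks agree, whence $f$ and $g$ share a common Hessian rank $s$, and applying the invariance of the residual part to $F \sim G$ yields $\tilde f \sim \tilde g$; reinserting the squares gives $f \sim y_1^2 + \cdots + y_s^2 + \tilde f \sim y_1^2 + \cdots + y_s^2 + \tilde g \sim g$, which is what we wanted.

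The hard part will be the invariance clause of the Splitting Lemma itself — that the residual part is determined up to equivalence independently of the completing-of-squares, and that this holds both for non-isolated singularities and for contact as well as right equivalence. Granting that input, everything else (the Hessian computation, the trivial extension of coordinate changes, and the rescaling trick for the contact case) is bookkeeping.
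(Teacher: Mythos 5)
Your easy direction is fine (the extension-by-identity and the $u^{-1/2}$ rescaling trick for the contact case are correct), and the reduction via linear parts and Hessian ranks is sound bookkeeping. But the proposal has a genuine gap exactly where you place "the hard part": the invariance clause is not an auxiliary fact you can quote from the splitting lemma — it \emph{is} the hard direction of \cref{thm:ana stable equivalence}, in the special case $f, g \in \mathfrak{m}^3$, and after your own reduction that special case carries all the content. Concretely, "the equivalence class of the residual part is an invariant" says: if $x_1^2 + \cdots + x_r^2 + \tilde h_1$ and $x_1^2 + \cdots + x_r^2 + \tilde h_2$ are right (respectively contact) equivalent, then so are $\tilde h_1$ and $\tilde h_2$ — which is precisely the "only if" implication to be proven. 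The splitting lemma in the form available here (\cref{thm:ana splitting}, \cref{thm:ana splitting explicit}, quoting \cite[Theorem~2.47]{GLS07}) asserts existence of the splitting together with uniqueness of the power series $v, p, h$ \emph{for a fixed germ and a fixed splitting variable}; it says nothing about two equivalent germs having equivalent residual parts. So, as written, your argument reduces the proposition to an unproven special case of itself.

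The paper closes exactly this gap by a different mechanism, which is why it never needs your case analysis: by Mather--Yau (\cite{MY82}, \cite[Theorem~2.26]{GLS07}), contact equivalence is detected by the isomorphism class of the Tjurina algebra, and since $\partial F/\partial x_i = 2x_i$ for $i \leq k$, one computes $T_F \cong T_f$ and $T_G \cong T_g$ directly; the right-equivalence case runs identically with Milnor algebras regarded as $\mathbb{C}\{t\}$-algebras (\cite[Theorem~2.28]{GLS07}). This yields both implications at once, with no isolatedness hypothesis and no splitting. To rescue your route you would need an independent, citable proof of the invariance clause for \emph{both} equivalence relations: for right equivalence this is the classical stable-equivalence theorem (e.g.\ in Arnol'd--Gusein-Zade--Varchenko, not among this paper's references), but for contact equivalence the standard proof of that very statement is the Mather--Yau computation above — at which point your splitting-lemma scaffolding becomes unnecessary.
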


\begin{proof}
By a result of Mather and Yau \cite{MY82} (see also \cite[Theorem~2.26]{GLS07}), $f$ and $g$ are contact equivalent if and only the Tjurina algebras $T_f$ and $T_g$ are isomorphic. A simple computation shows that $T_f \cong T_F$ and $T_g \cong T_G$, which proves the proposition for contact equivalence.

The proof for right equivalence is similar. Namely, we use a statement analogous to \cite{MY82}: two elements $h, k \in \mathbb C\{\bm x\}$ with zero constant term are right equivalent if and only if the Milnor algebras $M_h$ and $M_k$ are isomorphic as algebras over the ring $\mathbb C\{t\}$, where $t$ acts on $M_h$, respectively $M_k$, by multiplying by $h$, respectively $k$ (see \cite[Theorem~2.28]{GLS07}).
\end{proof}

Reid defined in \cite[Definition~2.1]{Rei80} that a \textbf{compound Du Val singularity} is a $3$-dimensional singularity where a hypersurface section is a Du Val singularity, also called a surface ADE singularity. The singularity is denoted $cA_n$, $cD_n$ or $cE_n$, respectively, if the general hyperplane section is an $A_n$, $D_n$ or $E_n$ singularity, respectively. Reid showed in \cite[Theorem~0.6]{Rei83} that a $3$-dimensional hypersurface singularity is terminal if and only if it is an isolated compound Du Val singularity.

In this paper, we focus on the most general class of compound Du Val singularities, namely $cA_n$ singularities. Since a surface $A_n$ singularity is given by $x^2 + y^2 + z^{n+1}$, we have the following corollary:

\begin{corollary} \label{thm:pre cAn equation}
Let $n$ be a positive integer. A singularity is of type $cA_n$ if and only if it is isomorphic to the complex analytic space subgerm $(\mathbb V(x_1^2 + x_2^2 + g), \bm 0)$ of $(\mathbb C^4, \bm 0)$ with variables $x_1, x_2, x_3, x_4$ for some convergent power series $g \in \mathbb C\{x_3, x_4\}$ of multiplicity~$n+1$.
\end{corollary}

For a proof of \cref{thm:pre cAn equation}, see \cite[Theorem~2.8]{Kol98}.

The simplest example of a $cA_1$ singularity is the \emph{ordinary double point}, given by $x^2 + y^2 + z^2 + t^2$.

\begin{remark}
Terminal sextic double solids have only isolated hypersurface singularities, therefore only $cA_n$, $cD_n$ and $cE_n$ singularities. Sextic double solids are Gorenstein, since by \cite[Corollary~21.19]{Eis95} every variety with local complete intersection singularities is Gorenstein.
\end{remark}

\subsection{\texorpdfstring
{$\mathbb Q$-factoriality}
{Q-factoriality}}

\begin{definition}
A Weil divisor $D$ on a normal algebraic variety is \textbf{$\mathbb Q$-Cartier} if a positive integer multiple of $D$ is Cartier. A normal algebraic variety $X$ is \textbf{factorial}, respectively \textbf{$\mathbb Q$-factorial}, if every Weil divisor on $X$ is Cartier, respectively $\mathbb Q$-Cartier.
\end{definition}

\begin{definition}
A \textbf{Fano variety} is a normal projective algebraic variety with an ample $\mathbb Q$-Cartier anti-canonical divisor.
\end{definition}

To prove factoriality of certain singular sextic double solids, we use the following proposition by Namikawa.

\begin{proposition}[{\cite[Proposition~2]{Nam97}}] \label{thm:int Nam97 Pic}
Let $X$ be a Fano 3-fold with Gorenstein terminal singularities and $D$ its general effective anti-canonical divisor. Then, the natural homomorphism $\operatorname{Pic}(X) \to \operatorname{Pic}(D)$ is an injection.
\end{proposition}

\begin{remark} \label{rem:int Nam97 typos}
The proof of \cite[Proposition~2]{Nam97} contains a few typos that do not affect the result:
\begin{enumerate}
\item \label{itm:con injection from Pic X to Pic U} The sentence ``Since $\operatorname{Pic}(X) \cong \operatorname{Pic}(U)$, we have shown that\ldots'' should be replaced with ``Since $\operatorname{Pic}(X)$ injects into $\operatorname{Pic}(U)$, we have shown that\ldots''. The isomorphism of $\operatorname{Pic}(X)$ and $\operatorname{Pic}(U)$ would imply that every $X$ that is smooth along $D$ is factorial, which is not true. To see that $\operatorname{Pic}(X)$ injects into $\operatorname{Pic}(U)$ for every Zariski open set $U$ containing $D$, note that since the complement of $U$ in $X$ is of codimension at least~$2$, the class groups $\operatorname{Cl}(X)$ and $\operatorname{Cl}(U)$ are isomorphic. We have a map $\operatorname{Pic}(X) \to \operatorname{Pic}(U)$, since every Weil divisor which is Cartier on $X$ is Cartier on~$U$. The map $\operatorname{Pic}(X) \to \operatorname{Pic}(U)$ is injective.
\item The sentence ``Thus, the complement $X - U$ is of codimension $2$ in $X$'' should be replaced with ``Thus, the complement $X - U$ is of codimension at least $2$ in $X$''.
\item The sentence ``There is a Zariski open subset $U$ of $W$\ldots'' should be replaced with ``There is a Zariski open subset $U$ of $X$\ldots''.
\end{enumerate}
\end{remark}

We remind that a terminal variety is log terminal, see \cite[Definition~2.34]{KM98}. The Picard number of log terminal sextic double solids is $1$ by the following \lcnamecref{pro:int restriction map on Picard groups}:

\begin{proposition} \label{pro:int restriction map on Picard groups}
Let $X$ be a log terminal complete intersection Fano variety of dimension $n \geq 3$ in a weighted projective space~$\mathbb P$. Then, the Picard number of $X$ is~$1$.
\end{proposition}

\begin{proof}
By \cite[Prop~2.1.2]{IP99}, we have isomorphisms $\operatorname{Pic}(\mathbb P) \cong H^2(\mathbb P_{\mathrm{top}}^{\mathrm{an}}, \mathbb Z)$ and $\operatorname{Pic}(X) \cong H^2(X_{\mathrm{top}}^{\mathrm{an}}, \mathbb Z)$, where $\mathbb P_{\mathrm{top}}^{\mathrm{an}}$, respectively $X_{\mathrm{top}}^{\mathrm{an}}$, denotes the underlying topological space of the analytification of~$\mathbb P$, respectively~$X$. By \cite[Proposition~1.4]{Mav99}, the restriction map $H^i(\mathbb P_{\mathrm{top}}^{\mathrm{an}}, \mathbb C) \to H^i(X_{\mathrm{top}}^{\mathrm{an}}, \mathbb C)$ is an isomorphism for $i < n$. By \cite[Corollary 1]{Zha06}, $X$ and $\mathbb P$ are simply connected. The \lcnamecref{pro:int restriction map on Picard groups} now follows from universal coefficient theorems.
\end{proof}

To show that some sextic double solids are not $\mathbb Q$-factorial, we use the \lcnamecref{thm:int non-Cartier} below.

\begin{lemma} \label{thm:int non-Cartier}
Let $X$ be a normal projective variety of Picard number one.
Let $D$ be a non-zero effective $\mathbb Q$-Cartier divisor and $C$ a closed curve in~$X$.
Then $D \cdot C > 0$.
\end{lemma}

\begin{proof}
Replacing $D$ by a suitable multiple, it suffices to consider the case where $D$ is Cartier.
There are no non-zero effective principal divisors on a normal projective variety.
Therefore, since $X$ has Picard number one, either $D$ or $-D$ is ample.
Since $D$ intersects some closed integral curve positively, $D$ is ample by Kleiman's criterion.
Again by Kleiman's criterion, $D$ intersects $C$ positively.
\end{proof}

\subsection{Weighted blowups} \label{sec:pre div contr}

We remind the definition of weighted blowups, \cref{def:pre wei blup}.

\begin{definition} \label{def:pre equivalent morphisms}
Let $\varphi\colon Y \to X$ and $\varphi'\colon Y' \to X'$ be birational morphisms of varieties (or bimeromorphic holomorphisms of complex analytic spaces). We say that an isomorphism $X \to X'$ \textbf{lifts} if there exists an isomorphism $Y \cong Y'$ such that the diagram
\begin{equation*}
\begin{tikzcd}
Y \arrow[r, ""]{r} \arrow[d, "\varphi"]{r} & Y' \arrow[d, "\varphi'"]{r}\\
X \arrow[r, ""]{r} & X'
\end{tikzcd}
\end{equation*}
commutes. We say that $\varphi$ and $\varphi'$ are \textbf{equivalent} if there exists an isomorphism $X \cong X'$ that lifts. We say $\varphi$ and $\varphi'$ are \textbf{locally equivalent} if there exist isomorphic open subsets $U \subseteq X$ and $U' \subseteq X'$ containing the centres of the morphisms $\varphi$ and $\varphi'$ such that the restrictions $\mleft.\varphi\mright|_{\varphi^{-1}U}\colon \varphi^{-1}U \to U$ and $\mleft.\varphi'\mright|_{\varphi'^{-1}U'}\colon \varphi'^{-1}U' \to U'$ are equivalent.
\end{definition}

If we consider the complex analytic space corresponding to a variety or when we wish to emphasize that we are working in the category of complex analytic spaces, we sometimes say \textbf{analytically equivalent} or \textbf{locally analytically equivalent}.

\begin{definition} \label{def:pre wei blup}
Let $n$ be a positive integer and let $\bm w = (w_1, \ldots, w_n)$ be positive integers, called the weights of the blowup. Define a $\mathbb C^*$-action on $\mathbb C^{n+1}$ by $\lambda \cdot (u, x_1, \ldots, x_n) = (\lambda^{-1} u, \lambda^{w_1} x_1, \ldots, \lambda^{w_n} x_n)$ and define $T$ by the geometric quotient $(\mathbb C^{n+1} \setminus \mathbb V(x_1, \ldots, x_n)) / \mathbb C^*$ (or its analytification). Then the map $\varphi\colon T \to \mathbb C^n$, $[u, x_1, \ldots, x_n] \mapsto (u^{w_1} x_1, \ldots, u^{w_n} x_n)$ is called the \emph{$\bm w$-blowup of~$\mathbb C^n$}. If $Z \subseteq \mathbb C^n$ is a closed subvariety (or a closed complex subspace $Z \subseteq D$ where $D \subseteq \mathbb C^n$ is open) and $\tilde Z$ is the closure of $\varphi^{-1}(Z \setminus \{\bm 0\})$ in $T$ (in~$\varphi^{-1} D$), then the restriction $\mleft.\varphi\mright|_{\tilde Z}\colon \tilde Z \to Z$ is called the \emph{$\bm w$-blowup of~$Z$}. Let $\rho\colon Y \to X$ be a surjective birational morphism of varieties (or a surjective bimeromorphic holomorphism of complex spaces). Given an open subset $U \subseteq X$ containing the centre of $\rho$ and an isomorphism $U \cong Z \subseteq \mathbb C^n$ taking a point $P \in X$ to the origin~$\bm 0$, the map $\rho$ is called the \textbf{$\bm w$-blowup of $X$ at~$P$} if the isomorphism $U \cong Z$ lifts to $\rho^{-1}U \to \tilde Z$.
\end{definition}

\begin{remark}
\begin{enumerate}[label=(\alph*), ref=\alph*]
\item A weighted blowup crucially depends both on the isomorphism $U \cong X'$ and a choice of coordinates $x_1, \ldots, x_n$, even though it is not explicit in the notation.
\item Replacing $\bm w$ by $(w_1/g, \ldots, w_n/g)$ in \cref{def:pre wei blup}, where $g$ is the greatest common divisor of $w_1, \ldots, w_n$, gives an isomorphic blowup over~$X$.
\item By \cite[Theorem~5.1.11]{CLS11}, the weighted blowup of an affine space in \cref{def:pre wei blup} coincides with the toric description of subdividing a cone in \cite[Proposition-Definition~10.3]{KM92}.
\end{enumerate}
\end{remark}

We give alternative definitions of weighted blowup in \cref{def:pre wei blup proj} and \cref{def:pre wei blup projan} that we use in \cref{thm:wei wt-resp implies equiv blup}:

\begin{definition} \label{def:pre wei blup proj}
Let $n \in \mathbb Z_{\geq1}$ and $\bm w \in \mathbb Z_{\geq1}^n$. Let $X = \operatorname{Spec} \mathbb C[\bm x]/I$ be an affine variety. Define the $\mathbb Z_{\geq0}$-graded $\mathbb C$-algebra
\[
R_X = \mathbb C\mleft[ \mleft\{ t^d \bar x_i \;\middle|\; i \in \{1, \ldots, n\},\, d \in \{0, \ldots, w_i\} \mright\} \mright],
\]
where $t$ denotes the grading and $\bar x_i \in \mathbb C[\bm x]/I$ denotes the image of $x_i \in \mathbb C[\bm x]$. Define the morphism $\operatorname{Proj} R_X \to X$.
\end{definition}

\begin{definition} \label{def:pre wei blup projan}
Let $n \in \mathbb Z_{\geq1}$ and $\bm w \in \mathbb Z_{\geq1}^n$. Let $D \subseteq \mathbb C^n$ be an open subset. Let $X \subseteq D$ be a closed complex analytic space. For every open $V \subseteq D$, we denote the image of $f \in \mathcal O_{\mathbb C^n}(V)$ in $\mathcal O_X(X \cap V)$ by~$\bar f$.
Define the finitely presented $\mathbb Z_{\geq0}$-graded $\mathcal O_X$-algebra $\mathcal B_X$ to be the sheafification of the presheaf $\mathcal A_X$ given by
\[
\mathcal A_X(U) = \mathcal O_X(U)\mleft[ \mleft\{ t^d \bar x_i \;\middle|\; i \in \{1, \ldots, n\},\, d \in \{0, \ldots, w_i\} \mright\} \mright],
\]
where $U \subseteq X$ is open and $t$ denotes the grading. By \cite[Proposition~II.3.19]{CDG+94}, we have a morphism $\operatorname{Projan} \mathcal B_X \to X$, where $\operatorname{Projan}$ is the analytic homogeneous spectrum.
\end{definition}

\begin{lemma}
The morphisms in \cref{def:pre wei blup proj} and \cref{def:pre wei blup projan} are $\bm w$-blowups.
\end{lemma}

\begin{proof}
First, we show that \cref{def:pre wei blup proj} is the $\bm w$-blowup when $X$ is the affine space $\mathbb A^n = \operatorname{Spec} \mathbb C[x_1, \ldots, x_n]$. Let $S = \mathbb C[u, \bm x]$ be the $\mathbb Z$-graded $\mathbb C$-algebra with grading $(-1, \operatorname{wt} x_1, \ldots, \operatorname{wt} x_n)$ for $u, \bm x$. Let $S_{\geq0}$ be the non-negatively graded part of~$S$. By definition of the geometric quotient, the weighted blowup of $\mathbb A^n$ is given by $\operatorname{Proj} S_{\geq0} \to \mathbb A^n$. The $\mathbb Z_{\geq0}$-graded $\mathbb C$-algebra isomorphism
\[
\begin{gathered}
  S_{\geq0} \to R_{\mathbb A^n}\\
  u \mapsto t^{-1}, \quad x_i \mapsto t^{\operatorname{wt} x_i} x_i
\end{gathered}
\]
induces an isomorphism $\operatorname{Proj} R \to \operatorname{Proj} S_{\geq0}$ over~$\mathbb A^n$.

We show that \cref{def:pre wei blup proj} is the $\bm w$-blowup for any $X$. Define $N = n \cdot \operatorname{lcm}(w_1, \ldots, w_n)$. If $M = x_1^{a_1} \cdot \ldots \cdot x_n^{a_n}$ is any monomial such that $\sum a_i w_i > N$, then $M$ is divisible by $x_k^{N / (n w_k)}$ for some~$k$. It follows that the $N$-th Veronese subring $R_X^{(N)}$ of $R_X$ is generated by its degree $1$ part $(R_X)_N$. Therefore, $\operatorname{Proj} R_X$ is isomorphic over $X$ to $\operatorname{Bl}_{(R_X)_N} X$, where $\operatorname{Bl}_{(R_X)_N} X \to X$ is blowup of $X$ along~$(R_X)_N$.
Since the intersection of $\operatorname{Spec} (R_{\mathbb A^n})_N$ and $X$ is $\operatorname{Spec} (R_X)_N$,
we find that $\operatorname{Bl}_{(R_X)_N} X$ is the strict transform of~$X$ under the blowup of $\mathbb A^n$ along~$(R_{\mathbb A^n})_N$, which coincides with the closure of the inverse image of $X \setminus \mathbb V(x_1, \ldots, x_n)$ in $\operatorname{Bl}_{(R_{\mathbb A^n})_N} \mathbb A^n$.

We show that \cref{def:pre wei blup projan} is the $\bm w$-blowup. We similarly prove that $\operatorname{Projan} \mathcal B_X$ is the closure of the inverse image of $X \setminus \{\bm 0\}$ in $\operatorname{Projan} \mathcal B_D$. Now, it suffices to note that the analytification of $\operatorname{Proj} R_{\mathbb A^n} \to \mathbb A^n$ is $\operatorname{Projan} \mathcal B_{\mathbb C^n} \to \mathbb C^n$.
\end{proof}

In \Cref{thm:wei wt-resp implies equiv blup}, we give a simple criterion for a local biholomorphism to lift to weighted blowups.

\subsection{Divisorial contractions}

The first step in a Sarkisov link from a Fano variety is a divisorial contraction.

\begin{definition}
A \textbf{divisorial contraction} is a proper birational morphism $\varphi\colon Y \to X$ between normal varieties with terminal singularities such that
\begin{enumerate}
\item the exceptional locus of $\varphi$ is a prime divisor and
\item $-K_Y$ is $\varphi$-ample.
\end{enumerate}
\end{definition}

Kawakita \cite{Kaw03} described divisorial contractions with centre a $cA_n$ point by weighted blowups. Notational differences from \cite[Theorem~1.13]{Kaw03} are that below we have left out the description for $cA_1$ singularities and an exceptional case for~$cA_2$. Also, we have written out the converse statement more explicitly (that being a Kawakita blowup implies that it is a divisorial contraction).

\begin{theorem}[{\cite[Theorem~1.13]{Kaw03}}] \label{thm:pre Kawakita blowup}
Let $P$ be a $cA_n$ point where $n \geq 3$ of a variety $X$ with terminal singularities. Let $\varphi\colon Y \to X$ be a morphism of varieties such that the restriction $\mleft.\varphi\mright|_{Y \setminus E}\colon Y \setminus E \to X \setminus \{P\}$ is an isomorphism, where the closed subvariety $E$ is given by~$\varphi^{-1}\{P\}$. If $\varphi$ is a divisorial contraction, then $\varphi$ is locally analytically equivalent to the $(r_1, r_2, a, 1)$-blowup of $\mathbb V(x_1 x_2 + g(x_3, x_4)) \subseteq \mathbb C^4$ at $\bm 0$ with variables $x_1, x_2, x_3, x_4$ where
\begin{enumerate}
\item $a$ divides $r_1 + r_2$ and is coprime to both $r_1$ and $r_2$,
\item $g$ has weight $r_1 + r_2$, and
\item the monomial $x_3^{(r_1 + r_2)/a}$ appears in $g$ with non-zero coefficient.
\end{enumerate}
Moreover, any $\varphi$ which is locally analytically equivalent to a weighted blowup as above is a divisorial contraction, even for $n = 2$.
\end{theorem}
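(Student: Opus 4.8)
This is Kawakita's classification theorem, so I would build the proof of the main (forward) direction around the divisorial valuation $v = \mo{ord}_E$ attached to the exceptional divisor, the goal being to show that in suitable analytic coordinates putting $P$ into the normal form $x_1 x_2 + g(x_3, x_4)$ of \cref{thm:pre cAn equation}, the valuation $v$ is the monomial valuation with weights $\bm w = (v(x_1), v(x_2), v(x_3), v(x_4))$ and that $\varphi$ is the associated weighted blowup of \cref{def:pre wei blup}. Since $P$ is Gorenstein, the discrepancy $a := a(E, X)$ in $K_Y = \varphi^* K_X + a E$ is a positive integer, and on any weighted blowup of $\mb V(x_1 x_2 + g) ⊆ ℂ^4$ adjunction shows that the discrepancy of the induced exceptional divisor equals $(\sum_i w_i) - 1 - \wt_{\bm w}(x_1 x_2 + g)$. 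Hence, once $v$ is known to be monomial, the whole problem collapses to computing the four weights and verifying the numerical conditions (1)--(3).

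The engine of the argument is the general elephant. Let $S$ be a general member of $|-K_X|$ through $P$; since $\omega_X$ is trivial near $P$, this is a general hyperplane section, and so has a Du Val $A_n$ singularity, analytically $x_1 x_2 + x_3^{n+1}$. Writing $T$ for the strict transform of $S$ and $C = E ∩ T$, the identity $K_Y + T = \varphi^*(K_X + S) + (a - v(S)) E$ together with $K_X + S \sim 0$ shows, after restriction to $T$, that $\varphi|_T \co T → S$ extracts the single divisorial valuation induced by $E$ on $S$ with discrepancy $a - v(S)$. Divisorial extractions of a Du Val $A_n$ surface singularity are toric and completely classified: in coordinates with $S = \mb V(x_1 x_2 + x_3^{n+1})$ they are exactly the $(r_1, r_2, a')$-weighted blowups with $r_1 + r_2 = (n+1) a'$ and $\gcd(a', r_1) = \gcd(a', r_2) = 1$, the extracted curve having discrepancy $a' - 1$. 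This already produces three of the weights, $v(x_1) = r_1$, $v(x_2) = r_2$, $v(x_3) = a'$, and delivers conditions (1) and (3).

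The main work --- and the step I expect to be the genuine obstacle --- is lifting this surface picture to the three-fold: proving that $v$ is monomial on all of $ℂ^4$, that the transverse weight is $v(x_4) = 1$, that the surface weight $a'$ coincides with the three-fold discrepancy $a$, and that the coordinates diagonalising $v$ on $S$ extend to coordinates on $ℂ^4$ keeping $X$ in the form $x_1 x_2 + g(x_3, x_4)$ with $\wt_{\bm w}(g) = r_1 + r_2$, which is condition (2). Here I would combine an inversion-of-adjunction-type argument, to the effect that the general elephant computes the full three-fold valuation, with the classification of three-dimensional terminal singularities applied to $Y$ along $E$: terminality forces the cyclic quotient points appearing in the affine charts of the weighted blowup to be terminal, which is exactly what the coprimality in (1) encodes and which is what excludes a transverse weight larger than $1$. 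The delicate, genuinely analytic point is precisely the passage from the normalised weighted-blowup model on $S$ to a monomialisation of $v$ on the ambient germ $ℂ^4$: a naive diagonalisation need not preserve the defining equation, so one must produce a coordinate change that simultaneously standardises the $cA_n$ equation and makes $v$ monomial. This is the technical core of Kawakita's proof and the reason it requires power-series rather than purely combinatorial input.

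For the converse ``Moreover'' direction the plan is direct. Given weights $(r_1, r_2, a, 1)$ satisfying (1)--(3), I would verify that the $(r_1, r_2, a, 1)$-blowup $\varphi \co Y → X$ of $\mb V(x_1 x_2 + g)$ is a divisorial contraction: its exceptional locus is the prime divisor $E$; the adjunction computation above gives discrepancy $a > 0$, so $-K_Y$ is $\varphi$-ample; and a chart-by-chart inspection shows that the singularities of $Y$ are the cyclic quotient singularities visible in the three affine charts of the blowup, which conditions (1)--(3) render terminal. This final verification is insensitive to $n$ and therefore already applies when $n = 2$.
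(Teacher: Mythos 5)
You should first be aware that the paper does not prove this statement: \cref{thm:pre Kawakita blowup} is quoted from \cite[Theorem~1.13]{Kaw03} (with the $cA_1$ case and an exceptional $cA_2$ case omitted, and the converse written out explicitly), so the only meaningful comparison is with Kawakita's own proof, and your sketch does not reconstruct it. More importantly, your plan for the forward direction has a genuine logical gap: it uses the general elephant as an \emph{input}. For your restriction step you need the strict transform $T$ of a general $S \in |-K_X|$ to be normal, to have at worst Du Val singularities, and to meet $E$ in a single irreducible curve, so that $\varphi|_T \co T \to S$ is one of the classified surface extractions of the $A_n$ germ; these facts are essentially Reid's general elephant conjecture for $\varphi$, which is the \emph{conclusion} of \cite{Kaw03} (hence its title), deduced there from the classification rather than fed into it. Kawakita's actual engine is numerical: singular Riemann--Roch on $Y$ and an analysis of the basket of fictitious singularities bound the discrepancy $a$ and the values $v(x_i)$, and the monomialisation of $v$ is then constructed explicitly. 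The step you yourself flag as ``the genuine obstacle'' --- showing $v$ is a monomial valuation with transverse weight $v(x_4)=1$ in coordinates keeping $X$ in the normal form $x_1x_2+g(x_3,x_4)$ --- \emph{is} the theorem, and the two tools you name (an inversion-of-adjunction-type statement and terminality of the quotient points in the charts) do not produce it: terminality of $Y$ constrains the weights once $\varphi$ is already known to be a weighted blowup, but it cannot by itself exclude a divisorial valuation that is monomial in no coordinate system. Note also that identifying the surface weight $a'$ with the 3-fold discrepancy $a$ via your adjunction identity requires $v(S)=1$, i.e.\ already uses part of what is to be proved.

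Your converse paragraph is closer to a complete argument but contains one false claim: the singularities of $Y$ are in general \emph{not} just the cyclic quotient points visible in the charts. Since $Y$ is a hypersurface in the blown-up ambient space, for admissible $g$ it typically carries additional isolated compound Du Val points on $E$ away from the chart origins; this is visible in the present paper itself, where \cref{thm:mod Y only up to two quot sing} needs $X$ \emph{general} precisely to exclude such extra singular points, and where, e.g., the flipped varieties carry ordinary double points on top of quotient points. So the verification must show that $E$ is prime (this is where condition (3), the presence of $x_3^{(r_1+r_2)/a}$, enters) and that \emph{every} singular point of $Y$ along $E$ is terminal, not that $Y$ has only quotient singularities. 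The ampleness part of your converse is fine: $K_Y = \varphi^*K_X + aE$ with $a > 0$, and $-E$ is $\varphi$-ample for a weighted blowup, so $-K_Y$ is $\varphi$-ample.
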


Any weighted blowup that is locally analytically equivalent to $\varphi$ in \cref{thm:pre Kawakita blowup} for $n \geq 2$ is called a \textbf{$(r_1, r_2, a, 1)$-Kawakita blowup}, or simply a Kawakita blowup.

\subsection{Sarkisov links} \label{sec:pre sark links}

One of the possible outcomes of the minimal model program is a Mori fibre space:

\begin{definition}
A \textbf{Mori fibre space} is a morphism of normal projective varieties $\varphi\colon X \to S$ with connected fibres such that
\begin{enumerate}
\item $X$ is $\mathbb Q$-factorial and has terminal singularities,
\item the anti-canonical class $-K_X$ is $\varphi$-ample,
\item $X/S$ has relative Picard number~1, and
\item $\dim S < \dim X$.
\end{enumerate}
If $\dim S > 0$, then we say $\varphi$ is a \emph{strict} Mori fibre space.
\end{definition}

The main examples of Mori fibre spaces we see in this paper are Fano 3-folds that are projective, $\mathbb Q$-factorial, with terminal singularities and Picard number~1, considered as a morphism over a point.

Any birational map between two Mori fibre spaces is a composition of Sarkisov links (see \cite{Cor95} or \cite{HM13}). Below, we describe the two possible types of Sarkisov links starting from a Fano variety.

\begin{definition} \label{def:Sarkisov link}
A \textbf{Sarkisov link} of type I (respectively II) between a Fano variety $X$ and a strict Mori fibre space $Y_k \to Z$ (respectively Fano variety~$Z$) is a diagram of the form
\begin{equation*}
\begin{tikzcd}
& Y_0 \arrow[ld, "\varphi"'] \arrow[r, dashed] & \ldots \arrow[r, dashed] & Y_k \arrow[rd, "\psi"]\\
X & & & & Z
\end{tikzcd}
\end{equation*}
where $X$, $Y_0$, \ldots, $Y_k$, $Z$ are normal, projective and $\mathbb Q$-factorial, the varieties $X$, $Y_0$, \ldots, $Y_k$ have terminal singularities, $Z$ has terminal singularities if it 3-dimensional, $X$ has Picard number~1, $\varphi\colon Y_0 \to X$ is a divisorial contraction, $Y_0 \dashrightarrow \ldots \dashrightarrow Y_k$ is a sequence of anti-flips, flops and flips, and $\psi\colon Y_k \to Z$ is a strict Mori fibre space (respectively divisorial contraction). If we do not require the varieties $X, Y_0, \ldots Y_k$ (respectively $X, Y_0, \ldots Y_k, Z$) to be terminal and we do not require $-K_{Y_0}$ to be $\varphi$-ample and we do not require $-K_{Y_k}$ to be $\psi$-ample but all the other properties hold, then the diagram above is called a \textbf{2-ray link} (\cite[Definition~2.1]{BZ10}).
\end{definition}

\begin{definition}
A Fano 3-fold $X$ that is a Mori fibre space is \textbf{birationally rigid} if for any Mori fibre space $Y \to S$ such that $X$ and $Y$ are birational, we have that $S$ is a point and $X$ and $Y$ are isomorphic.
\end{definition}

In \cref{sec:mod}, we show that a general sextic double solid $X$ with a $cA_n$ singularity with $n \geq 4$ which is a Mori fibre space is not birationally rigid. We show this by explicitly constructing a Sarkisov link between $X$ and another Mori fibre space. We find the Sarkisov link by restricting from a toric 2-ray link, as described in \cref{cons:pre}.

See \cite{Cox95} for the definition of Cox rings for toric varieties (where it is called the \emph{homogeneous coordinate ring}), and \cite[Definition~2.6]{HK00} for the definition of Cox rings for Mori dream spaces. Note that isomorphic varieties can have different Cox rings. By \cite[Theorem~3.7]{Cox95}, closed subschemes of a toric variety $T$ with only cyclic quotient singularities are given by homogeneous ideals in the Cox ring $\operatorname{Cox} T$, which is a polynomial ring.

\begin{construction} \label{cons:pre}
Let $X$ be a Fano variety embedded in a weighted projective space~$\mathbb P$, where $X$ is a Mori fibre space, and let $Y_0 \to X$ be a divisorial contraction from a projective $\mathbb Q$-factorial variety $Y$. By \cite[Lemma~2.9]{AK16}, the divisorial contraction $Y_0 \to X$ can be part of a Sarkisov link only if $Y_0$ is a Mori dream space.

By \cite[Proposition~2.11]{HK00}, we can embed a Mori dream space $Y_0$ into a projective toric variety $T_0$ with cyclic quotient singularities such that the Mori chambers of $Y_0$ are unions of finitely many Mori chambers of~$T_0$. Moreover, we can embed $Y_0$ in such a way that $Y_0$ is given by a homogeneous ideal $I_Y$ in $\operatorname{Cox} T_0$, and the toric 2-ray link
\begin{equation*}
\begin{tikzcd}[column sep = small]
& \arrow[ld, ""'] T_0 \arrow[rd, ""'] \arrow[rr, dashed, ""] & & T_1  \arrow[r, dashed, ""] \arrow[ld, ""']  \arrow[rd, ""'] & \cdots \arrow[r, dashed, ""] & T_r \arrow[ld, ""'] \arrow[rd, ""] \\
\mathbb P & & \mathcal W_0 & & \cdots & & S_T
\end{tikzcd}
\end{equation*}
restricts to a 2-ray link
\begin{equation*}
\begin{tikzcd}[column sep = small]
& \arrow[ld, ""'] Y_0 \arrow[rd, ""'] \arrow[rr, dashed, ""] & & Y_1  \arrow[r, dashed, ""] \arrow[ld, ""']  \arrow[rd, ""'] & \cdots \arrow[r, dashed, ""] & Y_r \arrow[ld, ""'] \arrow[rd, ""] \\
X & & W_0 & & \cdots & & S,
\end{tikzcd}
\end{equation*}
where each $Y_i \subseteq T_i$ is given by the same ideal $I_Y \subseteq \operatorname{Cox} T_0 = \ldots = \operatorname{Cox} T_r$, and $W_i \subseteq \mathcal W_i$ is given by the ideal $I_Y \cap \mathbb C[\nu_0, \ldots, \nu_s]$, where $\mathcal W_i$ is given by $\operatorname{Proj} \mathbb C[\nu_0, \ldots, \nu_s]$ for some polynomials $\nu_j \in \operatorname{Cox} T_0$ that depend on~$i$ (see \cite[Remark~4]{AZ16}). In this case, $\operatorname{Cox}(T_0) / I_Y$ is a Cox ring for~$Y_0$ and we say that $I_Y$ \textbf{2-ray follows} $T_0$. In contrast to \cite[Definition~3.5]{AZ16}, we emphasise the ideal~$I_Y$, since there could be other ideals $I$ satisfying $\mathbb V(I_Y) = \mathbb V(I)$ such that the toric 2-ray link restricts to a 2-ray link for $I_Y$ but not for~$I$.

Note that some of the small birational maps $T_i \dashrightarrow T_{i+1}$ may restrict to isomorphisms $Y_i \to Y_{i+1}$. If all the varieties $Y_i$ are terminal and the anti-canonical divisor $-K_{Y_0}$ of $Y_0$ is inside the interior $\operatorname{int} (\operatorname{Mov} Y_0)$ of the movable cone, then the 2-ray link for $Y_0$ is a Sarkisov link (see \cite[Lemma~2.9]{AK16}), otherwise it is called a \textbf{bad link}.
\end{construction}

In \cref{sec:mod}, where $X$ is a sextic double solid and the centre of $Y_0 \to X$ is a $cA_n$ point, we use a projective version of \cref{thm:wei SDS cAn p} to construct the divisorial contraction $Y_0 \to X$, which is the restriction of a toric weighted blowup $\bar T_0 \to \mathbb P$. This gives us an embedding $Y_0 \to \mathbb V(I_{\bar Y}) \subseteq \bar T_0$ where $I_{\bar Y}$ might not 2-ray follow~$\bar T_0$. We use unprojection to modify $\bar T_0$ to find an embedding $Y_0 \to \mathbb V(I_Y) \subseteq T_0$ such that $I_Y$ 2-ray follows~$T_0$. See \cite[Section~2.1]{Rei00} for a simple example of unprojection, and \cref{subsec:mod cA4,subsec:mod cA7-1,subsec:mod cA7-2,subsec:mod cA8} for applications of unprojection.

To explain the notation we use for 2-ray links, we do an example in detail, namely the 2-ray link for the ambient space of the sextic double solid with a $cA_4$ singularity in \cref{subsec:mod cA4}.

\begin{example}[2-ray link for $\mathbb P(1, 1, 1, 1, 3, 5)$] \label{exa:pre cA4 toric link}
Denote the variables on $\mathbb P(1, 1, 1, 1, 3, 5)$ by $x, y, z, t, \alpha, \xi$. We perform the weighted blowup $T_0 \to \mathbb P(1, 1, 1, 1, 3, 5)$ with weights $(1, 1, 2, 3, 6)$ for variables $y, z, t, \alpha, \xi$, where the centre is the point $P_x = [1, 0, 0, 0, 0, 0]$.

We define $T_0$ as a geometric quotient. By a slight abuse of notation we denote the variables on $\mathbb C^7$ by $u, x, y, z, \alpha, \xi, t$, repeating the symbols for $\mathbb P(1, 1, 1, 1, 3, 5)$. Define a $(\mathbb C^*)^2$-action on $\mathbb C^7$ for all $(\lambda, \mu) \in (\mathbb C^*)^2$ by
\[
(\lambda, \mu) \cdot (u, x, y, z, \alpha, \xi, t) = (\mu^{-1} u, \lambda x, \lambda \mu y, \lambda \mu z, \lambda^3 \mu^3 \alpha, \lambda^5 \mu^6 \xi, \lambda \mu^2 t).
\]
Define the irrelevant ideal $I_0 = (u, x) \cap (y, z, \alpha, \xi, t)$, and define $T_0$ by the geometric quotient $\mathbb C^7 \setminus \mathbb V(I_0) / (\mathbb C^*)^2$. We use the notation
\[
\begin{array}{ccc|ccccccccc}
            &  u & x & y & z & \alpha & \xi & t &\\
\Ta{T_0\colon} &  0 & 1 & 1 & 1 & 3 & 5 & 1 & \Tb{.}\\
            & -1 & 0 & 1 & 1 & 3 & 6 & 2 &
\end{array}
\]
to describe this construction of~$T_0$. Note that we order the variables $u, x, \ldots, t$ such that the corresponding rays $\smat{0\\-1}$, $\smat{1\\0}$, \ldots, $\smat{1\\2}$ are ordered anti-clockwise around the origin. The vertical bar indicates that the irrelevant ideal is $(u, x) \cap (y, z, \alpha, \xi, t)$. The Cox ring of $T_0$ is given by $\operatorname{Cox} T_0 = \mathbb C[u, x, y, z, \alpha, \xi, t]$. The weighted blowup $T_0 \to \mathbb P(1, 1, 1, 1, 3, 5)$ is given by
\begin{equation} \label{eqn:pre 2-ray link example T0 to P}
[u, x, y, z, \alpha, \xi, t] \mapsto [x, uy, uz, u^2t, u^3\alpha, u^6\xi].
\end{equation}

We describe the cones of the toric variety~$T_0$. By \cite{HK00}, $T_0$ is a Mori dream space. The Picard group of $T_0$ is generated by $\mathbb V(u)$, the reduced exceptional divisor, and~$\mathbb V(x)$, the strict transform of a plane not passing through~$P_x$, which have bidegree $\smat{0\\-1}$ and~$\smat{1\\0}$, respectively. The variety $T_0$ is $\mathbb Q$-factorial, and any two divisors with the same bidegree are linearly equivalent. As in \cite[Section~4.1.3]{BZ10}, the effective cone $\operatorname{Eff}(T_0)$ is given by $\langle\mathbb V(u), \mathbb V(x)\rangle$, a cone in the group $N^1(T_0)$ of divisors of $T_0$ up to numerical equivalence with coefficients in~$\mathbb R$. As in \cite[Section~3.2]{AZ16}, the movable cone $\operatorname{Mov(T_0)}$ is $\langle\mathbb V(x), \mathbb V(\xi)\rangle$, and it is divided into the nef cone $\operatorname{Nef}(T_0) = \langle\mathbb V(x), \mathbb V(y)\rangle$ of $T_0$ and $\langle\mathbb V(y), \mathbb V(\xi)\rangle$, which is the pull-back of the nef cone of the small $\mathbb Q$-factorial modification $T_1$ of~$T_0$. The cones $\langle\mathbb V(x), \mathbb V(y)\rangle$ and $\langle\mathbb V(y), \mathbb V(\xi)\rangle$ are called \emph{Mori chambers}. The variety $T_1$ is defined by
\[
\begin{array}{cccccc|cccccc}
            &  u & x & y & z & \alpha & \xi & t &\\
\Ta{T_1\colon} &  0 & 1 & 1 & 1 & 3 & 5 & 1 & \Tb{.}\\
            & -1 & 0 & 1 & 1 & 3 & 6 & 2 &
\end{array}
\]
Here, $T_1$ is the geometric quotient $(\mathbb C^7 \setminus I_1) / (\mathbb C^*)^2$, where the irrelevant ideal $I_1$ is given by $(u, x, y, z, \alpha) \cap (\xi, t)$, which is indicated by the position of the vertical bar in the action-matrix. The Cox ring of $T_1$ is equal to the Cox ring of~$T_0$, namely $\operatorname{Cox} T_1 = \mathbb C[u, x, y, z, \alpha, \xi, t]$.

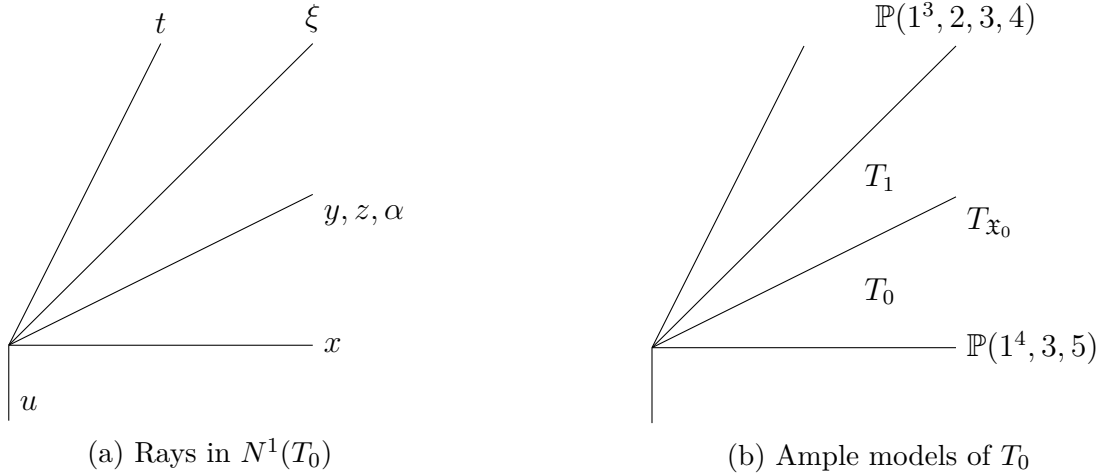
\begin{figure}
  \begin{centering}
    \begin{subfigure}[h]{.4\textwidth}
      \centering
      \begin{tikzpicture}
        \draw (0, 0) -- (0, -1) node [above right] {$u$};
        \draw (0, 0) -- (4, 0) node [right] {$x$};
        \draw (0, 0) -- (4, 1) node [right] {$y, z, \alpha$};
        \draw (0, 0) -- (4, 2) node [above right] {$\xi$};
        \draw (0, 0) -- (2, 2) node [above] {$t$};
        \end{tikzpicture}
      \caption{Rays in~$N^1(T_0)$}
    \end{subfigure}%
    \hspace{.05\textwidth}%
    \begin{subfigure}[h]{.4\textwidth}
      \centering
      \begin{tikzpicture}
        \draw (0, 0) -- (0, -1) node [above right] {};
        \draw (0, 0) -- (4, 0) node [right] {$\mathbb P(1^4, 3, 5)$};
        \draw (3, 0.375) node {$T_0$};
        \draw (0, 0) -- (4, 1) node [right] {$\mathcal W_0$};
        \draw (3, 1.125) node {$T_1$};
        \draw (0, 0) -- (4, 2) node [above right] {$\mathbb P(1^3, 2, 3, 4)$};
        \draw (0, 0) -- (2, 2) node [above] {};
      \end{tikzpicture}
      \caption{Ample models of~$T_0$}
    \end{subfigure}%
    \caption{Cones of $T_0$}
    \label{fig:pre example T0}
  \end{centering}
\end{figure}

The weighted blowup morphism $T_0 \to \mathbb P(1, 1, 1, 1, 3, 5)$ can be read off from the action-matrix of~$T_0$. Consider the ray given by $\mathbb V(x)$ in~$N^1(T_0)$. The union of the linear systems $\abs{\smat{n\\0}}$ where $n \geq 0$ has a $\mathbb C$-algebra basis $x, uy, uz, u^2t, u^3\alpha, u^6\xi$. So, the ample model (see \cite[Definition~3.6.5]{BCHM10}) of the divisor class $\mathbb V(x)$ is the morphism
\[
T_0 \to \operatorname{Proj} \bigoplus_{n \geq 0} H^0(T_0, \mathcal O_{T_0}(n \smat{1\\0})) = \operatorname{Proj} \mathbb C[x, uy, uz, u^2t, u^3\alpha, u^6\xi] = \mathbb P(1, 1, 1, 1, 3, 5)
\]
given by
\[
[u, x, y, z, \alpha, \xi, t] \mapsto [x, uy, uz, u^2t, u^3\alpha, u^6\xi],
\]
which is precisely the weighted blowup $T_0 \to \mathbb P(1, 1, 1, 1, 3, 5)$ given in \cref{eqn:pre 2-ray link example T0 to P}.

As in \cite[Section~2.1]{BZ10}, there are two projective morphisms of relative Picard number 1 from $T_0$ up to isomorphisms, corresponding to the ample models of divisors in the two edges of the nef cone of~$T_0$. The ample model of any divisor in the interior of the nef cone of $T_0$ gives an embedding of $T_0$ into a weighted projective space. The ample model of $\mathbb V(y) \in N^1(T_0)$ is given by
\[
\begin{aligned}
T_0 & \to \operatorname{Proj} \mathbb C[y, z, \alpha, u\xi, ut, x\xi, xt] \subseteq \mathbb P(1, 1, 3, 5, 1, 6, 2)\\
[u, x, y, z, \alpha, \xi, t] & \mapsto [y, z, \alpha, u\xi, ut, x\xi, xt].
\end{aligned}
\]
Denoting $\mathcal W_0 = \operatorname{Proj} \mathbb C[y, z, \alpha, u\xi, ut, x\xi, xt]$, we see that the morphism $T_0 \to \mathcal W_0$ contracts $\mathbb V(\xi, t)$ to the surface $\mathbb P(1, 1, 3) \subseteq \mathcal W_0$ and is an isomorphism elsewhere. The ample model of $\mathbb V(y) \in N^1(T_1)$ is given similarly by
\[
T_1 \to \operatorname{Proj} \mathbb C[y, z, \alpha, u\xi, ut, x\xi, xt] = \mathcal W_0,
\]
contracting $\mathbb V(u, x)$ to $\mathbb P(1, 1, 3)$. This induces a birational map $T_0 \dashrightarrow T_1$, a small $\mathbb Q$-factorial modification, given by
\[
[u, x, y, z, \alpha, \xi, t] \mapsto [u, x, y, z, \alpha, \xi, t].
\]
Note that this is the identity map on the affine space $\mathbb A^7$ but it is not an isomorphism between $T_0$ and $T_1$ since the irrelevant ideals are different. The diagram $T_0 \to \mathcal W_0 \gets T_1$ is a flop.

Note that multiplying the action-matrix of $T_0$ or $T_1$ with a matrix in $\operatorname{GL}(2, \mathbb Q)$ is equivalent to choosing a different basis for the group $(\mathbb C^*)^2$, so the geometric quotients $T_0$ and $T_1$ stay the same (see \cite[Lemma~2.4]{Ahm17}). If we multiply with a matrix with negative determinant, then we change the order of the rays in $N^1(T_0)$ from anti-clockwise to clockwise.

Similarly, there are only two projective morphisms of relative Picard number 1 from~$T_1$: the contraction $T_1 \to \mathcal W_0$ and the ample model of~$\mathbb V(\xi)$. We multiply the action-matrix of $T_1$ by the matrix $\smat{6 & -5\\2 & -1}$ with determinant~$4$ to find
\[
\begin{array}{cccccc|ccc}
            & u & x & y & z & \alpha & \xi &  t & \\
\Ta{T_1\colon} & 5 & 6 & 1 & 1 & 3 & 0 & -4 & \Tb{.}\\
            & 1 & 2 & 1 & 1 & 3 & 4 &  0 &
\end{array}
\]
The ample model of $\mathbb V(\xi)$ is given by
\[
\begin{aligned}
T_1 & \to \mathbb P(1, 1, 1, 2, 3, 4)\\
[u, x, y, z, \alpha, \xi, t] & \mapsto \mleft[ t^{\frac54} u, t^{\frac14} y, t^{\frac14} z, t^{\frac32} x, t^{\frac34} \alpha, \xi \mright].
\end{aligned}
\]
Note that this is a morphism of varieties despite having fractional powers (see~\cite{BB13}).

The 2-ray link that we have found for $\mathbb P(1, 1, 1, 1, 3, 5)$ is summarized by the diagram below.
\begin{equation*}
\begin{tikzcd}[column sep = small]
& \arrow[ld, ""'] T_0 \arrow[rd, ""'] \arrow[rr, dashed, ""] & & T_1 \arrow[ld, ""'] \arrow[rd, ""] \\
\mathbb P(1^4, 3, 5) & & \mathcal W_0 & & \mathbb P(1^3, 2, 3, 4)
\end{tikzcd}
\end{equation*}
\end{example}

For more examples on toric 2-ray links, see \cite[Section~4]{BZ10}.

\section{\texorpdfstring
{Constructing sextic double solids with a $cA_n$ singularity}
{Constructing sextic double solids with a cAn singularity}} \label{sec:constructing sds with cAn}

In this section, we give a bound $n \leq 8$ for an isolated $cA_n$ singularity on a sextic double solid, and we explicitly describe all sextic double solids that contain an isolated $cA_n$ singularity where $n \leq 8$. The main tool we use for this is the splitting lemma from singularity theory, first introduced in~\cite{Tho72}, which is used for separating the quadratic terms and the higher order terms of a power series.

\subsection{\texorpdfstring
{Splitting lemma from singularity theory}
{Splitting lemma from singularity theory}}

The splitting lemma below is taken from~\cite[Theorem~2.47]{GLS07}, with a slight modification in notation. Specifically, we write $v(x+p)$ instead of $x + g$, where $v$ is a unit in the power series ring and $p$ does not depend on $x$, as we use this form in \cref{sec:mod} for constructing birational models.

\begin{theorem}[Splitting lemma] \label{thm:con splitting}
Let $m$ be a positive integer and let $\bm y$ denote variables $(y_1, \ldots, y_m)$. Let $f \in \mathbb C\{x, \bm y\}$ be a convergent power series of multiplicity two, with degree two part of the form $x^2 + \emph{(terms in $\bm y$)}$. Then, there exist unique $v \in \mathbb C[[x, \bm y]]$ and $p, h \in \mathbb C[[\bm y]]$, where $v$ is a unit and the multiplicity of $p$ is at least two, such that
\[
f = (v(x + p))^2 + h.
\]
Moreover, the power series $h, p$ and $v$ are absolutely convergent around the origin, and the multiplicity of $h$ is at least two. It follows immediately that $f$ is right equivalent to $x^2 + h$.
\end{theorem}

\begin{proof}
It is proved in \cite[Theorem~2.47]{GLS07} that there exist unique $g \in \mathbb C[[x, \bm y]]$ and $h \in \mathbb C[[\bm y]]$, where the multiplicity of $g$ is at least two, such that $f = (x + g)^2 + h$. Moreover, it is proved that the power series $g$ and $h$ are absolutely convergent around the origin, and the multiplicity of $h$ is at least two.

By the Weierstrass preparation theorem (see \cite[Theorem~1.6]{GLS07}), there exists a unique unit $v \in \mathbb C\{x, \bm y\}$ and a unique $p \in \mathbb C\{\bm y\}$ such that $x + g = v(x + p)$.
\end{proof}

Below we give explicit recurrent formulas for $g, h, p, v$ of the splitting lemma in terms of the coefficients of~$f$.

\begin{proposition}[Explicit splitting lemma] \label{thm:con splitting explicit}
Below, we use the same notation as in the splitting lemma \cref{thm:con splitting} and its proof. Denote
\[
f = \sum_{i, d \geq 0} x^i f_{i, d}, \qquad
g = \sum_{i, d \geq 0} x^i g_{i, d}, \qquad
h = \sum_{d \geq 0} h_{d}, \qquad
p = \sum_{d \geq 0} p_{d}, \qquad
v = \sum_{i, d \geq 0} x^i v_{i, d}
\]
where $f_{i, d}, g_{i, d}, h_d, p_d, v_{i, d} \in \mathbb C[\bm y]$ are homogeneous of degree~$d$. Then,
\begin{align}
g_{1, 0} & = 0, \nonumber\\
g_{i, d} & = \frac{1}{2} \mleft(f_{i+1, d} -
  \sum_{k=0}^{d} \sum_{j = \max(0, 2-k)}^{\min(i+1, i+d-k-1)} g_{j, k} g_{i+1-j, d-k}
  \mright), \quad \text{if $(i, d) \neq (1, 0)$}, \label{eqn:con splitting g}\\
h_{d} & = f_{0, d} - \sum_{j=2}^{d-2} g_{0, j} g_{0, d-j}, \label{eqn:con splitting h}\\
p_d & = g_{0, d} - \sum_{j=2}^{d-1} v_{0, d-j} p_j, \label{eqn:con splitting p}\\
v_{0, 0} & = 1, \nonumber\\
v_{i, d} & = g_{i+1, d} - \sum_{j=2}^{d} \mleft( v_{i+1, d-j} p_j \mright), \quad \text{if $(i, d) \neq (0, 0)$.} \label{eqn:con splitting v}
\end{align}
\end{proposition}

\begin{proof}
Taking the degree $d$ part of the coefficient of $x^{i+1}$ in $f = (x + g)^2 + h$ where $i \geq 0$, we find \cref{eqn:con splitting g}. Taking all degree $d$ terms of $f = (x + g)^2 + h$ that are not divisible by~$x$, we find \cref{eqn:con splitting h}. Taking the degree $d$ part of the coefficient of $x^{i+1}$ in $x + g = v(x + p)$ where $i \geq 0$, we find \cref{eqn:con splitting v}, and taking all degree $d$ terms not divisible by~$x$, we find \cref{eqn:con splitting p}.
\end{proof}

\begin{example}
Using the notation of \cref{thm:con splitting explicit}, the first few homogeneous parts of $h$ are given in terms of coefficients of $f$ by
\begin{align*}
h_2 & = f_{0, 2}\\
h_3 & = f_{0, 3}\\
h_4 & = f_{0, 4} - \frac{f_{1, 2}^2}{4}\\
h_5 & = f_{0, 5} - \frac{f_{1, 2}^2 f_{2, 1}}{4} - \frac{f_{1, 2} f_{1, 3}}{2}\\
h_6 & = f_{0, 6} - \frac{f_{1, 2}^3 f_{3, 0}}{8} + \frac{f_{1, 2}^2 f_{2, 2}}{4} - \frac{f_{1, 2}^2 f_{2, 1}^2}{4} + \frac{f_{1, 2} f_{1, 3} f_{2, 1}}{2} - \frac{f_{1, 2} f_{1, 4}}{2} - \frac{f_{1, 3}^2}{4}.
\end{align*}
\end{example}

\subsection{Parameter spaces of sextic double solids}

We apply the explicit splitting lemma (\cref{thm:con splitting explicit}) to describe the equation of a sextic double solid~$X \subseteq \mathbb P(1, 1, 1, 1, 3)$ that has a singular point at~$P_x = [1, 0, 0, 0, 0]$.

\begin{notation} \label{nota:con}
Let $X$ be the subscheme of $\mathbb P(1, 1, 1, 1, 3)$, with variables $x, y, z, t, w$, defined by~$f$, where
\begin{equation} \label{eqn:con notation}
\begin{aligned}
f & = -w^2 + x^4 (t^2 + Q_2)\\
  & + x^3 (4 t^3 a_0 + 4 t^2 a_1 + 2 t a_2 + a_3)\\
  & + x^2 (2 t^4 b_0 + 2 t^3 b_1 + 2 t^2 b_2 + 2 t b_3 + b_4)\\
  & + x (2 t^5 c_0 + 2 t^4 c_1 + 2 t^3 c_2 + 2 t^2 c_3 + 2 t c_4 + c_5)\\
  & + t^6 d_0 + 2 t^5 d_1 + t^4 d_2 + 2 t^3 d_3 + t^2 d_4 + 2 t d_5 + d_6,
\end{aligned}
\end{equation}
where the polynomials $a_j, b_j, c_j, d_j \in \mathbb C[y, z]$ and $Q_j \in \mathbb C[y, z, t]$ are homogeneous of degree~$j$.

We define the following $11$ technical conditions, where $i \in \{1, 2, 3, 4\}$:
\begin{enumerate}
\item (This condition is always true).
\item $Q_2 = 0$.
\item Condition (2) holds and $a_3 = 0$.
\item Condition (3) holds and $b_4 = a_2^2$.
\item Condition (4) holds and $c_5 = 2 a_2 b_3 - 4 a_1 a_2^2$.
\item Condition (5) holds and $d_6 = 2 a_2 c_4 + b_3^2 - 8 a_1 a_2 b_3 - 2 a_2^2 b_2 + 4 a_0 a_2^3 + 16 a_1^2 a_2^2$.
\item[\phantomsection($7.i$)] \stepcounter{enumi}
Condition (6) holds and there exist
polynomials $q, r, s, e \in \mathbb C[y, z]$ that are respectively homogeneous of degrees $i-1$, $3-i$, $4-i$, $i+1$, where $0$ is considered to be the only polynomial homogeneous of degree $-1$, such that
\[
\begin{aligned}
a_2 & = q r\\
b_3 & = q s + 4 a_1 q r\\
c_4 & = 2 a_1 q s - 6 a_0 q^2 r^2 + 8 a_1^2 q r + e r\\
d_5 & = 2 b_2 q s - 8 a_1^2 q s - e s - b_1 q^2 r^2 + c_3 q r
\end{aligned}
\]
\item Condition~(7.1) holds and there exists a
constant $A_0 \in \mathbb C$ and a
polynomial $B_1 \in \mathbb C[y, z]$ homogeneous of degree~$1$ such that
\[
\begin{aligned}
e_2 & = 4 A_0 r_2 + b_2 - 6 a_1^2\\
c_3 & = 6 a_0 s_3 - 4 A_0 s_3 + 4 a_0 a_1 r_2 - 8 A_0 a_1 r_2 + B_1 r_2 + 2 a_1 b_2 - 4 a_1^3\\
d_4 & = -2 s_3 B_1 + 16 r_2^2 A_0^2 - 8 b_2 r_2 A_0 + 16 a_1^2 r_2 A_0 + 4 b_1 s_3\\
  & - 8 a_0 a_1 s_3 - 2 b_0 r_2^2 + 2 c_2 r_2 + b_2^2 - 4 a_1^2 b_2 + 4 a_1^4.
\end{aligned}
\]
\end{enumerate}

Note that zero is homogeneous of every non-negative degree, so for example in Condition~(7.1), the term $e$ can be zero.

Next, define the set of $11$ rational indices
\[
\mathrm{Inds} := \{1,\, 2,\, 3,\, 4,\, 5,\, 6,\, 7.1,\, 7.2,\, 7.3,\, 7.4,\, 8\}.
\]
Let $\lfloor k\rfloor$ denote the greatest integer not greater than~$k$.
For every $k \in \mathrm{Inds}$, let $R_k$ denote the \mbox{$\mathbb C$-a}lgebra freely generated by the coefficients of the polynomials
\begin{itemize}
\item $Q_2, a_i, b_i, c_i, d_i$ if $k \leq 6$,
\item $a_i, b_i, c_i, d_i, q, r, s, e$ if $k \in \{7.1,\, 7.2,\, 7.3,\, 7.4\}$, and
\item $a_i, b_i, c_i, d_i, q, r, s, e, A_0, B_1$ if $k = 8$,
\end{itemize}
where we consider the coefficients to be variables satisfying Condition~$(k)$. Define
\[
F_k = \operatorname{Spec} \left( R_k[x, y, z, t, w] / (f) \right),
\]
where $f  \in R_k[x, y, z, t, w]$ is the polynomial in \cref{eqn:con notation}. Let \emph{family $k$} denote the set of fibres of $F_k \to \operatorname{Spec} R_k$ over closed points.
We say that a \textbf{general} sextic double solid in family $k$ satisfies a property if the property is satisfied by all the fibres of $F_k \to \operatorname{Spec} R_k$ over the closed points of some Zariski open dense set in $\operatorname{Spec} R_k$.
We say that an \textbf{analytically very general} sextic double solid in family $k$ satisfies a property if there is a Zariski open dense subset $U$ of $\operatorname{Spec} R_k$ such that the property is satisfied by all the fibres of $F_k \to \operatorname{Spec} R_k$ over the closed points of $U$ that are in the complement of some countable union of closed analytic proper subsets.
\end{notation}

\begin{remark} \label{rem:con notation}
\begin{enumerate}[label=(\alph*), ref=\alph*]
\item The following are equivalent in \cref{nota:con}:
\begin{enumerate}[label=(\roman*), ref=\roman*]
\item $X$ is a sextic double solid,
\item $X$ is a variety,
\item $f$ is irreducible, and
\item $f + w^2$ is not the square of a polynomial in $\mathbb C[x, y, z, t]$.
\end{enumerate}
Note that if $(\mathbb V(f), \bm 0)$ is a $cA_n$ singularity for some~$n$, then $f$ is irreducible.

\item \label{ite:con notation closed point defines f} Every closed point of $\operatorname{Spec} R_k$ bijectively corresponds to a choice of complex coefficients of
\begin{itemize}
\item $Q_2, a_i, b_i, c_i, d_i$ if $k \leq 6$,
\item $a_i, b_i, c_i, d_i, q, r, s, e$ if $k \in \{7.1,\, 7.2,\, 7.3,\, 7.4\}$, and
\item $a_i, b_i, c_i, d_i, q, r, s, e, A_0, B_1$ if $k = 8$,
\end{itemize}
so determines a unique polynomial $f \in \mathbb C[x, y, z, t, w]$. For every closed point $P \in \operatorname{Spec} R_k$ such that $f$ is irreducible, the fibre of $F_k \to \operatorname{Spec} R_k$ over $P$ is a sextic double solid.

\item The varieties $\operatorname{Spec} R_k$ are affine spaces and their dimensions are given in \cref{tab:con param space dims}. The affine spaces $\operatorname{Spec} R_{7.1}$, $\operatorname{Spec} R_{7.2}$, $\operatorname{Spec} R_{7.3}$ and $\operatorname{Spec} R_{7.4}$ all have the same dimension.

\item \label{itm:con param space automorphisms} Let $k \in \mathrm{Inds}$ and let $f \in \mathbb C[x, y, z, t, w]$ in \cref{nota:con} satisfy Condition~($k$). The graded \mbox{$\mathbb C$-}algebra automorphisms $\sigma$ of $\mathbb C[x, y, z, t, w]$, which fix the point $P_x = [1, 0, 0, 0, 0]$ and take $f$ to another polynomial $\sigma(f)$ of the form in \cref{nota:con} satisfying Condition~($k$), are given by
\[
\pmat{x\\y\\z\\t\\w} \mapsto \pmat{
  \alpha & R_3 &\\
    & M_3 &\\
    &     & \pm 1
} \pmat{x\\y\\z\\t\\w}
\]
when $k = 1$, and given by
\[
\pmat{x\\y\\z\\t\\w} \mapsto \pmat{
  \alpha & R_2 & \beta\\
    & M_2 & C_2 &\\
    &     & \alpha^{-2} &\\
    &     &        & \pm 1
} \pmat{x\\y\\z\\t\\w}
\]
when $k \geq 2$, where $M_i \in \operatorname{GL}(i, \mathbb C)$ are matrices, $R_i \in \mathbb C^i$ are row-vectors, $C_2 \in \mathbb C^2$ is a column-vector and $\alpha \in \mathbb C^*, \beta \in \mathbb C$ are scalars. These automorphisms form an algebraic group which is of dimension $13$ if $k = 1$ and of dimension $10$ if $k \geq 2$.

If $k > 7$, then we also have the $\mathbb C^*$-action
\[
\lambda \cdot q := \lambda q, \quad
\lambda \cdot r := \lambda^{-1}r, \quad
\lambda \cdot s := \lambda^{-1}s, \quad
\lambda \cdot e := \lambda e
\]
which leaves $f$ invariant.

If a coarse moduli space of sextic double solids with an isolated $cA_{\lfloor k\rfloor}$ singularity exists, then we expect its dimension to differ from $\dim \operatorname{Spec} R_k$ by $13$ if $k = 1$, by $10$ if $2 \leq k \leq 6$ and by $11$ if $k > 7$. The moduli space of smooth sextic double solids has dimension~68.
\Cref{tab:con param space dims} shows the expected moduli space dimensions.

\item If $X$ has an isolated singularity at~$P_x$, then by using the $\mathbb C^*$-action described in \labelcref{itm:con param space automorphisms} for $k > 7$ and \cref{thm:con cA7 no common prime divisor}, we can set $q = 1$, $r = 1$ and $s = 1$ respectively for families 7.1, 7.3 and~7.4.
\end{enumerate}
\end{remark}

\begin{table}[ht]
\centering
\caption{Dimension of the space of sextic double solids with an isolated $cA_{\lfloor k\rfloor}$\label{tab:con param space dims}}
\begin{tabular}{ccccccccc}
\toprule
$k$                          &  1 &  2 &  3 &  4 &  5 &  6 & 7.1, 7.2, 7.3, 7.4 &  8\\
\midrule
$\dim \operatorname{Spec} R_k$       & 80 & 74 & 70 & 65 & 59 & 52 & 45 & 36\\
expected moduli space $\dim$ & 67 & 64 & 60 & 55 & 49 & 42 & 34 & 25\\
\bottomrule
\end{tabular}
\end{table}

We state the main theorem of this section, describing sextic double solids with an isolated $cA_n$ singularity.

\begin{maintheorem} \label{mai:con}
For every positive integer~$n$, both of the following hold:
\begin{enumerate}[label=(\alph*), ref=\alph*]
\item \label[(part)]{itm:con theo bound m leq 8} If a sextic double solid has an isolated $cA_n$ singularity, then $n \leq 8$.
\item \label[(part)]{itm:con theo every SDS is some X} Every sextic double solid with an isolated $cA_n$ singularity $P$ is isomorphic to a variety $X$ in \cref{nota:con} satisfying Condition~$(l)$ for some $l \in \mathrm{Inds}$ such that $\lfloor l\rfloor = n$, with the isomorphism sending $P$ to $P_x = [1, 0, 0, 0, 0]$.
\end{enumerate}
Furthermore, for every $k \in \mathrm{Inds}$, all of the following hold:
\begin{enumerate}[resume, label=(\alph*), ref=\alph*]
\item \label[(part)]{itm:con theo cAm} If $k \geq 2$, then every scheme $X$ in \cref{nota:con} satisfying Condition~$(k)$ has either a (possibly non-isolated) $cA_m$ singularity or the singularity $(\mathbb V(x_1^2 + x_2^2), \bm 0) \subseteq (\mathbb C^4, \bm 0)$ at~$P_x$, where $m \geq \lfloor k\rfloor$ and $\mathbb C^4$ has variables $x_1, x_2, x_3, x_4$.
\item \label[(part)]{itm:con theo general smooth} A general sextic double solid in family~$k$ is smooth outside a $cA_{\lfloor k\rfloor}$ singularity at~$P_x$.
\item\label[(part)]{itm:con very gen mfs} An analytically very general sextic double solid in family~$k$ is factorial, except for $k = 7.4$. No terminal variety in family~$7.4$ is $\mathbb Q$-factorial.
\end{enumerate}
\end{maintheorem}

\begin{remark}
\begin{enumerate}[label=(\alph*), ref=\alph*]
\item By \cref{pro:int restriction map on Picard groups}, all log terminal sextic double solids have Picard number~$1$. Therefore, by \cref{mai:con} \cref{itm:con theo general smooth,itm:con very gen mfs}, an analytically very general sextic double solid in each family $k \in \mathrm{Inds} \setminus \{7.4\}$ is a Mori fibre space over a point.
\item Let $k \in \mathrm{Inds} \setminus \{1, 8\}$. Let $X$ satisfy Condition~$(k)$ but not Condition~$(l)$ for any $l \in \mathrm{Inds}$ satisfying $\lfloor l\rfloor = \lfloor k\rfloor + 1$. The proof of \cref{mai:con} \cref{itm:con theo every SDS is some X} implies that if one of the following holds:
\begin{itemize}
\item $k < 6$,
\item $P_x$ is an isolated singularity, or
\item $\lfloor k\rfloor = 7$, $r$ and $s$ are coprime and $q$ and $e$ are coprime,
\end{itemize}
then $X$ has a $cA_{\lfloor k\rfloor}$ singularity at~$P_x$.
\end{enumerate}
\end{remark}

\subsection{\texorpdfstring
{Bound $n \leq 8$ for an isolated $cA_n$ singularity}
{Bound n <= 8 for an isolated cAn singularity}} \label{sec:bounding n}

In this section, we prove \cref{itm:con theo bound m leq 8,itm:con theo cAm} of \cref{mai:con}, showing that the parameter spaces in \cref{nota:con} describe sextic double solids with a $cA_n$ singularity. In addition, we prove \cref{itm:con theo every SDS is some X} of \cref{mai:con}, namely the bound $n \leq 8$ for an isolated $cA_n$ singularity. The bound $n \leq 8$ for an isolated $cA_n$ singularity is proved by explicitly describing a curve of singularities for $n > 9$.

First we state a few lemmas needed for the proof.

\begin{lemma} \label{thm:con family (7.5) not terminal}
If $X$ in \cref{nota:con} satisfies Condition~$(6)$ and $P_x$ is an isolated singularity of~$X$, then $a_2 \neq 0$ or $b_3 \neq 0$.
\end{lemma}

\begin{proof}
If Condition~$(6)$ holds and $a_2 = b_3 = 0$, then $a_3 = b_4 = c_5 = d_6 = 0$. Let $C$ be the curve defined by the ideal $(t, w, 2xc_4 + 2d_5)$. Note that $C$ contains~$P_x$. Taking partial derivatives, we see that every point of $C$ is a singular point of~$X$.
\end{proof}

The following \lcnamecref{thm:con cA7 no common prime divisor} is useful when using \cref{nota:con}:

\begin{proposition} \label{thm:con cA7 no common prime divisor}
If $X$ in \cref{nota:con} satisfies Condition~$(k)$ and $P_x$ is an isolated singularity of~$X$ where $k > 7$, then $q$ and $e$ are coprime and $r$ and $s$ are coprime as polynomials in $\mathbb C[y, z]$.
\end{proposition}

\begin{proof}
Let $D \in \mathbb C[y, z]$ be a common prime divisor of $r$ and~$s$ or a common prime divisor of $q$ and~$e$. Then $D$ divides $a_2, b_3, c_4, d_5$, and $D^2$ divides $a_3, b_4, c_5, d_6$. Let $C$ be the curve defined by the ideal $(D, t, w)$. Note that $C$ contains~$P_x$. Taking partial derivatives, we see that $X$ is singular at every point of~$C$
\end{proof}

\begin{lemma} \label{thm:con solve equations key}
Let $r, s \in \mathbb C[y, z]$ have no common prime divisors, and let $q \in \mathbb C[y, z]$ be non-zero. Let $h_n \in \mathbb C[y, z]$ be of the form $h_n = q^\alpha (r^\beta C_r - s^\gamma C_s)$ where $C_r, C_s \in \mathbb C[y, z]$ and $\alpha, \beta, \gamma$ are non-negative integers. Then
\[
h_n = 0 \iff \text{there exists $C \in \mathbb C[y, z]$ such that $C_r = s^\gamma C$ and $C_s = r^\beta C$}.
\]
\end{lemma}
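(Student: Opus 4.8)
The backward implication needs no work: if $C_r = s^γ C$ and $C_s = r^β C$ for some $C$, then $r^β C_r - s^γ C_s = r^β s^γ C - s^γ r^β C = 0$, so $h_n = q^α · 0 = 0$. The content is entirely in the forward direction.

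For the forward implication the plan is to work in the unique factorization domain $ℂ[y, z]$ and reduce the vanishing of $h_n$ to a divisibility statement. Assuming $h_n = 0$, I would first observe that $q ≠ 0$ and $ℂ[y,z]$ is an integral domain, so $q^α ≠ 0$ and we may cancel it to obtain $r^β C_r = s^γ C_s$. The crux is to show $r^β \mid C_s$: since $r$ and $s$ share no common prime factor, neither do $r^β$ and $s^γ$, so their greatest common divisor is a unit; applying the UFD form of Euclid's lemma to the divisibility $r^β \mid s^γ C_s$ then yields $r^β \mid C_s$. Writing $C_s = r^β C$ and substituting back gives $r^β C_r = s^γ r^β C$, and cancelling $r^β$ in the domain produces $C_r = s^γ C$, which is exactly the pair of identities asserted.

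The step I expect to need the most care is not this core argument but the degenerate configurations, where the naive cancellation or the coprimality claim is not literally meaningful and must be checked by hand. If $β = 0$ then $r^β = 1$ and one simply takes $C = C_s$, reading off both identities directly (and symmetrically $C = C_r$ when $γ = 0$). If $r = 0$, the hypothesis that $r$ and $s$ share no prime factor forces $s$ to be a nonzero constant, so $s^γ$ is a unit and the two target identities can be solved explicitly; the case $s = 0$ is symmetric. Away from all of these degeneracies the elements $q^α$, $r^β$, $s^γ$ are nonzero and coprime as needed, so the cancellations are legitimate and the UFD argument above applies verbatim. I would organize the proof as the one-line backward direction, the main UFD computation, and a short case split isolating $β = 0$, $γ = 0$, $r = 0$, and $s = 0$.
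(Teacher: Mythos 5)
Your proof is correct, and there is no argument in the paper to compare it with: the paper states \cref{thm:con solve equations key} without proof, treating it as a standard consequence of unique factorization in $\mathbb C[y, z]$. Your route (cancel $q^\alpha$ in the integral domain, apply Euclid's lemma for coprime elements of a UFD to get $r^\beta \mid C_s$, then cancel $r^\beta$) is precisely the standard argument the authors take for granted, and your one-line backward direction matches what anyone would write. Your separate treatment of the degenerate cases is not idle bookkeeping: the configuration $r = 0$, $s = 1$ actually occurs where the paper applies the lemma, namely for family~\labelcref{itm:con condition 7.4} of \cref{nota:con}, so the observation that $r = 0$ forces $s$ to be a nonzero constant (hence $s^\gamma$ a unit) is genuinely needed for the lemma as stated to cover all of the paper's uses.
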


\begin{proof}
Obvious.
\end{proof}

\newcommand\condNine{
\[
\begin{aligned}
A_0 & = a_0\\
B_1 & = b_1\\
d_3 & = -s_3 B_0 + 2 b_0 s_3 - 2 a_0^2 s_3 + c_1 r_2 - 4 a_0 b_1 r_2\\
  & + 16 a_0^2 a_1 r_2 + b_1 b_2 - 4 a_0 a_1 b_2 - 2 a_1^2 b_1 + 8 a_0 a_1^3\\
c_2 & = r_2 B_0 - 6 a_0^2 r_2 + 2 a_0 b_2 + 2 a_1 b_1 - 12 a_0 a_1^2.
\end{aligned}
\]
}

\begin{proof}[\textnormal{\textbf{Proof of \cref{mai:con} \cref{itm:con theo every SDS is some X}}}]
First, we prove that every sextic double solid $Y \subseteq \mathbb P(1, 1, 1, 1, 3)$ with a singular point $P$ (not necessarily of type~$cA_n$) is isomorphic to some $X$ in \cref{nota:con}, with the isomorphism sending $P$ to $P_x = [1, 0, 0, 0, 0]$. For this, it suffices to note that \cref{nota:con} describes all sextic double solids with a singular point at $P_x$, and that we can move any point of $Y$ to $P_x$ using an automorphism of $\mathbb P(1, 1, 1, 1, 3)$. This proves the case $n = 1$. For the rest of the proof, $X$ is given by some $f$ in \cref{nota:con}  with a (not necessarily isolated) $cA_n$ singularity at $P_x$ and $n$ is at least~$2$.

Let $X^{\mathrm{an}}$ denote the analytification of~$X$. By \cref{thm:ana stable equivalence,thm:ana lowest degree parts,thm:pre cAn equation}, after applying a suitable linear invertible coordinate change on $y, z, t$, Condition~(2) holds. This proves the case $n = 2$. For the rest of the proof, Condition~(2) holds and $n$ is at least~$3$.

Let
\[
X_x = \operatorname{Spec} \mleft(\mathbb C[y, z, t, w] / (f(1, y, z, t, w))\mright)
\]
denote the affine open of $X$ given by inverting~$x$. Let $g \in \mathbb C\{y, z, t\}$ and $h \in \mathbb C\{y, z\}$ be the unique convergent power series of multiplicity at least 2 such that
\[
f(1, y, z, t, w) = -w^2 + (t + g)^2 + h.
\]
Since by assumption $(X^{\mathrm{an}}, P_x)$ is a $cA_n$ singularity, \cref{thm:ana stable equivalence,thm:ana lowest degree parts,thm:pre cAn equation} imply that $h_2 = \ldots = h_n = 0$, where $h_j \in \mathbb C[x_3, x_4]$ is the homogeneous degree $j$ part of~$h$.

Using the explicit splitting lemma (\cref{thm:con splitting explicit}), it is straightforward to compute that $h_2 = \ldots = h_n = 0$ is equivalent to satisfying Condition~$(n)$ when $n \leq 6$, even if $P_x$ is not an isolated singularity. This proves the cases $n \in \{3, \ldots, 6\}$. For the rest of the proof, Condition~(6) holds, $(X^{\mathrm{an}}, P_x)$ is an isolated $cA_n$ singularity and $n$ is at least~$7$.

By \cref{thm:con family (7.5) not terminal}, $a_2 \neq 0$ or $b_3 \neq 0$. Define $q$ to be a homogeneous greatest common divisor of $a_2$ and~$b_3$. Define $r$ and $s \in \mathbb C[y, z]$ to be the unique homogeneous polynomials such that
\[
\begin{aligned}
  a_2 & = q r\\
  b_3 & = q s + 4 a_1 q r.
\end{aligned}
\]
Then $r$ and $s$ are coprime. Using the explicit splitting lemma (\cref{thm:con splitting explicit}), we compute
that
\[
h_7 = q (r (-12 a_0 q^2 r s+4 b_2 q s-2 b_1 q^2 r^2+2 c_3 q r-2 d_5) - s (2 c_4-4 a_1 q s)).
\]
Using \cref{thm:con solve equations key}, the equations $h_2 = \ldots = h_7 = 0$ imply the existence of a polynomial $e \in \mathbb C[y, z]$ such that
\[
\begin{aligned}
c_4 & = 2 a_1 q s - 6 a_0 q^2 r^2 + 8 a_1^2 q r + e r\\*
d_5 & = 2 b_2 q s - 8 a_1^2 q s - e s - b_1 q^2 r^2 + c_3 q r.
\end{aligned}
\]
Therefore, $h_2 = \ldots = h_7 = 0$ implies Condition~$(7.i)$, where $i$ is defined by
\[
i := \deg \gcd(a_2, b_3) + 1,
\]
where $\deg \gcd(a_2, b_3)$ is the degree of a greatest common divisor of $a_2 \in \mathbb C[y, z]$ and $b_3 \in \mathbb C[y, z]$. This proves $n = 7$.

Next, we show that if $h_2 = \ldots = h_8 = 0$ and one of Conditions $(7.2)$, $(7.3)$ and $(7.4)$ holds, then $r$ and $s$ have a common prime divisor or $q$ and $e$ have a common prime divisor, which contradicts \cref{thm:con cA7 no common prime divisor}. In Condition~$(7.2)$, we calculate that $h_8 + e^2 r^2$
is divisible by~$q$, giving $r = C q$ for some $C \in \mathbb C$. Substituting into $h_8$, we compute that $h_8 - 2 q e s^2$
is divisible by~$q^2$. Therefore $q$ and $s$ have a common prime divisor, giving that $r$ and $s$ have a common prime divisor, a contradiction. Conditions $(7.3)$ and $(7.4)$ are similar.

Hence, if $h_2 = \ldots = h_8 = 0$, then Condition $(7.1)$ holds. Using the explicit splitting lemma we calculate~$h_8$
and using \cref{thm:con solve equations key} we can show that $h_2 = \ldots = h_8 = 0$ implies Condition~$(8)$.
\end{proof}

\begin{proof}[\textnormal{\textbf{Proof of \cref{mai:con} part \labelcref{itm:con theo bound m leq 8}}}]
Assume that $X$ is a sextic double solid with an isolated $cA_n$ singularity where $n \geq 9$. Using the notation in the proof of \cref{mai:con} \cref{itm:con theo every SDS is some X}, we find that Condition~(8) holds and $h_2 = \ldots = h_9 = 0$. Using the explicit splitting lemma we compute~$h_9$
and using \cref{thm:con solve equations key} we find that there exists $B_0 \in \mathbb C$ such that
\condNine
Substituting into $f$ gives
\[
\begin{aligned}
x^3 a_3 + x^2 b_4 + x c_5 + d_6 & = (s_3 + 2 a_1 r_2 + x r_2)^2\\
x^3 a_2 + x^2 b_3 + x c_4 + d_5 & = (s_3 + 2 a_1 r_2 + x r_2) (-2 a_0 r_2 + b_2 - 2 a_1^2 + 2 x a_1 + x^2).
\end{aligned}
\]
Define the curve $C$ by the ideal $(w, t, s_3 + 2 a_1 r_2 + x r_2)$.
Taking partial derivatives, we find that $X$ is singular at every point of~$C$, a contradiction.
\end{proof}

\begin{proof}[\textnormal{\textbf{Proof of \cref{mai:con} \cref{itm:con theo cAm}}}]
Let $X^{\mathrm{an}}$ denote the analytification of~$X$. Using the explicit splitting lemma (\cref{thm:con splitting explicit}), we can compute that the complex space germ $(X^{\mathrm{an}}, P_x)$ is isomorphic to $(\mathbb V(-w^2 + t^2 + h))$, where $h \in \mathbb C[y, z]$ is zero or has multiplicity at least $\lfloor k\rfloor + 1$. By \cref{thm:ana stable equivalence,thm:ana lowest degree parts,thm:pre cAn equation}, $X$ has either a (possibly non-isolated) $cA_m$ singularity or the singularity $(\mathbb V(x_1^2 + x_2^2), \bm 0) \subseteq (\mathbb C^4, \bm 0)$ at~$P_x$, where $m \geq \lfloor k\rfloor$ and $\mathbb C^4$ has variables $x_1, x_2, x_3, x_4$.
\end{proof}

\subsection{\texorpdfstring
{Smoothness outside the isolated $cA_n$ point}
{Smoothness outside the isolated cAn point}}

In this section, we prove \cref{mai:con} \cref{itm:con theo general smooth} using dimension count arguments, showing that a general sextic double solid with an isolated $cA_n$ singularity is smooth outside the $cA_n$ point.

\begin{lemma} \label{thm:con X general t non-zero}
For every $k \in \mathrm{Inds}$, a general member of family $k$ in \cref{nota:con} is smooth at every point with $t$-coordinate non-zero.
\end{lemma}

\begin{proof}
Let $\widehat{\mathcal A}_k$ denote the set of closed points $Q$ of $\operatorname{Spec} R_k$ such that the fibre of $F_k \to \operatorname{Spec} R_k$ over $Q$ has a singular point at $P_t = [0, 0, 0, 1, 0]$. We find
\[
f(P_t) = d_0, \quad \frac{\partial f}{\partial x}(P_t) = 2 c_0, \quad \frac{\partial f}{\partial y}(P_t) = 2 \frac{\partial d_1}{\partial y}, \quad \frac{\partial f}{\partial z}(P_t) = 2 \frac{\partial d_1}{\partial z}, \quad \frac{\partial f}{\partial t}(P_t) = 6 d_0.
\]
By the Jacobian criterion (\cite[Exercise~4.2.10]{Liu06}), $\widehat{\mathcal A}_k$ is the set of closed points of
\[
\mathcal A_k = \mathbb V_{\operatorname{Spec} R_k}\mleft(d_0, c_0, \frac{\partial d_1}{\partial y}, \frac{\partial d_1}{\partial z}\mright).
\]
We see that $\dim \mathcal A_k = \dim \operatorname{Spec} R_k - 4$.

The $\mathbb C$-algebra automorphism $x \mapsto x + \alpha_x t$, $y \mapsto y + \alpha_y t$, $z \mapsto z + \alpha_z t$ of $\mathbb C[x, y, z, t, w]$ defines a morphism
\[
\pi_{\mathcal A_k}\colon \operatorname{Spec} \mathcal A_k \times \operatorname{Spec} \mathbb C[\alpha_x, \alpha_y, \alpha_z] \to \operatorname{Spec} R_k
\]
with closed image. The set of closed points $Q$ of $\operatorname{Spec} R_k$, where the fibre of $F_k \to \operatorname{Spec} R_k$ over $Q$ has a singular point with $t$-coordinate non-zero, is precisely the set of closed points of the image of~$\pi_{\mathcal A_k}$. The image of $\pi_{\mathcal A_k}$ has codimension at least~$1$.
\end{proof}

\begin{lemma} \label{thm:con X general t zero}
For every $k \in \mathrm{Inds}$, a general member of family $k$ in \cref{nota:con} is smooth at every point different from $P_x$ that has $t$-coordinate zero.
\end{lemma}

\begin{proof}
Let $P = [0, \beta, \gamma, 0, 0] \in \mathbb P(1, 1, 1, 1, 3)$, where $(\beta, \gamma) \in \mathbb C^2 \setminus \{(0, 0)\}$. We find
\[
f(P) = d_6(P),\ \frac{\partial f}{\partial x}(P) = c_5(P),\ \frac{\partial f}{\partial y}(P) = \frac{\partial d_6}{\partial y}(P),\ \frac{\partial f}{\partial z}(P) = \frac{\partial d_6}{\partial z}(P),\ \frac{\partial f}{\partial t}(P) = 2 d_5(P).
\]
Define the linear polynomial $l = \gamma y - \beta z$. By the Jacobian criterion (\cite[Exercise~4.2.10]{Liu06}), $P$ is a singular point of $X$ if and only if the following divisibility constraint is satisfied:
\begin{equation} \label[divi]{eqn:con divisibility constraint}
\text{$l$ divides $c_5$ and $d_5$ and $l^2$ divides~$d_6$.}
\end{equation}
The set of closed points $Q \in \operatorname{Spec} R_k$, where the fibre of $F_k \to \operatorname{Spec} R_k$ over $Q$ is singular at $P$ for some~$(\beta, \gamma) \in \mathbb C^2 \setminus \{(0, 0)\}$, coincides with the set of closed points of a closed subset $\mathcal B_k$ of $\operatorname{Spec} R_k$. We show that $\dim \mathcal B_k$ is at most $\dim \operatorname{Spec} R_k - 2$.

\begin{itemize}
\item If $k \leq 4$, then the $19$ coefficients of $c_5, d_5$ and $d_6$ are algebraically independent in~$R_k$. By \cref{eqn:con divisibility constraint}, $\dim \mathcal B_k = \dim \operatorname{Spec} R_k - 3$.

\item If $k = 5$, then the $20$ coefficients of $a_2, b_3, d_5$ and $d_6$ are algebraically independent in~$R_k$. We have $c_5 = a_2 (2 b_3 - 4 a_1 a_2)$.
If $l$ divides $c_5$, then $l$ divides $a_2$ or $l$ divides $b_3 - 2 a_1 a_2$.
By \cref{eqn:con divisibility constraint}, in both cases we have $3$ less degrees of freedom. More formally, $\mathcal B_k$ is the union of the images of two morphisms, both having codimension exactly $3$ in $\operatorname{Spec} R_k$. Therefore, $\dim \mathcal B_k = \dim \operatorname{Spec} R_k - 3$.

\item If $k = 6$, then the $23$ coefficients of $a_2, b_3, c_4, d_5$ are algebraically independent in~$R_k$. We have $c_5 = a_2 (2 b_3 - 4 a_1 a_2)$
and $d_6 = a_2 \cdot (2 c_4 + G) + b_3^2$ for a polynomial $G \in \mathbb C[y, z]$ homogeneous of degree~$4$ which does not contain~$c_4$.

If $l$ divides $a_2$, then using the divisibility constraint~\labelcref{eqn:con divisibility constraint}, we find that $l$ divides~$b_3$. Now, $l^2$ divides $a_2$ or $l$ divides $2 c_4 + G$. So, there are three less degrees of freedom in choosing $a_2$, $b_3$, $c_4$ and~$d_5$.

If $l$ does not divide $a_2$, then $l$ divides $b_3 - 2 a_1 a_2$,
so $b_3 = 2 a_1 a_2 + Q l$
for some homogeneous quadratic form $Q \in \mathbb C[y, z]$. From $l \mid d_6$, we find that $l$ divides $c_4 - a_2 b_2 + 2 a_0 a_2^2 + 2 a_1^2 a_2$,
so $c_4 = C l + a_2 b_2 - 2 a_0 a_2^2 - 2 a_1^2 a_2$
for some homogeneous cubic form $C \in \mathbb C[y, z]$. From $l^2 \mid d_6$, we find that $l$ divides $C - 4 Q a_1$.
Therefore, after fixing $a_0, a_1, a_2$ and~$b_2$, there are at least two less degrees of freedom in choosing $b_3$, $c_4$ and~$d_5$.

In both cases, we see that $\dim \mathcal B_k \leq \dim \operatorname{Spec} R_k - 2$.

\item If $\lfloor k\rfloor = 7$, then
\[
\begin{aligned}
c_5 & = 4 q^2 r (2 s + a_1 r)\\
d_5 & = -e s + q (2 b_2 s - a_1^2 s - 4 b_1 q r^2 + c_3 r)\\
d_6 & = 4 q (e r^2 + q (s^2 + a_1 r s - 8 a_0 q r^3 - b_2 r^2 + a_1^2 r^2)).
\end{aligned}
\]
Let us consider $f$ for a closed point in $\mathcal B_k$. If $l \mid q$, then since $q$ and $e$ are coprime, we have $l \mid r$ and $l \mid s$, a contradiction. If $l \mid r$, then since $l \mid d_6$, we find $l \mid s$, a contradiction. Therefore, $l$ divides neither $q$ nor~$r$.

So, $l$ divides $2 s + a_1 r$.
Using $l^2 \mid d_6$, we see that $l^2$ divides $-32 a_0 q^2 r - 4 b_2 q + 3 a_1^2 q + 4 e$.
After fixing $a_0, a_1, b_2, q$ and $r$, we see that there are at least two less degrees of freedom in choosing $s$ and~$e$. So, we have $\dim \mathcal B_k \leq \dim \operatorname{Spec} R_k - 2$.

\item If $k = 8$, then
\[
\begin{aligned}
c_5 & = 2 r_2 (s_3 + 2 a_1 r_2)\\
d_5 & = r_2 (r_2 B_1 - 8 s_3 A_0 - 8 a_1 r_2 A_0 + 6 a_0 s_3 - b_1 r_2 + 4 a_0 a_1 r_2 + 2 a_1 b_2 - 4 a_1^3)\\
  & + s_3 (b_2 - 2 a_1^2)\\
d_6 & = r_2 (8 r_2^2 A_0 + 4 a_1 s_3 - 8 a_0 r_2^2 + 4 a_1^2 r_2) + s_3^2.
\end{aligned}
\]
We consider $f$ for a closed point in~$\mathcal B_k$. If $l \mid r_2$, then $l \mid s_3$, a contradiction. So, $l$ divides $s_3 + 2 a_1 r_2$.
Since $l$ divides $d_6$, we have $l \mid r_2^3 (A_0 - a_0)$.
So, $A_0 = a_0$. Since $l$ divides $d_5$, we see that $l \mid r_2^2 (B_1 - b_1)$.
We find that the coefficients of $f$ have at least two less degrees of freedom, namely $A_0 = a_0$, and the polynomials $B_1 - b_1$ and $s_3 + 2 a_1 r_2$
have a common prime divisor. So, we have $\dim \mathcal B_k \leq \dim \operatorname{Spec} R_k - 2$.
\end{itemize}
The $\mathbb C$-algebra automorphism $x \mapsto x + \alpha y$ of $\mathbb C[x, y, z, t, w]$ defines a morphism
\[
\pi_{\mathcal B_k, 1}\colon \operatorname{Spec} \mathcal B_k \times \operatorname{Spec} \mathbb C[\alpha] \to \operatorname{Spec} R_k
\]
and the $\mathbb C$-algebra automorphism $x \mapsto x + \alpha z$ of $\mathbb C[x, y, z, t, w]$ defines a morphism
\[
\pi_{\mathcal B_k, 2}\colon \operatorname{Spec} \mathcal B_k \times \operatorname{Spec} \mathbb C[\alpha] \to \operatorname{Spec} R_k.
\]
Every closed point $Q$ of $\operatorname{Spec} R_k$, where the fibre of $F_k \to \operatorname{Spec} R_k$ over $Q$ has a singular point different from $P_x$ with $t$-coordinate zero, belongs to the image of $\pi_{\mathcal B_k, 1}$ or~$\pi_{\mathcal B_k, 2}$. The union of the images of $\pi_{\mathcal B_k, 1}$ and $\pi_{\mathcal B_k, 2}$ has codimension at least~$1$.
\end{proof}

\begin{proof}[\textnormal{\textbf{Proof of \cref{mai:con} part \labelcref{itm:con theo general smooth}}}]
It follows from \cref{thm:con X general t non-zero,thm:con X general t zero} that a general sextic double solid in family $k$ has exactly one singular point, namely the point~$P_x$. The singularity of $X$ at $P_x$ is of type $cA_{\lfloor k\rfloor}$ if the homogeneous part $h_{\lfloor k\rfloor + 1} \in \mathbb C[y, z]$ of $h$ is non-zero, where $h$ is as in the proof of \cref{mai:con} \cref{itm:con theo every SDS is some X}. Since this is an open condition, a general sextic double solid in family $k$ has a $cA_{\lfloor k\rfloor}$ singularity at $P_x$.
\end{proof}

\subsection{Factoriality}

\begin{lemma}[{\cite[Lemma~5.1]{Kaw88}}] \label{thm:con fact iff Q-fact}
A terminal Gorenstein Fano 3-fold is factorial if and only if it is $\mathbb Q$-factorial.
\end{lemma}

\begin{lemma} \label{thm:con no Q-fact term members in 7-4}
There are no $\mathbb Q$-factorial log terminal sextic double solids in family~$7.4$.
\end{lemma}

\begin{proof}
Let $X$ be a log terminal variety in family~7.4. The Cartier divisor $\mathbb V_X(t)$ is the sum of the two prime divisors $D_1 = \mathbb V(t, q - w)$ and $D_2 = \mathbb V(t, q + w)$. Let $l \in \mathbb C[y, z]$ be a non-zero linear form that does not divide~$q$. Define the curve $C = V(q + w, x, l)$. If $D_1$ is $\mathbb Q$-Cartier, then $D_1 \cdot C = 0$, which contradicts \cref{pro:int restriction map on Picard groups,thm:int non-Cartier}. Therefore, neither $D_1$ nor $D_2$ is $\mathbb Q$-Cartier.
\end{proof}

Our proof of factoriality relies on the following corollary of \cref{thm:int Nam97 Pic}:

\begin{corollary} \label{thm:con Nam97 Cl}
Let $X$ be a Gorenstein terminal Fano 3-fold which is smooth along its general effective anti-canonical divisor~$D$. Then the natural homomorphism $\operatorname{Cl}(X) \to \operatorname{Pic}(D)$ from the class group of $X$ is injective.
\end{corollary}

\begin{proof}
Let $U$ be any Zariski open set in the smooth locus of~$X$ that contains~$D$. By \cref{rem:int Nam97 typos}\labelcref{itm:con injection from Pic X to Pic U}, we have an isomorphism of class groups $\operatorname{Cl}(X) \cong \operatorname{Cl}(U)$. Since $U$ is smooth, we have an isomorphism $\operatorname{Cl}(U) \cong \operatorname{Pic}(U)$. It follows from the proof of \cite[Proposition~2]{Nam97} that we can choose a small enough $U$ such that $\operatorname{Pic}(U)$ injects into $\operatorname{Pic}(D)$.
\end{proof}

\begin{corollary} \label{thm:con smooth along D with Pic 1 implies factorial}
Let $X$ be a terminal Gorenstein Fano 3-fold and $D$ a smooth effective \mbox{anti-canonical} divisor such that $X$ is smooth along~$D$ and $D$ has Picard number~$1$. Then $X$ is factorial.
\end{corollary}

\begin{proof}
By adjunction, every smooth anti-canonical divisor of a Fano variety is a K3 surface.
A very general projective K3 surface has Picard number~$1$.
Therefore, $X$ is smooth along an analytically very general anti-canonical divisor with Picard number~$1$.
By \cref{thm:con Nam97 Cl}, $X$ is $\mathbb Q$-factorial.
By \cref{thm:con fact iff Q-fact}, $X$ is factorial.
\end{proof}

\begin{lemma} \label{thm:con very gen SDS has Pic(V(x)) isom to Z except 7-4}
For every $k \in \mathrm{Inds} \setminus \{7.4\}$ and for an analytically very general sextic double solid $X$ in family~$k$, the subvariety $\mathbb V_X(x)$ is smooth and has Picard number~$1$.
\end{lemma}

\begin{proof}
Let $S_k$ be the $\mathbb C$-algebra freely generated by the $28$ coefficients, considered as variables, of polynomials $g \in \mathbb C[y, z, t]$ homogeneous of degree~$6$.
By \cref{rem:con notation}\labelcref{ite:con notation closed point defines f}, closed points $P$ of $\operatorname{Spec} R_k$ bijectively correspond to polynomials $f_P \in \mathbb C[x, y, z, t, w]$ in \cref{nota:con}. Let $\theta\colon \mathbb C[x, y, z, t, w] \to \mathbb C[y, z, t]$ be the homomorphism $x \mapsto 0$, $w \mapsto 0$. Let $\pi_k\colon \operatorname{Spec} R_k \to \operatorname{Spec} S_k$ be the morphism of affine spaces given on closed points by $f_P \mapsto \theta(f_P)$. The $\mathbb C$-algebra automorphisms $t \mapsto \alpha y + \beta z + t$ of $\mathbb C[y, z, t]$ induce a morphism $\tau\colon \operatorname{Spec} S_k \times \mathbb A^2 \to \operatorname{Spec} S_k$. Define $\rho_k$ to be the composition
\[
\rho_k := \tau \circ (\pi_k \times \mathrm{id}_{\mathbb A^2})\colon \operatorname{Spec} R_k \times \mathbb A^2 \to \operatorname{Spec} S_k.
\]
We can compute that the rank of the Jacobian matrix of $\rho_k$ at some specified point is~$28$ for all $k \in \mathrm{Inds} \setminus \{7.4\}$. It follows that $\rho_k$ is a dominant morphism of affine spaces for all $k \in \mathrm{Inds} \setminus \{7.4\}$.

The closed points $Q$ of $\operatorname{Spec} S_k$ bijectively correspond to polynomials $g_Q \in \mathbb C[y, z, t]$ homogeneous of degree~$6$, and therefore also to subschemes $Z_Q$ of $\mathbb P(1, 1, 1, 3)$ with variables $y, z, t, w$ given by $-w^2 + g_Q$. Smooth schemes $Z_Q \subseteq \mathbb P(1, 1, 1, 3)$ are K3 surfaces that are called \emph{sextic double planes}. It is known that a very general projective K3 surface has Picard number~$1$.
It follows that an analytically very general sextic double solid $X$ in family $k \in \mathrm{Inds} \setminus \{7.4\}$ satisfies that $\mathbb V_X(x)$ has Picard number~$1$.
\end{proof}

\begin{proof}[\textnormal{\textbf{Proof of \cref{mai:con} part \labelcref{itm:con very gen mfs}}}]
By \cref{mai:con} part~\labelcref{itm:con theo general smooth}, a general sextic double solid in family $k$ is terminal and is smooth along the anti-canonical divisor $\mathbb V(x)$. By \cref{thm:con smooth along D with Pic 1 implies factorial,thm:con very gen SDS has Pic(V(x)) isom to Z except 7-4}, an analytically very general sextic double solid in family $k \neq 7.4$ is factorial.
\end{proof}

\begin{remark}
In some cases we can prove that it suffices if the sextic double solid is only \emph{general} in \cref{mai:con} \cref{itm:con very gen mfs} as opposed to \emph{analytically very general}:
\begin{enumerate}[label=(\alph*), ref=\alph*]
\item A general sextic double solid in family~$1$ has only one singularity and that singularity is an ordinary double point. Every sextic double solid which is smooth outside an ordinary double point is factorial and has Picard number~$1$, see \cite[Theorem~B]{CP10}.
\item A general sextic double solid in family~$4$ is factorial, since in \cref{subsec:mod cA4} we construct a Sarkisov link to a complete intersection $Z_{5, 6} \subseteq \mathbb P(1, 1, 1, 2, 3, 4)$ which is $\mathbb Q$-factorial if it is general.
\end{enumerate}
\end{remark}

\subsection{\texorpdfstring
{Other $cA_n$ singularities}
{Other cAn singularities}}

Although the primary interest is in isolated $cA_n$ singularities since these are terminal, it is also possible to study non-isolated singularities with the same methods.

We describe uncountably many examples of sextic double solids with a non-isolated $cA_n$ singularity for all $9 \leq n \leq 11$.

\begin{proposition} \label{thm:con cA11 examples}
Let $8 \leq n \leq 11$. Let $r_2$ and $s_3$ be coprime and let $q_0$ be non-zero. Let $X$ in \cref{nota:con} satisfy Condition~$(n)$ but not satisfy Condition~$(n+1)$, where Conditions (9)--(12) are defined below:
\begin{enumerate}
\setcounter{enumi}{8}
\item Condition~$(8)$ of \cref{nota:con} is satisfied and there exists $B_0 \in \mathbb C$ such that \condNine
\item Condition~$(9)$ is satisfied and
\[
\begin{aligned}
  B_0 & = b_0\\
  d_2 & = 2 c_0 r_2 - 8 a_0 b_0 r_2 + 16 a_0^3 r_2 + 2 b_0 b_2 - 4 a_0^2 b_2 + b_1^2 - 8 a_0 a_1 b_1 - 4 a_1^2 b_0 + 24 a_0^2 a_1^2\\
  c_1 & = 2 a_0 b_1 + 2 a_1 b_0 - 12 a_0^2 a_1,
\end{aligned}
\]
\item Condition~$(10)$ is satisfied and
\[
\begin{aligned}
c_0 & = 2 a_0 b_0 - 4 a_0^3\\
d_1 & = b_0 b_1 - 2 a_0^2 b_1 - 4 a_0 a_1 b_0 + 8 a_0^3 a_1,
\end{aligned}
\]
\item Condition~$(11)$ is satisfied and $\begin{aligned}[t]
d_0 = b_0^2 - 4 a_0^2 b_0 + 4 a_0^4.
\end{aligned}$
\end{enumerate}
Then $P_x$ is a $cA_n$ singularity of~$X$. Moreover, if $n \geq 9$, then the singularity is non-isolated.
\end{proposition}

\begin{proof}
Use the explicit splitting lemma (\cref{thm:con splitting explicit}) and repeatedly apply \cref{thm:con solve equations key} similarly to the proof of \cref{mai:con} \cref{itm:con theo every SDS is some X,itm:con theo every SDS is some X}.
\end{proof}

\begin{remark} \label{rem:con non-isol sing C1 times ODP}
\begin{enumerate}
\item By the proof of \cref{itm:con theo bound m leq 8} of \cref{mai:con}, if $X$ in \cref{thm:con cA11 examples} satisfies Condition~$(9)$, then $X$ is singular along the curve $C\colon \mathbb V(t, w, s_3 + 2a_1r_2 + xr_2)$ passing through~$P_x$. We can compute that at a general point of $C$, the singularity is locally analytically $\mathbb C^1 \times \operatorname{ODP}$, that is, it is isomorphic to the germ $(Z, \bm 0)$ where $Z$ is $\mathbb V(x_1^2 + x_2^2 + x_3^2) \subseteq \mathbb C^4$ with variables $x_1, x_2, x_3, x_4$.

\item Translating the point $P_t = [0, 0, 0, 1, 0]$ to $[1, 0, 0, 0, 0]$, we can find conditions similar to \cref{nota:con} for having a $cA_n$ singularity at $P_t \in X$, which can be used to construct general sextic double solids with two $cA_n$ singularities. The following is a simple example with $cA_5$ singularities at $P_x$ and at $P_t$:
\[
\mathbb V(-w^2 + x^4 t^2 + x^2 t^4 + y^6 + z^6) \subseteq \mathbb P(1, 1, 1, 1, 3).
\]
\end{enumerate}
\end{remark}

\section{\texorpdfstring
{Divisorial contractions with centre a $cA_n$ point}
{Divisorial contractions with centre a cAn point}} \label{sec:wei}

In this section, we discuss weighted blowups from both algebraic and local analytic points of view. In \cref{thm:wei cAn wt correct implies Kawakita blup} we show that to check whether a weighted blowup is a Kawakita blowup (see \cref{thm:pre Kawakita blowup}), it suffices to compute the weight of the defining power series. Using this, in the technical \cref{thm:wei cAn p v} we show how to algebraically construct Kawakita blowups of $cA_n$ points on affine hypersurfaces.

\subsection{Weight-respecting maps}

Let $n$ and $m$ be positive integers. Let $\bm x = (x_1, \ldots, x_n)$ and $\bm y = (y_1, \ldots, y_m)$ denote the coordinates on $\mathbb C^n$ and $\mathbb C^m$, respectively. Choose positive integer weights for $\bm x$ and~$\bm y$.

\begin{definition}
Let $X \subseteq \mathbb C^n$ and $X' \subseteq \mathbb C^m$ be complex analytic spaces. We say that a biholomorphic map $\psi\colon X \to X'$ taking $\bm 0$ to $\bm 0$ is \textbf{weight-respecting} if denoting its inverse by~$\theta$, we can locally analytically around the origins write $\psi = (\psi_1, \ldots, \psi_m)$ and $\theta = (\theta_1, \ldots, \theta_n)$ where for all $i$ and~$j$, the power series $\psi_j \in \mathbb C\{\bm x\}$ and $\theta_i \in \mathbb C\{\bm y\}$ satisfy $\operatorname{wt}(\psi_j) \geq \operatorname{wt}(y_j)$ and $\operatorname{wt}(\theta_i) \geq \operatorname{wt}(x_i)$.
\end{definition}

It is known that a biholomorphic map taking the origin to the origin lifts to a unique biholomorphic map of the blown-up spaces under the usual weights $(1, \ldots, 1)$ (see for example \cite[Remark~3.17.1(4)]{GLS07}). It is easy to come up with examples where a biholomorphic map does not lift under weighted blowups. We give one example below.

\begin{example} \label{exa:wei same wt does not imply equiv blup}
Let $X \subseteq \mathbb C^3$ be the complex analytic space given by $\mathbb V(f)$ where
\[
f = x_2^2 x_3 + x_1^3 + a x_1 x_3^2 + b x_3^3
\]
for some $a, b \in \mathbb C^*$. Define $X' \subseteq \mathbb C^3$ by $\mathbb V(f')$ where $f' = f(x_1, x_2, -x_2 + x_3)$. Choose weights $(1, 1, 2)$ for $(x_1, x_2, x_3)$. Then, $X$ and $X'$ are biholomorphic and $\operatorname{wt} f = \operatorname{wt} f'$, but the weighted blowups of $X$ and $X'$ are not locally analytically equivalent.
\end{example}

\begin{proof}
Let $\psi\colon X \to X'$ be any local biholomorphism taking the origin to the origin. Composing with a suitable weight-respecting biholomorphic map and using \cref{thm:wei wt-resp implies equiv blup}, it suffices to consider the case where $\psi$ is a linear biholomorphism. Since the elliptic curve defined by $f$ in $\mathbb P^2$ with variables $x_1, x_2, x_3$ has only two automorphisms, there are only four possibilities for a linear biholomorphism $X \to X'$, namely $(x_1, x_2, x_3) \mapsto (x_1, \pm x_2, \pm x_2 + x_3)$.

Let $Y \to X$ and $Y' \to X'$ be the $(1, 1, 2)$-blowups of $X$ and $X'$ respectively. Then $Y$ is given by $\mathbb V(g)$ where
\[
g(u, x_1, x_2, x_3) = u x_2^2 x_3 + x_1^3 + a u^2 x_1 x_3^2 + b u^3 x_3^3.
\]
Denoting the points of $Y$ and $Y'$ by $[u, x_1, x_2, x_3]$, the lifted map $\psi_Y\colon Y \to Y'$ is given by $[u, x_1, x_2, x_3] \mapsto [u, x_1, \pm x_2, \pm x_2/u + x_3]$, which is not holomorphic on the exceptional locus $\mathbb V(u)$.
\end{proof}

On the other hand, a weight-respecting coordinate change does lift to weighted blowups, see \cref{thm:wei wt-resp implies equiv blup}.

\begin{lemma} \label{thm:wei wt-resp equiv to isom of algebras}
Let $X \subseteq \mathbb C^n$ and $X' \subseteq \mathbb C^m$ be complex analytic spaces strictly containing the origins and $\psi\colon X \to X'$ a biholomorphism. Then $\psi$ is weight-respecting if and only if $\psi$ induces an isomorphism of the $\mathbb Z_{\geq0}$-graded $\mathcal O_{X'}$-algebras $\mathcal B_{X'}$ and~$\psi_* \mathcal B_X$ of \cref{def:pre wei blup projan}.
\end{lemma}

\begin{proof}
``$\Longrightarrow$''.
The induced morphism $\mathcal B_{X'} \to \psi_* \mathcal B_X$ is given by
\[
\begin{aligned}
  \mathcal B_{X'}(U) & \to \mathcal B_X(\psi^{-1} U)\\
  t^d \bar y_j & \mapsto t^d \bar{\psi}_j,
\end{aligned}
\]
where $U \subseteq X'$ is open.
Since $\operatorname{wt} \psi_j \geq \operatorname{wt} y_j$, the morphism $\mathcal B_{X'} \to \psi_* \mathcal B_X$ is well-defined. Similarly, we define a morphism $\mathcal B_{X'} \gets \psi_* \mathcal B_X$, which is its inverse.

``$\Longleftarrow$''. Let $t^{\operatorname{wt} y_j} \Psi_j$ be the image of $t^{\operatorname{wt} y_j} y_j$ under $\mathcal B_{X'}(X') \to \psi_* \mathcal B_X(X)$. There exists $\psi_j \in \mathbb C\{\bm x\}$ such that $\operatorname{wt} \psi_j \geq \operatorname{wt} y_j$ and $\bar{\psi}_j = \Psi_j$. Similarly, we can find $\theta_i$, showing that $\psi$ is weight-respecting.
\end{proof}

\begin{corollary} \label{thm:wei wt-resp implies equiv blup}
A weight-respecting biholomorphism $\psi$ from $X \subseteq \mathbb C^n$ to $X' \subseteq \mathbb C^m$ lifts to the weighted blown-up spaces.
\end{corollary}

\subsection{Kawakita blowup in analytic neighbourhoods}

In the following, we focus on Kawakita blowups (see \cref{thm:pre Kawakita blowup}). Unlike \cref{exa:wei same wt does not imply equiv blup}, for $cA_n$ singularities, having the correct weight for the defining power series is enough for the local analytic equivalence of weighted blowups.

\begin{notation} \label{nota:wei}
We choose positive integer weights $\bm w = (r_1, r_2, a, 1)$ for variables $\bm x = (x_1, x_2, x_3, x_4)$ on $\mathbb C^4$ and define $n = (r_1 + r_2)/a - 1$ such that
\begin{itemize}
\item $a$ divides $r_1 + r_2$ and is coprime to both $r_1$ and $r_2$,
\item $r_1 \geq r_2$, and
\item $n \geq 2$.
\end{itemize}
\end{notation}

\begin{proposition} \label{thm:wei cAn wt correct implies Kawakita blup}
Using \cref{nota:wei}, let $f \in \mathbb C\{\bm x\}$ be such that $\mathbb V(f)$ has an isolated $cA_n$ singularity at the origin and $f$ has weight $r_1 + r_2$. Then, the $\bm w$-blowup of $\mathbb V(f) \subseteq \mathbb C^4$ is a $\bm w$-Kawakita blowup.
\end{proposition}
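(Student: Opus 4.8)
The plan is to produce a weight-respecting biholomorphic coordinate change putting $f$ into Kawakita normal form and then invoke \cref{thm:wei weight-respecting implies equiv blup} together with \cref{thm:pre Kawakita blowup}. Concretely, I want to find a weight-respecting biholomorphism $\Phi$ of $(ℂ^4, \bm 0)$ and a unit $u ∈ ℂ\{\bm x\}$ with
\[
u \cdot (f ∘ \Phi) = x_1 x_2 + g(x_3, x_4),
\]
where $g ∈ ℂ\{x_3, x_4\}$ has weight $r_1 + r_2$ and contains the monomial $x_3^{n+1}$. Granting this, $\Phi$ restricts to a weight-respecting biholomorphism between $\mb V(f)$ and $\mb V(x_1 x_2 + g)$, so by \cref{thm:wei weight-respecting implies equiv blup} their $\bm w$-blowups are analytically equivalent. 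Condition~(1) of \cref{thm:pre Kawakita blowup} then holds by \cref{nota:wei}, while conditions~(2) and~(3) are precisely the asserted properties of $g$; hence the $\bm w$-blowup of $\mb V(f)$ is a $\bm w$-Kawakita blowup.

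The first step in building $\Phi$ is to normalize the quadratic part. Since $\mb V(f)$ is a $cA_n$ singularity with $n ≥ 2$, \cref{thm:pre cAn equation} shows that the multiplicity-two part of $f$ is a rank-two quadratic form $q$. The hypothesis $\mo{wt}(f) = r_1 + r_2$ forces every monomial of $q$ to have weight at least $r_1 + r_2$, and a short inspection of the admissible quadratic monomials (using $r_1 ≥ r_2$ and $n ≥ 2$) confines $q$ to the variables $x_1, x_2$: when $r_1 > r_2$ it is divisible by $x_1$, say $q = x_1 \ell$ with $\mo{wt}(\ell) ≥ r_2$, and when $r_1 = r_2$ it is a nondegenerate binary form in $x_1, x_2$. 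In either case, a weight-respecting linear change brings $q$ to $x_1 x_2$, legitimate because $\ell$ only involves variables of weight $≥ r_2$. The one configuration that could obstruct this, in which the rank-two part couples $x_1$ to $x_3$ rather than $x_2$, is ruled out by a weight count unless $a = r_2$ (when $x_2$ and $x_3$ have equal weight and may simply be interchanged): otherwise the multiplicity-$(n+1)$ residual would be forced into $x_2, x_4$, whose weights are too small to realize $\mo{wt}(f) = (n+1)a$.

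With $q = x_1 x_2$ the Hessian block in $(x_1, x_2)$ is invertible, so I split off these two directions by the weighted analogue of the splitting lemma \cref{thm:ana splitting}: solving $∂f/∂x_1 = ∂f/∂x_2 = 0$ for $x_1 = φ_1(x_3, x_4)$ and $x_2 = φ_2(x_3, x_4)$ by the implicit function theorem, translating, and completing the square yields $f ∘ \Phi = u\,(x_1 x_2 + g(x_3, x_4))$ with $g(x_3, x_4) = f(φ_1, φ_2, x_3, x_4)$. The substitutions eliminating $x_1$ and $x_2$ from the higher terms are of the form $x_2 ↦ x_2 - m$ and $x_1 ↦ x_1 - m'$ with $\mo{wt}(m) ≥ r_2$ and $\mo{wt}(m') ≥ r_1$, precisely because any monomial of $f$ divisible by $x_1$ (respectively $x_2$) to first order has cofactor of weight $≥ r_2$ (respectively $≥ r_1$) by $\mo{wt}(f) = r_1 + r_2$; thus $\Phi$ is weight-respecting. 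Since $\Phi$ and its inverse both raise weights, weight is preserved, so $\mo{wt}(x_1 x_2 + g) = r_1 + r_2$ and hence $\mo{wt}(g) ≥ r_1 + r_2$, while $g$ has multiplicity $n+1$ by \cref{thm:pre cAn equation} and \cref{thm:ana stable equivalence}. As the multiplicity-$(n+1)$ monomials $x_3^i x_4^{n+1-i}$ have weights $(n+1) + i(a-1)$, ranging from $n+1$ up to $(n+1)a = r_1 + r_2$, the bound $\mo{wt}(g) ≥ r_1 + r_2$ forces this part to be $c\,x_3^{n+1}$ with $c ≠ 0$ when $a > 1$, giving both $\mo{wt}(g) = r_1 + r_2$ and condition~(3); when $a = 1$ one applies the extra weight-respecting change $x_3 ↦ x_3 + c\,x_4$ to make $x_3^{n+1}$ appear.

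The hard part will be the weight-respecting splitting in the last step: one must run the generalized Morse reduction (as in \cref{thm:ana splitting}, now in the two variables $x_1, x_2$) while verifying at every stage that the analytic coordinate changes raise weights, which is exactly where $\mo{wt}(f) = r_1 + r_2$ is indispensable, with convergence controlled as in the proof of \cref{thm:ana splitting} via the Weierstrass preparation and implicit function theorems. The subsidiary difficulty is the linear normalization of the quadratic part, where the arithmetic of $r_1, r_2, a$ and the constraint $n ≥ 2$ must be used to exclude the misaligned rank-two forms.
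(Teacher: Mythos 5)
Your proposal is essentially the paper's own proof: both normalize the quadratic part of $f$ to $x_1 x_2$ by a weight-respecting linear change, perform a weight-respecting splitting in $x_1, x_2$ (you via the implicit function theorem and parametrized completion of the square, the paper via truncated formal coordinate changes plus \cref{thm:ana splitting} conjugated by $x_1 \mapsto x_1 + i x_2$, $x_2 \mapsto x_1 - i x_2$), deduce that the residual $g(x_3, x_4)$ has weight $r_1 + r_2$ and contains $x_3^{(r_1+r_2)/a}$ (after a linear change in $x_3, x_4$ when $a = 1$), and conclude with \cref{thm:wei weight-respecting implies equiv blup}. The one substantive difference is your case analysis excluding a rank-two quadratic part coupling $x_1$ to $x_3$ or $x_4$ --- a case the paper passes over in silence --- but note that your one-line weight count there is not yet a proof: in that misaligned configuration the splitting is \emph{not} weight-respecting, so $\wt(f) = (n+1)a$ does not directly bound the weight of the residual, and one must first bootstrap lower bounds on the multiplicities of the critical-point solutions $\varphi_1, \varphi_3$; the exclusion is nevertheless true, so your treatment of this point is, if anything, more careful than the paper's.
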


\begin{proof}
First, we remind that the terms \emph{homogeneous}, \emph{degree} and \emph{multiplicity} are with respect to the standard weights $(1, \ldots, 1)$.
Let the \emph{quadratic part} of $f$ denote the homogeneous part of~$f$ of degree~$2$.
After a suitable invertible linear weight-respecting coordinate change, the quadratic part of $f$ is $x_1 x_2$.

We find that $f = x_1 x_2 + x_1 G + H$, where $G \in \mathbb C\{x_1, \ldots, x_4\}$ has weight at least $r_2$ and multiplicity $m \geq 2$, and $H \in \mathbb C\{x_2, x_3, x_4\}$. The coordinate change $x_2 \mapsto x_2 - G_m$, where $G_m$ is the homogeneous degree $m$ part of~$G$, takes $f$ to $x_1 x_2 + x_1 G' + H'$, where $G'$ has multiplicity at least $m + 1$. By induction, this defines the unique formal power series $K \in \mathbb C[[x_1, \ldots, x_4]]$ of multiplicity at least~$2$ and weight at least~$r_2$ such that the transformation $x_2 \mapsto x_2 + K$ takes $f$ to the form $x_1 x_2 + H''$ where $H'' \in \mathbb C[[x_2, x_3, x_4]]$. Similarly, we transform $f$ into $x_1 x_2 + h$ where $h \in \mathbb C[[x_3, x_4]]$, using $x_1 \mapsto x_1 + L$ where $L \in \mathbb C[[x_2, x_3, x_4]]$.

We show how to find a convergent weight-respecting coordinate change which changes $f$ to $x_1 x_2 + h$. Instead of the coordinate changes $x_2 \mapsto x_2 + K$, $x_1 \mapsto x_1 + L$, which might not be convergent, we do a coordinate change $\Theta_N$ with truncated power series $K_{\leq N}$ and~$L_{\leq N}$ of homogeneous parts of $K$ and~$L$ of degree at most~$N$. The coordinate change $\Psi\colon x_1 \mapsto x_1 + i x_2$, $x_2 \mapsto x_1 - i x_2$ takes $x_1 x_2$ into $x_1^2 + x_2^2$. Now we use the splitting lemma, which gives a convergent coordinate change $\Phi_N$ which respects the weighting when $N$ is large enough, to give $f$ the form $x_1^2 + x_2^2 + h(x_3, x_4)$ where $h$ converges. Applying $\Psi^{-1}$, we get $x_1 x_2 + h$. Note that the coordinate changes $\Psi$ and $\Psi^{-1}$ might not respect the weighting~$\bm w$, but the total coordinate change $\Psi^{-1} \circ \Phi_N \circ \Psi \circ \Theta_N$ is weight-respecting if $N$ is large enough.

Since the singularity is $cA_n$ where $n = (r_1 + r_2)/a - 1$, $h$ must contain a monomial of degree $(r_1 + r_2)/a$. Since $x_1 x_2 + h$ has weight $r_1 + r_2$, if $a > 1$, then the coefficient of $x_3^{(r_1 + r_2)/a}$ in $h$ is non-zero. If $a = 1$, then after a suitable invertible linear coordinate change on $\mathbb C\{x_3, x_4\}$, the coefficient of $x_3^{(r_1 + r_2)/a}$ in $h$ is non-zero.

We found that we can transform $f$ into the form $x_1 x_2 + h$ where the coefficient of $x_3^{(r_1 + r_2)/a}$ in $h$ is non-zero, by using only weight-respecting coordinate changes. By \cref{thm:wei wt-resp implies equiv blup}, the weighted blowup of $f$ is locally analytically equivalent to the weighted blowup of $x_1 x_2 + h$, which is precisely a Kawakita blowup.
\end{proof}

Given a variety $X$ with an isolated $cA_n$ point $P$, we show that any two $\bm w$-Kawakita blowups $Y \to X$ and $Y' \to X$ of the point $P$ are locally analytically equivalent. Note that they need not be globally algebraically equivalent. For example, \cite[Remark~2.4]{CM04} describes two different $(2, 1, 1, 1)$-Kawakita blowups of a $cA_2$ singularity on a quartic 3-fold.

\begin{proposition} \label{thm:wei the Kawakita blowup}
Any two $\bm w$-Kawakita blowups of locally biholomorphic singularities are locally analytically equivalent.
\end{proposition}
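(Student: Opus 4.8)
The plan is to reduce both blowups to the $\bm w$-blowup of a two-variable normal form and then to match these by a weight-respecting biholomorphism, invoking \cref{thm:wei weight-respecting implies equiv blup}. By definition (see \cref{thm:pre Kawakita blowup}), the first $\bm w$-Kawakita blowup is locally analytically equivalent to the $\bm w$-blowup of $\mb V(x_1 x_2 + h) \subseteq \mathbb{C}^4$ and the second to the $\bm w$-blowup of $\mb V(x_1 x_2 + h')$, where $h, h' \in \mathbb{C}\{x_3, x_4\}$ have weight $r_1 + r_2$ and non-zero coefficient of $x_3^{(r_1+r_2)/a}$. Because both base germs are isomorphic to the given common singularity, the germs $\mb V(x_1 x_2 + h)$ and $\mb V(x_1 x_2 + h')$ are biholomorphic, that is, $x_1 x_2 + h$ and $x_1 x_2 + h'$ are contact equivalent. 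So it is enough to show that the $\bm w$-blowups of two such contact-equivalent normal forms are locally analytically equivalent.

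Next I would pass to the two essential variables. The linear change $x_1 \mapsto x_1 + i x_2$, $x_2 \mapsto x_1 - i x_2$ turns $x_1 x_2$ into $x_1^2 + x_2^2$, so $x_1 x_2 + h$ and $x_1 x_2 + h'$ are contact equivalent to $x_1^2 + x_2^2 + h$ and $x_1^2 + x_2^2 + h'$; since the first pair is contact equivalent, \cref{thm:ana stable equivalence} shows that $h$ and $h'$ are contact equivalent in $\mathbb{C}\{x_3, x_4\}$. It then suffices to find a weight-respecting contact equivalence between $h$ and $h'$ in the variables $(x_3, x_4)$: if $\theta$ is weight-respecting on $(x_3, x_4)$ and $w$ is a unit with $h \circ \theta = w h'$, then $\Psi = (x_1, w x_2, \theta_3, \theta_4)$ is a weight-respecting biholomorphism of $(\mathbb{C}^4, \bm 0)$ satisfying $(x_1 x_2 + h) \circ \Psi = w(x_1 x_2 + h')$, hence $\Psi$ restricts to a weight-respecting biholomorphism between the two normal-form hypersurfaces and \cref{thm:wei weight-respecting implies equiv blup} completes the proof.

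For this two-variable problem I would first normalize by Weierstrass preparation, which applies because the coefficient of $x_3^{(r_1+r_2)/a}$ is non-zero: write $h = u P$ with $u$ a unit and $P = x_3^{n+1} + \sum_{i=1}^{n+1} a_i(x_4)\, x_3^{n+1-i}$ a Weierstrass polynomial, where $n+1 = (r_1+r_2)/a$, and likewise $h' = u' P'$. Comparing weights forces $\wt(a_i) \geq i a$, so the Tschirnhaus substitution $x_3 \mapsto x_3 - a_1/(n+1)$, whose correction term has weight at least $a = \wt(x_3)$, is weight-respecting and removes the $x_3^n$ term; performing the same normalization on $P'$, the task reduces to matching the two resulting weighted Weierstrass polynomials by a weight-respecting change of the single variable $x_4$, together with a rescaling of $x_3$ and a unit factor.

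The main obstacle is exactly this last matching, since a contact equivalence between $h$ and $h'$ produced by \cref{thm:ana stable equivalence} need not respect the grading at all, as \cref{exa:wei same wt does not imply equiv blup} warns in the general setting. I would settle it by a path argument in the spirit of Thom and Levine, adapted to the grading: connect the two normalized forms by the family $h_\tau = (1 - \tau) h + \tau h'$, exploit that they share the same weighted leading order and that isolatedness of the $cA_n$ point gives finite determinacy, and integrate at each $\tau$ an infinitesimal contact equivalence whose coordinate components are chosen of weight at least that of the corresponding variable. The delicate point is the weight bookkeeping, namely showing that the integrating vector field can be kept weight-respecting along the entire path, which is precisely the feature of $cA_n$ singularities that is absent from the general weighted blowup of \cref{exa:wei same wt does not imply equiv blup}.
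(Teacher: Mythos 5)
Your first half is sound and essentially matches the paper's own strategy: reduce both Kawakita blowups to normal forms $x_1x_2+h$ and $x_1x_2+h'$, deduce from \cref{thm:ana stable equivalence} that $h$ and $h'$ are contact equivalent in $\mathbb{C}\{x_3,x_4\}$, and observe that a \emph{weight-respecting} contact equivalence $h\circ\theta = w\,h'$ extends to $\Psi=(x_1,\,w x_2,\,\theta_3,\,\theta_4)$ and finishes via \cref{thm:wei weight-respecting implies equiv blup}. The gap is at the crux, namely producing the weight-respecting equivalence. Your path argument is not only incomplete (you concede the ``weight bookkeeping'' is unproved) but fails as stated: the linear family $h_\tau=(1-\tau)h+\tau h'$ need not stay equisingular, and its members can even fail to be isolated singularities. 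For instance $h=x_3^{n+1}+x_4^{a(n+1)}$ and $h'=x_3^{n+1}-x_4^{a(n+1)}$ are right equivalent (substitute $x_4\mapsto\zeta x_4$ with $\zeta^{a(n+1)}=-1$) and satisfy all the hypotheses, yet $h_{1/2}=x_3^{n+1}$ has a non-isolated singularity with infinite Tjurina number, so finite determinacy and the Thom--Levine integration collapse at $\tau=1/2$. The same example shows your premise that the two forms ``share the same weighted leading order'' is unjustified: they share only the same weight, not the same (or proportional) leading forms.

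What closes the argument --- and is precisely the paper's key observation --- is that nothing needs to be constructed: \emph{every} equivalence between $h$ and $h'$ is automatically weight-respecting. Suppose $h\circ\theta=w\,h'$ with $\theta=(\theta_3,\theta_4)$ and $\wt(\theta_3)<a$. Write $h=\sum c_{ij}x_3^ix_4^j$; every monomial satisfies $ai+j\ge a(n+1)$, and $c_{n+1,0}\neq 0$. Since $\wt(\theta_4)\ge 1$, for every $(i,j)\neq(n+1,0)$ with $c_{ij}\neq 0$ one has $i\,\wt(\theta_3)+j\,\wt(\theta_4)>(n+1)\wt(\theta_3)$ (for $i<n+1$ use $j\ge a(n+1-i)$ and $a\,\wt(\theta_4)\ge a>\wt(\theta_3)$; the cases $i\ge n+1$ are immediate). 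Hence the lowest-weight part of $h\circ\theta$ is $c_{n+1,0}$ times the $(n+1)$-st power of the lowest part of $\theta_3$, which cannot cancel, so $\wt(h\circ\theta)=(n+1)\wt(\theta_3)<a(n+1)$, contradicting $\wt(w\,h')=a(n+1)$; the same applies to $\theta^{-1}$. This is exactly how the paper argues (there for right equivalence, after absorbing the unit and renormalizing by a weight-respecting change as in the proof of \cref{thm:wei cAn wt correct implies Kawakita blup}). With this observation your reduction closes immediately, and the Weierstrass preparation, the Tschirnhaus substitution and the entire path argument become unnecessary.
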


\begin{proof}
Let $f = x_1 x_2 + g(x_3, x_4)$ and $f' = x_1 x_2 + g'(x_3, x_4)$ be contact equivalent, where $g, g' \in \mathbb C\{x_3, x_4\}$ have weight $r_1 + r_2$ and $x_3^{(r_1 + r_2)/a}$ appears in both $g$ and in $g'$ with non-zero coefficient. It suffices to show that there exist a weight-respecting map from $\mathbb V(f)$ to $\mathbb V(f')$.

Since $f$ and $f'$ are contact equivalent, there exists a unit $u \in \mathbb C\{\bm x\}$ and a local biholomorphism $\psi\colon (\mathbb C^4, \bm 0) \to (\mathbb C^4, \bm 0)$ such that $f' = u(f \circ \psi)$. Note that $f'$ and $f \circ \psi$ have the same weight $r_1 + r_2$, and $x_3^{(r_1 + r_2)/a}$ appears in $f \circ \psi$ with non-zero coefficient. It suffices to show that there exist a weight-respecting map from $\mathbb V(f)$ to $\mathbb V(f \circ \psi)$.

Using arguments similar to the proof of \cref{thm:wei cAn wt correct implies Kawakita blup}, we can find a weight-respecting biholomorphic map germ $\theta\colon (\mathbb C^4, \bm 0) \to (\mathbb C^4, \bm 0)$ such that $f \circ \psi \circ \theta$ is of the form $x_1 x_2 + g''$ where $g'' \in \mathbb C\{x_3, x_4\}$ contains $x^{(r_1 + r_2)/a}$ and has weight $r_1 + r_2$. It suffices to show that there exist a weight-respecting map from $\mathbb V(f)$ to $\mathbb V(f \circ \psi \circ \theta)$.

By \cref{thm:ana stable equivalence}, $g$ and $g''$ are right equivalent, meaning there exists an automorphism $\Phi$ of $\mathbb C\{x_3, x_4\}$ such that $\Phi(g) = g''$. Since $x_3^{(r_1 + r_2)/a}$ has non-zero coefficient in both $g$ and $g''$, and both $g$ and $g''$ have weight $r_1 + r_2$, the image of $x_3$ has weight~$a$ under both $\Phi$ and~$\Phi^{-1}$. Define the biholomorphic map germ $\varphi\colon (\mathbb V(f \circ \psi \circ \theta), \bm 0) \to (\mathbb V(f), \bm 0)$ by $\bm x \mapsto (x_1, x_2, \Phi(x_3), \Phi(x_4))$. By \cref{thm:wei wt-resp implies equiv blup}, the $\bm w$-blowups of $\mathbb V(f \circ \psi \circ \theta) \subseteq \mathbb C^4$ and $\mathbb V(f) \subseteq \mathbb C^4$ are locally analytically equivalent.
\end{proof}

\subsection{Kawakita blowups on affine hypersurfaces}

In this section, we see how to construct weighted blowups for affine hypersurfaces with a $cA_n$ singularity where $n \geq 2$ such that locally analytically they are Kawakita blowups.

Most $cA_n$ singularities do not admit $(r_1, r_2, a, 1)$-Kawakita blowups where $a \geq 2$. Below we define the \emph{type} of an isolated $cA_n$ singularity, which for $n \geq 2$ is equal to the highest integer $a$ such that it admits some $(r_1, r_2, a, 1)$-Kawakita blowup locally analytically. General sextic double solids with an isolated $cA_n$ singularity have a type~$1$ $cA_n$ singularity.

\begin{definition} \label{def:wei type}
Let $(X, P)$ be the complex analytic space germ of an isolated $cA_n$ singularity. Let $a$ be the largest integer such that $(X, P)$ is isomorphic to some germ $(\mathbb V(x_1 x_2 + g), \bm 0)$ where $g \in \mathbb C\{x_3, x_4\}$ has weight $a(n+1)$ under the weighting $(a, 1)$ for $(x_3, x_4)$. Then, we say that the $cA_n$ singularity is of \textbf{type $a$}.
\end{definition}

It is not obvious how to globally algebraically construct a Kawakita blowup for a variety with a $cA_n$ singularity. We show this for affine hypersurfaces in the technical \cref{thm:wei cAn p v}. We use a projectivization of \cref{thm:wei SDS cAn p} in \cref{sec:mod} for constructing Kawakita blowups of sextic double solids.

We describe the notation for \cref{thm:wei cAn p v}. Choose positive integers $n, r_1, r_2$ and $a$ as in \cref{nota:wei}. Let $F \in \mathbb C[x_1, x_2, x_3, x_4]$ have multiplicity at least~$3$, and let
\[
f = -x_1^2 + x_2^2 + F
\]
be such that $\mathbb V(f) \subseteq \mathbb C^4$ has terminal singularities and has a $cA_n$ singularity of type at least $a$ at the origin. Let $q, w$ be the power series when splitting with respect to $x_1$ (\cref{thm:con splitting}), and $p, v$ be the power series when splitting with respect to $x_2$, that is,
\begin{equation} \label{eqn:wei affine hyp f}
f = -((x_1 + q)w)^2 + ((x_2 + p)v)^2 + h
\end{equation}
where $q \in \mathbb C\{x_2, x_3, x_4\}$ and $p \in \mathbb C\{x_3, x_4\}$ both have multiplicity at least~2, and $w \in \mathbb C\{x_1, x_2, x_3, x_4\}$ and $v \in \mathbb C\{x_2, x_3, x_4\}$ are units, and $h \in \mathbb C\{x_3, x_4\}$ has multiplicity at least~3. If $a > 1$, then perform a coordinate change on $x_3, x_4$ for $f$ such that $h$ has weight $r_1 + r_2$.

Now choose weights
\[
\bm w = \operatorname{wt}(\alpha, \beta, x_3, x_4) = (r_1, r_2, a, 1)
\]
for the variables $\alpha, \beta, x_3, x_4$ on $\mathbb C^4$ and
\[
\bm w' = \operatorname{wt}(\alpha, \beta, x_1, x_2, x_3, x_4) = (r_1,\ r_2,\ m,\ \min(r_2, \operatorname{mult} p),\ a,\ 1)
\]
for the variables $\alpha, \beta, x_1, x_2, x_3, x_4$ on~$\mathbb C^6$, where $m = \min(r_2, \operatorname{mult} q)$. Writing a power series $s \in \mathbb C\{x_1, x_2, x_3, x_4\}$ as a sum of its $\bm w'$-weighted homogeneous parts $s = \sum_{i=0}^\infty s_i$, let $s_{<k}$ denote $\sum_{i<k} s_i$ and $s_{\geq k}$ denote $\sum_{i\geq k} s_i$. Define the ideal
\[
I = (f,\ -\alpha + (x_1 + q_{<r_1})w_{<r_1-m} + (x_2 + p_{<r_1})v_{<r_1-r_2},\ -\beta + x_2 + p_{<r_2})
\]
of $\mathbb C[\alpha, \beta, x_1, x_2, x_3, x_4]$, where $v_{<r_1-r_2}$ is defined to be $1$ when $r_1 = r_2$ and where $w_{<r_1-m}$ is defined to be $1$ when $r_1 = m$. Note that the affine varieties $\mathbb V(f) \subseteq \mathbb C^4$ and $\mathbb V(I) \subseteq \mathbb C^6$ are isomorphic.

\begin{lemma} \label{thm:wei cAn p v}
Using the notation above, the $\bm w'$-blowup of $\mathbb V(I)$ is a $\bm w$-Kawakita blowup.
\end{lemma}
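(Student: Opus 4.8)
The plan is to eliminate $x_1$ and $x_2$ so as to reduce $\mb V(I)$ to a hypersurface germ in $ℂ^4$ and then apply \cref{thm:wei cAn wt correct implies Kawakita blup} through the weight-respecting criterion \cref{thm:wei weight-respecting implies equiv blup}. The third generator of $I$ gives $x_2 = β - p_{<r_2}$ as a function of $β, x_3, x_4$; substituting this into the second generator and using that $w_{<r_1-m}$ is a unit (it has non-zero constant term $w(\bm 0)$, with the convention $w_{<r_1-m} = 1$ when $r_1 = m$), one solves for $x_1$ as a function of $α, β, x_3, x_4$. Substituting both into $f$ yields $\bar f ∈ ℂ\{α, β, x_3, x_4\}$, and forgetting $x_1, x_2$ gives a biholomorphism $ψ\co \mb V(I) → \mb V(\bar f) ⊆ ℂ^4$ with $ψ(\bm 0) = \bm 0$. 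Since $\mb V(I) \cong \mb V(f)$, the germ $\mb V(\bar f)$ again has an isolated $cA_n$ singularity.

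First I would verify that $ψ$ is weight-respecting for $\bm w'$ on $ℂ^6$ and $\bm w = (r_1, r_2, a, 1)$ on $ℂ^4$. The map $ψ$ itself only records $α, β, x_3, x_4$, which carry equal weights on both sides, so the content lies in the inverse $θ$, i.e.\ in the weights of the reconstructed $x_2 = β - p_{<r_2}$ and $x_1$. This is exactly where the truncation indices in $I$ are designed to work: since $x_3, x_4$ have $\bm w$-weight at least $1$, every term of $p_{<r_2}$ has $\bm w$-weight at least $\mult p$, giving $\wt_{\bm w}(θ_{x_2}) ≥ \min(r_2, \mult p) = \wt_{\bm w'}(x_2)$; and after substitution the combination $p_{<r_1} - p_{<r_2} = p_{[r_2, r_1)}$ has weight at least $r_2$, so the numerator $α - (β + p_{[r_2, r_1)})v_{<r_1-r_2}$ has weight $r_2$, which together with $\wt_{\bm w}(q_{<r_1}) ≥ \mult q ≥ m$ yields $\wt_{\bm w}(θ_{x_1}) ≥ \min(r_2, \mult q) = m = \wt_{\bm w'}(x_1)$. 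By \cref{thm:wei weight-respecting implies equiv blup} the $\bm w'$-blowup of $\mb V(I)$ is then analytically equivalent to the $\bm w$-blowup of $\mb V(\bar f)$.

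Next I would check the remaining hypothesis of \cref{thm:wei cAn wt correct implies Kawakita blup}, namely $\wt_{\bm w}(\bar f) = r_1 + r_2$. Writing the analytic splitting of \cref{thm:ana splitting} as $f = (\tilde β - \tilde α)(\tilde β + \tilde α) + h$ with $\tilde α = (x_1 + q)w$ and $\tilde β = (x_2 + p)v$, I would substitute the solved $x_1, x_2$ and track leading $\bm w$-parts. The truncations are arranged so that $\tilde β + \tilde α = α$ up to terms of weight at least $r_1$, while $\tilde β - \tilde α$ has weight $r_2$ with leading term $2 v(\bm 0)\, β$; hence the product has weight-$(r_1+r_2)$ part containing $2 v(\bm 0)\, αβ$, and adding $h$ (which has weight exactly $r_1 + r_2$, after the coordinate change on $x_3, x_4$ when $a > 1$, and lies in $ℂ\{x_3, x_4\}$ so cannot cancel the $αβ$ term) gives $\wt_{\bm w}(\bar f) = r_1 + r_2$. \Cref{thm:wei cAn wt correct implies Kawakita blup} then shows the $\bm w$-blowup of $\mb V(\bar f)$ is a $\bm w$-Kawakita blowup, and by the equivalence above so is the $\bm w'$-blowup of $\mb V(I)$.

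The step I expect to be the main obstacle is the weight bookkeeping in the middle two paragraphs: checking that the indices $r_1$, $r_1 - m$, $r_1 - r_2$ and $r_2$ in the generators of $I$ are simultaneously the right ones to make $θ$ weight-respecting and to produce the two cancellations $\tilde β + \tilde α = α + (\text{weight} ≥ r_1)$ and $\tilde β - \tilde α = 2 v(\bm 0)\, β + (\text{weight} > r_2)$. The boundary cases $r_1 = r_2$ (where $v_{<r_1-r_2} = 1$ and the $α^2$-contribution also sits in weight $r_1 + r_2$) and $r_1 = m$ (where $w_{<r_1-m} = 1$) are handled by the conventions in the definition of $I$ but should be checked separately.
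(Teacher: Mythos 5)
Your proposal is correct and takes essentially the same route as the paper's proof: the paper defines the graph map $ψ\co (ℂ^4, \bm 0) → (ℂ^6, \bm 0)$ (the inverse of your projection), observes that its restriction gives a weight-respecting biholomorphism between $\mb V(f ∘ ψ)$ and $\mb V(I)$ so that \cref{thm:wei weight-respecting implies equiv blup} applies, and then verifies $\wt(f ∘ ψ) = r_1 + r_2$ using the same factorization $f = \lt((x_2+p)v - (x_1+q)w\rt)\lt((x_2+p)v + (x_1+q)w\rt) + h$ coming from \cref{eqn:wei affine hyp f}, before invoking \cref{thm:wei cAn wt correct implies Kawakita blup}. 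Your explicit leading-term $2v(\bm 0)\,αβ$ argument just makes precise the non-cancellation with $h$ that the paper leaves implicit.
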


\begin{proof}
The morphism
\[
\begin{aligned}
\varphi\colon \mathbb C^4 & \to \mathbb C^4\\
(x_1, x_2, x_3, x_4) & \mapsto ((x_1 + q_{<r_1})w_{<r_1-m} + (x_2 + p_{<r_1})v_{<r_1-r_2},\ x_2 + p_{<r_2},\ x_3,\ x_4)
\end{aligned}
\]
has a local analytic inverse~$\varphi^{-1}$, given by
\[
\begin{aligned}
\varphi^{-1}\colon (\mathbb C^4, \bm 0) & \to (\mathbb C^4, \bm 0)\\
(\alpha, \beta, x_3, x_4) & \mapsto ((\alpha - (\beta - p_{<r_2} + p_{<r_1})v')u - q',\ \beta - p_{<r_2},\ x_3,\ x_4)
\end{aligned}
\]
where $u \in \mathbb C\{\alpha, \beta, x_3, x_4\}$ is a unit, $v' = v_{<r_1-r_2}(\beta - p_{<r_2}, x_3, x_4)$ and $q' = q_{<r_1}(\beta - p_{<r_2}, x_3, x_4)$. Define the map germ
\[
\begin{aligned}
\psi\colon (\mathbb C^4, \bm 0) & \to (\mathbb C^6, \bm 0)\\
(\alpha, \beta, x_3, x_4) & \mapsto (\alpha, \beta, \varphi^{-1}(\alpha, \beta, x_3, x_4)).
\end{aligned}
\]
The restriction of $\psi$ to $\mathbb V(I) \to \mathbb V(f \circ \psi)$ is a weight-respecting local biholomorphism, whose inverse is a projection. Therefore, the $\bm w$-blowup of $\mathbb V(f \circ \psi)$ is equivalent to the $\bm w'$-blowup of $\mathbb V(I)$. If the $\bm w$-weight of $f \circ \psi$ is $r_1 + r_2$, then by \cref{thm:wei cAn wt correct implies Kawakita blup}, the $\bm w$-blowup of $\mathbb V(f \circ \psi)$ is the $\bm w$-Kawakita blowup map germ. Using \cref{eqn:wei affine hyp f}, it suffices to show that
\begin{align}
\operatorname{wt}[((x_1 + q)w + (x_2 + p)v) \circ \psi] & = r_1 \label{eqn:wei affine hyp r1}\\*
\operatorname{wt}[(-(x_1 + q)w + (x_2 + p)v) \circ \psi] & = r_2 \label{eqn:wei affine hyp r2}.
\end{align}
Since $\psi$ is weight-respecting, we have
\begin{align*}
\operatorname{wt}[(x_1 + q)w_{\geq r_1-m} \circ \psi)] & \geq r_1\\*
\operatorname{wt}[q_{\geq r_1}w_{<r_1-m} \circ \psi)] & \geq r_1\\
\operatorname{wt}[(x_2 + p)v_{\geq r_1-r_2} \circ \psi] & \geq r_1\\*
\operatorname{wt}[p_{\geq r_1} v_{<r_1-r_2} \circ \psi] & \geq r_1.
\end{align*}
Since $((x_1 + q_{<r_1})w_{<r_1-m} + (x_2 + p_{<r_1})v_{<r_1-r_2}) \circ \psi = \alpha$, this proves \cref{eqn:wei affine hyp r1}. Using in addition that $\operatorname{wt}[(x_2 + p_{<r_1})v_{<r_1-r_2} \circ \psi] = r_2$, \cref{eqn:wei affine hyp r2} follows.
\end{proof}

\begin{corollary} \label{thm:wei SDS cAn p}
Using the notation above, if $F \in \mathbb C[x_2, x_3, x_4]$, or equivalently, if $q = 0$ and $w = 1$, then define the ideal $J \subseteq \mathbb C[\alpha, \beta, x_2, x_3, x_4]$ by
\begin{equation} \label{eqn:wei cAn p v - gen}
J = (-(\alpha - (x_2 + p_{<r_1})v_{<r_1-r_2})^2 + x_2^2 + F,\ -\beta + x_2 + p_{<r_2}),
\end{equation}
where $v_{<r_1-r_2}$ is defined to be $1$ if $r_1 = r_2$. Then, $\mathbb V(J)$ and $\mathbb V(f)$ are isomorphic affine varieties, and the $(r_1, r_2, \min(r_2, \operatorname{mult} p), a, 1)$-blowup of $\mathbb V(J)$ is a $\bm w$-Kawakita blowup. If in addition $r_1 = r_2$, then define the ideal $J' \subseteq \mathbb C[x_1, \beta, x_2, x_3, x_4]$ by
\begin{equation} \label{eqn:wei cAn p v - r1 r2 equal}
J' = (f, -\beta + x_2 + p_{<r_2}).
\end{equation}
Then, $\mathbb V(J')$ and $\mathbb V(f)$ are isomorphic affine varieties, and the $(r_1, r_2, \min(r_2, \operatorname{mult} p), a, 1)$-blowup of $\mathbb V(J')$ is a $\bm w$-Kawakita blowup.
\end{corollary}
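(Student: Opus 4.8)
The plan is to deduce the corollary from \cref{thm:wei cAn p v} by specialising to $q = 0$, $w = 1$ and then discarding the one redundant blow-up variable, so that the six-dimensional $\mb V(I) ⊆ ℂ^6$ of the lemma is replaced by the five-dimensional $\mb V(J) ⊆ ℂ^5$. First I would record the simplifications forced by $q = 0$, $w = 1$: since $\mult q = ∞$ we get $m = \min(r_2, \mult q) = r_2$, and $q_{<r_1} = 0$, $w_{<r_1 - m} = 1$, so the ideal $I$ becomes $(f,\ -α + x_1 + (x_2 + p_{<r_1})v_{<r_1-r_2},\ -β + x_2 + p_{<r_2})$. The middle generator is now linear in $x_1$, and substituting $x_1 = α - (x_2 + p_{<r_1})v_{<r_1-r_2}$ turns the first generator into precisely the first generator of $J$, identifying $\mb V(I)$ with $\mb V(J)$. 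The isomorphism $\mb V(J) \cong \mb V(f)$ is then routine: eliminate $β$ using the second generator and apply the triangular coordinate change $α ↦ α - (x_2 + p_{<r_1})v_{<r_1-r_2}$ on $ℂ^4$, which carries the remaining equation back to $-x_1^2 + x_2^2 + F = f$. The same elimination of $β$ gives $\mb V(J') \cong \mb V(f)$ in the case $r_1 = r_2$.

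For the blow-up statement I would \emph{not} try to descend the blow-up of $\mb V(I)$ directly, since once $x_2$ is a free coordinate of weight $\min(r_2, \mult p)$ the insertion of $x_1$ need not be weight-respecting; instead I re-use the map of \cref{thm:wei cAn p v} one level down. Projecting $\mb V(J) → ℂ^4$ with coordinates $α, β, x_3, x_4$ by forgetting $x_2$, with inverse inserting $x_2 = β - p_{<r_2}$, gives a biholomorphism onto $\mb V(f ∘ ψ)$, where $ψ$ is the map from the proof of \cref{thm:wei cAn p v} and $f ∘ ψ$ is the function obtained by substituting $x_2 = β - p_{<r_2}$ into the first generator of $J$. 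The target $ℂ^4$ carries the weights $\bm w = (r_1, r_2, a, 1)$, and the only nontrivial component of the map is $x_2 = β - p_{<r_2}$; since $β$ has weight $r_2$ and $\wt(p_{<r_2}) ≥ \mult p$, the insertion has weight at least $\min(r_2, \mult p) = \wt(x_2)$, so the projection and its inverse are weight-respecting. By \cref{thm:wei weight-respecting implies equiv blup} the $(r_1, r_2, \min(r_2, \mult p), a, 1)$-blow-up of $\mb V(J)$ is therefore equivalent to the $\bm w$-blow-up of $\mb V(f ∘ ψ)$, and since \cref{eqn:wei affine hyp r1,eqn:wei affine hyp r2} already establish $\wt(f ∘ ψ) = r_1 + r_2$, \cref{thm:wei cAn wt correct implies Kawakita blup} identifies this with the $\bm w$-Kawakita blow-up.

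For the second assertion I would treat $r_1 = r_2$ separately. Then $v_{<r_1-r_2} = 1$ and the generator defining $α$ is linear in $α$; eliminating $α$ rather than $x_1$ keeps the original variable $x_1$ (now of weight $r_1 = r_2 = m$) and leaves $f$ untouched, yielding $J' = (f, -β + x_2 + p_{<r_2})$. The same projection forgetting $x_2$ sends $\mb V(J')$ to $ℂ^4$ with coordinates $x_1, β, x_3, x_4$ and weights $\bm w$, and the restricted function is $-x_1^2 + (β - p_{<r_2})^2 + F$. The crucial point is the cancellation $x_2 + p = (β - p_{<r_2}) + p = β + p_{≥r_2}$, so the low-weight part of $p$ disappears and this function again has $\bm w$-weight $r_1 + r_2$, exactly as in the lemma; the argument of the previous paragraph then applies verbatim. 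This shortcut needs $r_1 = r_2$ because keeping $-x_1^2$ (of weight $2r_1$) in place of the balanced combination $-(α - \cdots)^2$ produces the correct total weight $r_1 + r_2$ only when $2r_1 = r_1 + r_2$.

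The main obstacle I anticipate is the weight bookkeeping rather than any new idea: verifying that forgetting $x_2$ (and not $β$) is the direction that lands on the weights $\bm w$, that the insertion $x_2 = β - p_{<r_2}$ is weight-respecting with weight at least $\min(r_2, \mult p)$, and that the convention $v_{<r_1-r_2} = 1$ when $r_1 = r_2$ is threaded through consistently in both the isomorphism and the blow-up arguments. All the analytic content — convergence, the $cA_n$-type hypothesis, and the weight $r_1 + r_2$ of the reduced equation — is inherited from \cref{thm:wei cAn p v}, so no additional singularity-theoretic input is required.
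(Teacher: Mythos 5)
Your proposal is correct --- it establishes both isomorphism claims and both blowup claims --- but the blowup step takes a genuinely different route from the paper's. The paper's proof is two sentences: $\mb V(J)$ is identified with the six-variable model $\mb V(I)$ of \cref{thm:wei cAn p v} by the projection forgetting $x_1$, whose inverse inserts $x_1 \mapsto \alpha - (\beta - p_{<r_2} + p_{<r_1})v_{<r_1-r_2}$, and this insertion \emph{is} weight-respecting: rewriting $x_2 + p_{<r_1}$ as $\beta + (p_{<r_1}-p_{<r_2})$ via the generator $-\beta + x_2 + p_{<r_2}$ replaces $x_2$ (of weight $\min(r_2,\mult p)$, possibly less than $r_2$) by $\beta$ plus the part of $p$ of weight in $[r_2, r_1)$, so the component has weight at least $r_2 = \wt(x_1)$. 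Thus the obstacle you flagged --- that the $x_1$-insertion ``need not be weight-respecting'' --- disappears after choosing this representative, which the definition of weight-respecting permits (it only asks that the components \emph{can} be so written); then \cref{thm:wei weight-respecting implies equiv blup} plus \cref{thm:wei cAn p v}, used as a black box, finish, and $\mb V(J')$ is handled by the further weight-respecting change $x_1 \mapsto \alpha - \beta$ identifying it with $\mb V(J)$. You instead project away $x_2$ (inserting $x_2 = \beta - p_{<r_2}$, of weight at least $\min(r_2,\mult p)$), land on the four-variable hypersurface $\mb V(f \circ \psi)$ from the \emph{proof} of \cref{thm:wei cAn p v}, and conclude from \cref{eqn:wei affine hyp r1,eqn:wei affine hyp r2} and \cref{thm:wei cAn wt correct implies Kawakita blup}; for $J'$ you redo the weight count using the cancellation $x_2 + p = \beta + p_{\geq r_2}$. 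This is sound, with one small point worth making explicit in the $J'$ case: the restricted function has weight \emph{exactly} $r_1 + r_2$, not merely at least, because $-x_1^2$ is the only term involving $x_1$ and so cannot cancel against the terms in $\beta, x_3, x_4$. In sum, the paper's argument is shorter because it reuses the lemma's statement together with one judicious choice of power-series representative; yours reopens the lemma's internals but avoids that subtlety entirely, and it makes transparent why $r_1 = r_2$ is precisely what lets $J'$ keep $f$ unchanged.
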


\begin{proof}
The isomorphism between $\mathbb V(I)$ and $\mathbb V(J)$ is a projection, with inverse given by $x_1 \mapsto \alpha - (\beta - p_{<r_2} + p_{<r_1})v_{<r_1-r_2}$, which is weight-respecting. If $r_1 = r_2$, the isomorphism between $\mathbb V(J)$ and $\mathbb V(J')$ is given by $x_1 \mapsto \alpha - \beta$, which is weight-respecting.
\end{proof}

The power series $p$, $v$, $q$, $w$ can be expressed in terms of the coefficients of $F$ using the explicit splitting lemma, \cref{thm:con splitting explicit}.

\section{Birational models of sextic double solids} \label{sec:mod}

In this section, we prove \cref{mai:mod} on birational non-rigidity of certain sextic double solids. First, we give the generality conditions that we use.

\begin{definition} \label{def:mod generality conds}
Let $X$ be a sextic double solid in \cref{nota:con}. Let $\mathbb P(1, 1, 3)$ have variables $y, z, w$ and let $\mathbb P^1$ have variables $y, z$. Define the following generality conditions, depending on the family that $X$ lies in:
\begin{enumerate}[leftmargin=4em]
\item[{\hyperref[subsec:mod cA4]{$(4)$}}] $\mathbb V(2 w a_2 + c_5, w^2 - d_6) \subseteq \mathbb P(1, 1, 3)$ is $10$ distinct points,
\item[{\hyperref[subsec:mod cA5]{$(5)$}}] $\mathbb V(a_2, -w^2 + d_6) \subseteq \mathbb P(1, 1, 3)$ is $4$ distinct points,
\item[{\hyperref[subsec:mod cA6]{$(6)$}}] $c_4 - 2 a_1 b_3 - a_2 b_2 + 2 a_0 a_2^2 + 6 a_1^2 a_2 \in \mathbb C[y, z]$
is non-zero, and $\mathbb V(a_2) \subseteq \mathbb P^1$ is two distinct points, and for both of these points~$P$, one of $b_3(P)$, $c_4(P)$ or $d_5(P)$ is non-zero,
\item[{\hyperref[subsec:mod cA7-1]{$(7.1)$}}] $\mathbb V(-e_2 + 4 a_0 r_2 + b_2 - 6 a_1^2) \subseteq \mathbb P^1$
is two distinct points,
\item[{\hyperref[subsec:mod cA7-2]{$(7.2)$}}] $r_1$ and $q_1$ are coprime in~$\mathbb C[y, z]$,
\item[{\hyperref[subsec:mod cA7-3]{$(7.3)$}}] $q_2 \in \mathbb C[y, z]$ is not a square,
\item[{\hyperref[subsec:mod cA8]{$(8)$}}] $a_0 \neq A_0$.
\end{enumerate}
\end{definition}

\begin{maintheorem} \label{mai:mod}
Every terminal $\mathbb Q$-factorial sextic double solid with a $cA_n$ singularity with $n \geq 4$ that satisfies the generality conditions in \cref{def:mod generality conds} has a Sarkisov link starting with a weighted blowup with centre the $cA_n$ point.
\end{maintheorem}

We treat each of the 7 families separately. We use the notation in \cref{cons:pre,exa:pre cA4 toric link} for the 2-ray links. We write the $cA_4$ case in more detail. Below, when we say that a birational map is $m$ Atiyah flops, then we mean that the base of the flop is $m$ points, above each we are contracting a curve and extracting a curve, and locally analytically above each of the points it is an Atiyah flop (see \cite[Section~1.3]{Rei92} for the Atiyah flop). Similarly for flips. Below, for a morphism $\Phi\colon T_0 \to \mathbb P$, we let $\Phi^*\colon \operatorname{Cox} \mathbb P \to \operatorname{Cox} T_0$ denote a corresponding $\mathbb C$-algebra homomorphism of Cox rings (this is described explicitly in the proof of \cref{thm:mod cA4}).

\subsection{\texorpdfstring
{Singularities after divisorial contraction}
{Singularities after divisorial contraction}}

The non-Gorenstein singularities on $Y$ for an ordinary type divisorial contraction $Y \to X$ with centre a $cA_n$ singularity can be easily found using the result by Kawakita, \cref{thm:pre Kawakita blowup}. On the other hand, the structure or the number of Gorenstein singularities is unclear. We show in \cref{thm:mod Y only up to two quot sing} that if $X$ in one of the 11 families is general, then $Y$ has no Gorenstein singularities. We do not give the generality conditions of \cref{thm:mod Y only up to two quot sing} explicitly. We do not need \cref{thm:mod Y only up to two quot sing} for proving \cref{mai:mod}.

\begin{lemma} \label{thm:mod elementary}
Let $a, b \in \mathbb C[y, z]$ be non-zero homogeneous polynomials with $\deg a \geq \deg b$ such that for every homogeneous polynomial $c \in \mathbb C[y, z]$ of degree $\deg a - \deg b$, the polynomial $a + bc$ is divisible by the square of a linear form. Then $a$ and $b$ are both divisible by the square of the same linear form.
\end{lemma}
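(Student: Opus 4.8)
The plan is to strip off the common part of $a$ and $b$ and reduce to a statement about a pencil. Write $g = \gcd(a,b)$ and factor $a = g a_1$, $b = g b_1$ with $\gcd(a_1, b_1) = 1$, so that $a + bc = g\,(a_1 + b_1 c)$, where each factor $a_1 + b_1 c$ is homogeneous of the fixed degree $\deg a - \deg g$. The desired conclusion — that $a$ and $b$ are both divisible by the square of a common linear form — is exactly the statement that $g$ is divisible by the square of a linear form, i.e. that $g$ is \emph{not} squarefree. So I would argue by contradiction: assuming $g$ is squarefree, I would exhibit a single admissible $c$ for which $a + bc$ is squarefree, contradicting the hypothesis.

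The core tool I would isolate is the following sub-lemma: if $u, v \in ℂ[y,z]$ are coprime homogeneous forms of the same positive degree, then $u + \lambda v$ is squarefree for all but finitely many $\lambda \in ℂ$. The proof is that coprimality makes $[u : v] \co \mathbb{P}^1 \to \mathbb{P}^1$ a finite morphism of degree $\deg u$, and $u + \lambda v$ acquires a repeated root precisely at a ramification point lying over $[-\lambda : 1]$; since a finite self-map of $\mathbb{P}^1$ has only finitely many branch points (Riemann--Hurwitz), only finitely many $\lambda$ can fail.

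With this in hand I would set $e = \deg a - \deg b$ and choose a linear form $\ell$ dividing neither $a_1$ nor $g$ (possible, as each has only finitely many linear factors). Applying the sub-lemma to the coprime pair $u = a_1$, $v = \ell^e b_1$ of equal degree shows that $a_1 + \lambda \ell^e b_1$ is squarefree for generic $\lambda$. It is also coprime to $g$ for generic $\lambda$: at each of the finitely many roots $P_i$ of $g$ — none a root of $\ell$ — the vanishing of $a_1 + \lambda \ell^e b_1$ pins down at most one value of $\lambda$. Choosing $\lambda$ outside this finite bad set and taking $c = \lambda \ell^e$, the product $a + bc = g\,(a_1 + \lambda \ell^e b_1)$ is a product of two coprime squarefree forms, hence squarefree of degree $\deg a \geq 1$, and so is not divisible by the square of any linear form; this is the required contradiction, forcing $g$ to be non-squarefree.

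The main obstacle is the sub-lemma on generic squarefreeness of a base-point-free pencil; once that is granted, the remainder is routine bookkeeping with greatest common divisors and the observation that vanishing at finitely many points excludes only finitely many parameters. The one point needing a separate remark is the degenerate range $\deg a \leq 1$, or $\deg a = \deg b$ with $a$ and $b$ proportional: there $e = 0$, so $a + bc$ is a scalar multiple of $g$ for each constant $c$, and the hypothesis directly forces $g$ to be non-squarefree (or the statement is vacuous for degree reasons), so these cases are handled without the sub-lemma.
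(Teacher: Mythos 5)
Your proof is correct, and it rests on the same finiteness fact as the paper's --- a nonconstant map $\mathbb{P}^1 \to \mathbb{P}^1$ has only finitely many ramification (critical) points --- but the argument is organized genuinely differently. The paper dehomogenizes: it reduces the lemma to the claim that for nonzero $f, g \in \mathbb{C}[x]$, if $f + \lambda g$ has a repeated root for \emph{every} $\lambda \in \mathbb{C}$, then $f$ and $g$ have a common repeated root; this is proved by pigeonhole, since across all $\lambda$ the possible repeated roots lie in one fixed finite set (critical points of $f/g$ together with common roots of $f$ and $g$), so a single point must be a repeated root of $f + \lambda g$ for infinitely many $\lambda$, and subtracting two members of the pencil makes it a common repeated root. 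You instead stay homogeneous, strip off $\gcd(a,b)$ so that the pencil $a_1 + \lambda \ell^{e} b_1$ is base-point free, and argue by contradiction using generic squarefreeness of such a pencil. Each route buys something: yours avoids the point-at-infinity issue that the paper's dehomogenization silently glosses over (for a given $\lambda$ the square linear factor could a priori be $z^2$, which is not a repeated root of the dehomogenized $f + \lambda g$), and the gcd reduction makes the target statement (``$\gcd(a,b)$ is not squarefree'') transparent; the paper's version is much shorter and produces the common repeated root directly, at the price of terseness about the base locus and the proportional case. One small inaccuracy in your write-up: the degenerate range is misdescribed --- for $\deg a = 1$, $\deg b = 0$ one has $e = 1$, not $0$ --- but this is immaterial, since the only situation where your sub-lemma is genuinely unavailable is $\deg a_1 = 0$, i.e.\ $a$ and $b$ proportional, which you handle, and the remaining low-degree cases are vacuous or are in fact covered by your main argument.
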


\begin{proof}
Suffices to prove that for polynomials $f, g \in \mathbb C[x]$, if $f + \lambda g$ has a repeated root for infinitely $\lambda \in \mathbb C$, then $f$ and $g$ have a common repeated root. Dividing $f$ and $g$ by suitable linear polynomials, it suffices to consider the case where every common root of $f$ and $g$ is a common repeated root of $f$ and~$g$.

If the set
\[
A = \mleft\{ \alpha \in \mathbb C \;\middle|\; \text{$\alpha$ is a repeated root of $f + \lambda_\alpha g$ for some $\lambda_\alpha \in \mathbb C$} \mright\}
\]
is finite, then there exist $\alpha \in \mathbb C$ and $\lambda_1 \neq \lambda_2$ such that $\alpha$ is a repeated root of both $f + \lambda_1 g$ and $f + \lambda_2 g$. It follows that $\alpha$ is a repeated root of both $f$ and $g$.

Without loss of generality, both $f$ and $g$ are non-constant. Subtracting
$g \cdot \dfrac{\mathrm d (f + \lambda g)}{\mathrm d x}$
from
$\dfrac{\mathrm d g}{\mathrm d x} \cdot (f + \lambda g)$,
we find that a repeated root of $f + \lambda g$ is necessarily a root of
$f \dfrac{\mathrm d g}{\mathrm d x} - g \dfrac{\mathrm d f}{\mathrm d x}$.
If $f \dfrac{\mathrm d g}{\mathrm d x} = g \dfrac{\mathrm d f}{\mathrm d x}$, then a prime factor of $g$ is a prime factor of~$f$. If
$f \dfrac{\mathrm d g}{\mathrm d x} \neq g \dfrac{\mathrm d f}{\mathrm d x}$,
then the set $A$ is finite. In both cases, $f$ and $g$ have a common repeated root.
\end{proof}

\begin{proposition} \label{thm:mod Y only up to two quot sing}
Let $X$ be a member of family $k \in \mathrm{Inds}$ of \cref{nota:con} which is smooth outside a $cA_{\lfloor k\rfloor}$ singularity at $P_x = [1, 0, 0, 0, 0]$. Let $Y \to X$ be a divisorial contraction with centre~$P_x$, which is a $(r_1, r_2, 1, 1)$-Kawakita blowup. If $X$ is general, then $Y$ has a quotient singularity $1/r_1(1, 1, r_1 - 1)$ if $r_1 > 1$ and a quotient singularity $1/r_2(1, 1, r_2 - 1)$ if $r_2 > 1$ and is smooth elsewhere.
\end{proposition}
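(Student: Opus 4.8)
The plan is to reduce to the local normal form of a Kawakita blowup and then work chart by chart on the weighted blowup, using the fact that a divisorial contraction is an isomorphism away from its exceptional divisor to confine all singularities of $Y_0$ to the exceptional locus. First I would localize: by \cref{thm:pre Kawakita blowup} and the type-$1$ hypothesis, $\varphi\co Y_0 \to X$ is locally analytically the $(r_1, r_2, 1, 1)$-blowup of $\mb V(x_1 x_2 + g(x_3, x_4)) \subseteq ℂ^4$ at the origin, where $g$ has multiplicity $n + 1 = r_1 + r_2$ with leading binary form $g_{n+1}$. Realizing this weighted blowup as the toric variety obtained by star-subdividing the positive orthant at the ray $(r_1, r_2, 1, 1)$, the ambient blowup has cyclic quotient singularities of orders $r_1$ and $r_2$ at the torus-fixed points $[1:0:0:0]$ and $[0:1:0:0]$ of the exceptional divisor $E \cong ℙ(r_1, r_2, 1, 1)$, and is smooth elsewhere because $x_3$ and $x_4$ have weight~$1$. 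Since $\varphi$ restricts to an isomorphism $Y_0 \setminus E \to X \setminus \{P\}$ and a general $X$ is smooth away from $P$ by \cref{mai:con}, every singular point of $Y_0$ lies on $E$, so it suffices to examine the four affine charts along $E$.

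In the $x_1$-chart I set $x_1 = 1$; the total transform of $x_1 x_2 + g$ is $u^{r_1 + r_2}\bigl(x_2 + \tilde g(u, x_3, x_4)\bigr)$ with $\tilde g = \sum_{d \geq 0} u^d g_{n+1+d}$, so the strict transform is the graph $\mb V(x_2 + \tilde g)$, smooth upstairs and isomorphic to $ℂ^3$ with coordinates $(u, x_3, x_4)$. The residual $\mu_{r_1}$-action is $\zeta \cdot (u, x_3, x_4) = (\zeta^{-1} u, \zeta x_3, \zeta x_4)$, and since $n + 1 \equiv r_2 \pmod{r_1}$ this graph is invariant; the quotient is the isolated singularity $\frac{1}{r_1}(r_1 - 1, 1, 1) = \frac{1}{r_1}(1, 1, r_1 - 1)$ at the vertex $[1:0:0:0]$, which is a genuine singular point exactly when $r_1 > 1$. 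The $x_2$-chart is symmetric and contributes $\frac{1}{r_2}(1, 1, r_2 - 1)$.

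It then remains to prove that $Y_0$ is smooth in the ambient-smooth $x_3$- and $x_4$-charts. In the $x_3$-chart the strict transform is $\mb V(x_1 x_2 + \bar g(u, x_4))$ with $\bar g = \sum_{d \geq 0} u^d g_{n+1+d}(1, x_4)$, and the Jacobian criterion forces any singular point to have $x_1 = x_2 = 0$ and, along $E = \mb V(u)$, to satisfy $g_{n+1}(1, x_4) = \partial_{x_4} g_{n+1}(1, x_4) = 0$ (the further condition $g_{n+2}(1,x_4) = 0$ is then superfluous). Hence this chart is smooth precisely when $g_{n+1}$ is squarefree, and likewise for the $x_4$-chart. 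Finally, the stratum $x_3 = x_4 = 0$ of $E$ meets $Y_0$ only where $x_1 x_2 = 0$, that is, in the two vertices already identified, so no further singularities appear and the list is complete.

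The main obstacle is the genericity input: one must show that for a general member the leading binary form $g_{n+1}$, equal to the form $h_{n+1}$ produced by the splitting lemma in the proof of \cref{mai:con}, is squarefree despite being constrained by the $cA_n$ conditions. I would write $h_{n+1}$ in the shape $a + bc$ with $c$ ranging over a free family parameter and invoke \cref{thm:mod elementary}: were $h_{n+1}$ divisible by the square of a linear form for every value of the parameter, the fixed forms $a$ and $b$ would share a squared linear factor, which one verifies fails for the families at hand. This yields squarefreeness on a dense open subset, completing the proof that $Y_0$ carries exactly the two stated quotient singularities and is smooth elsewhere.
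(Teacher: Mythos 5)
Your proposal is correct and takes essentially the same route as the paper's proof: reduce to the local analytic Kawakita normal form $x_1x_2+g(x_3,x_4)$, analyse the $(r_1,r_2,1,1)$-weighted blowup of it (you chart by chart, the paper via a single global $ℂ^*$-quotient and its Jacobian), identify the two cyclic quotient points $\frac{1}{r_i}(1,1,r_i-1)$ at the vertices of the exceptional divisor, and reduce smoothness everywhere else to squarefreeness of the leading form $h_{n+1}=g_{n+1}$, established for general $X$ family by family using \cref{thm:mod elementary} together with the explicit splitting lemma. The differences are purely presentational.
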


\begin{proof}
We show that $Y$ has only up to two quotient singularities on the exceptional divisor and is smooth elsewhere. Since $Y \to X$ is a $(r_1, r_2, 1, 1)$-Kawakita blowup, we can consider the local analytic coordinate system around $P_x$ where $X$ is given by $wt + h(y, z)$ where $h \in \mathbb C\{y, z\}$ has multiplicity~$n+1$. The variety $Y$ is locally analytically around the exceptional divisor given by $wt + \frac{1}{u^{n+1}}h(uy, uz)$ inside the geometric quotient $(\mathbb C^5 \setminus \mathbb V(w, t, y, z))/\mathbb C^*$ where the $\mathbb C^*$-action is given by $\lambda \cdot (u, w, t, y, z) = (\lambda^{-1} u, \lambda^{r_1} w, \lambda^{r_2} t, \lambda y, \lambda z)$. The singular locus of $Y$ is given by
\[
\operatorname{Sing} Y = \mathbb V\mleft( u, w, t, h_{n+1}, \frac{\partial h_{n+1}}{\partial y}, \frac{\partial h_{n+1}}{\partial z}, h_{n+2} \mright) \cup \{P_w\}_{\text{if $r_1 > 1$}} \cup \{P_t\}_{\text{if $r_2 > 1$}},
\]
where $h_i$ denotes the homogeneous degree $i$ part of~$h$, and $P_w$ and $P_t$ are the points $[0, 1, 0, 0, 0]$ and $[0, 0, 1, 0, 0]$, respectively. We see that $Y$ is singular outside of $P_w$ and $P_t$ if and only if there exists a homogeneous linear form $L \in \mathbb C[y, z]$ such that $L^2$ divides $h_{n+1}$ and $L$ divides $h_{n+2}$. For all $k \in \mathrm{Inds}$, exactly one of the following holds:
\begin{itemize}
\item $Y \setminus \{P_w, P_t\}$ is smooth for a general $X$ in family~$k$, or
\item for all $X$ in family~$k$, there exists a homogeneous linear form $L \in \mathbb C[y, z]$ such that $L^2$ divides $h_{n+1}$ and $L$ divides $h_{n+2}$.
\end{itemize}

We write the proof for the family 8 in detail, the proofs for the other 10 families are similar. Using the explicit splitting lemma (\cref{thm:con splitting explicit}), we compute that
\[
h_9 = Q - 2 d_3 r_2^3 = 8 (a_0 - A_0) s_3^3 + r_2 R,
\]
where $Q, R \in \mathbb C[y, z]$ are homogeneous of degrees $9$ and $7$ respectively, and $Q$ does not contain the polynomial~$d_3$. Assume that for all $X$ in family~8, there exists $L$ such that $L^2$ divides~$h_9$. Using \cref{thm:mod elementary} with $(a, b, c) = (Q,\, r_2^3,\, -2 d_3)$, we find that a prime divisor of $r_2$ divides~$h_9$. Therefore, a general member $X$ of family~8 satisfies that $r_2$ and $s_3$ have a common prime divisor, contradicting \cref{mai:con} \cref{itm:con theo general smooth} and \cref{thm:con cA7 no common prime divisor}. So, for a general $X$ in family~8, $Y \setminus \{P_w, P_t\}$ is smooth.
\end{proof}

\subsection{\texorpdfstring
{$cA_4$ model}
{cA4 model}} \label{subsec:mod cA4}

Note that Okada described a Sarkisov link starting from a general complete intersection $Z_{5, 6} \subseteq \mathbb P(1, 1, 1, 2, 3, 4)$ to a sextic double solid (see entry No.~9 of the table in \cite[Section~9]{Oka14}). We show the converse:

\begin{proposition} \label{thm:mod cA4}
A sextic double solid with a $cA_4$ singularity satisfying \cref{def:mod generality conds} has a Sarkisov link to a complete intersection $Z_{5, 6} \subseteq \mathbb P(1, 1, 1, 2, 3, 4)$, starting with a $(3, 2, 1, 1)$-blowup of the $cA_4$ point, then $10$ Atiyah flops, and finally a Kawamata divisorial contraction (see \cite{Kaw96}) to a terminal quotient $1/4(1, 1, 3)$ point. Under further generality conditions (\cref{thm:mod Y only up to two quot sing}), $Z$ is quasismooth.
\end{proposition}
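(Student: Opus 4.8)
The plan is to realize the claimed Sarkisov link as the restriction of the toric 2-ray link of \cref{exa:pre cA4 toric link} to the sextic double solid $X$, following the strategy of \cref{cons:pre}. First I would construct the divisorial contraction $\varphi\co Y_0 → X$ as the $(3, 2, 1, 1)$-Kawakita blowup of the $cA_4$ point $P_x$. Since for $cA_4$ we have $n + 1 = 5 = r_1 + r_2$ with $a = 1$ coprime to both $r_1 = 3$ and $r_2 = 2$, the converse part of \cref{thm:pre Kawakita blowup} guarantees that a weighted blowup with the correct weight of the local defining series is a divisorial contraction, and \cref{thm:wei SDS cAn p} produces explicit global equations for $Y_0$ on the affine patch $X_x$, expressed through the splitting-lemma data $p, v$ of \cref{thm:ana splitting explicit}.

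The next step, and the technical heart of the construction, is to embed $Y_0$ into the toric variety $T_0$ over $ℙ(1, 1, 1, 1, 3, 5)$ of \cref{exa:pre cA4 toric link}. The raw blowup lives in the blown-up toric variety over $ℙ(1^4, 3)$, but there its defining ideal does not 2-ray follow the ambient space; I would use unprojection (\cite{PR04}, \cite{Rei00}) to introduce the weight-$5$ variable $\xi$ and re-embed $Y_0$ as a codimension-$2$ complete intersection $\mb V(I_Y) ⊆ T_0$ cut out by equations of bidegrees corresponding to the original degrees $5$ and $6$. I would then verify that $I_Y$ 2-ray follows $T_0$ in the sense of \cref{cons:pre}, so that the toric 2-ray link $T_0 \dashrightarrow T_1 → ℙ(1^3, 2, 3, 4)$ restricts termwise to a 2-ray link $X \gets Y_0 \dashrightarrow Y_1 → Z$.

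With the embedding in hand, I would analyze each step of the restricted link. The toric flop $T_0 \dashrightarrow T_1$ over $T_{\mf X_0}$ contracts $\mb V(\xi, t)$ to the surface $ℙ(1, 1, 3) ⊆ T_{\mf X_0}$; restricting the two equations of $Y_0$ to this $ℙ(1, 1, 3)$, whose coordinates $y, z, \alpha$ have degrees $1, 1, 3$ with $\alpha$ playing the role of $w$, yields exactly the locus $\mb V(2 w a_2 + c_5,\ w^2 - d_6)$ of \cref{cond:mod}, a degree-$5$ and a degree-$6$ curve whose intersection number on $ℙ(1, 1, 3)$ is $5 \cdot 6 / 3 = 10$. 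The generality hypothesis that these are $10$ distinct points ensures the flopping locus is $10$ disjoint curves, and a local computation near each point shows the defining equation has the standard $x_1 x_2 - x_3 x_4$ form, so each is an Atiyah flop. For the final morphism, I would compute the ample model of $\mb V(\xi)$ on $T_1$, landing in $ℙ(1, 1, 1, 2, 3, 4)$, and identify $Z = \psi(Y_1)$ as the complete intersection $Z_{5, 6}$; the morphism $\psi$ contracts the strict transform of the exceptional divisor onto a cyclic quotient singularity $\frac14(1, 1, 3)$, which is the Kawamata divisorial contraction of \cite{Kaw96} with weights $(\frac14, \frac14, \frac34)$.

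Finally, I would check the conditions that upgrade the 2-ray link to a genuine Sarkisov link of type II (\cref{def:Sarkisov link,cons:pre}): that every intermediate variety is terminal, the only non-Gorenstein points being the quotient singularities coming from $\varphi$ and $\psi$; that $-K_{Y_0}$ lies in the interior of $\mo{Mov}(Y_0)$, so the link does not degenerate into a bad link; and that $Z$ is a terminal Fano $3$-fold. Quasismoothness of $Z$ under the additional generality of \cref{thm:mod Y only up to two quot sing} would follow by transporting the smoothness of $Y_0$ away from its two quotient points through the flop and the contraction. The main obstacle I anticipate is the unprojection and 2-ray-following step: obtaining a clean complete-intersection presentation of $Y_0$ in $T_0$ for which the toric variation of GIT restricts exactly, since this is precisely what makes every subsequent identification — the $10$ Atiyah flops and the Kawamata contraction — fall out of the toric combinatorics rather than requiring separate ad hoc arguments.
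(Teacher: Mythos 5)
Your proposal takes essentially the same route as the paper's proof: construct the $(3,2,1,1)$-Kawakita blowup via \cref{thm:wei SDS cAn p} and \cref{thm:wei cAn wt correct implies Kawakita blup}, unproject to re-embed everything as a bidegree $(5,6)$ complete intersection in the toric variety $T_0$ of \cref{exa:pre cA4 toric link} so that the ideal 2-ray follows, identify the flop as $10$ Atiyah flops over the base $\mb V(2\alpha a_2 + c_5,\ \alpha^2 - d_6) \subseteq ℙ(1,1,3)$ guaranteed by \cref{cond:mod}, and finish with the Kawamata contraction of $\mb V(t)$ to the $1/4(1,1,3)$ point of $Z_{5,6} ⊆ ℙ(1,1,1,2,3,4)$. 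The only differences are cosmetic: the paper performs the unprojection on $X$ itself (embedding it in $ℙ(1^4,3,5)$ before blowing up) and verifies 2-ray following by running the game step by step, while your intersection-number check $5 \cdot 6/3 = 10$ is a pleasant supplement to the paper's direct appeal to the generality condition.
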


\begin{proof}
We exhibit the diagram below.
\begin{equation*}
\begin{tikzcd}[column sep = small]
& \arrow[ld, "{(3, 2, 1, 1)}"'] Y_0 \arrow[rd, ""'] \arrow[rr, dashed, "{10 \times (1, 1, -1, -1)}"] & & Y_1 \arrow[ld, ""'] \arrow[rd, "{(\frac14, \frac14, \frac34)}"] \\
cA_4 \in X \subseteq \mathbb P(1^4, 3) \hspace{-3em} & & W_0 & & \hspace{-2em} 1/4(1, 1, 3) \in Z_{5, 6} \subseteq \mathbb P(1^3, 2, 3, 4)
\end{tikzcd}
\end{equation*}
The corresponding diagram for the ambient toric spaces is given in detail in \cref{exa:pre cA4 toric link}.

By \cref{mai:con}, every sextic double solid $\hat X$ with an isolated $cA_4$ singularity can be given by
\[
\hat X\colon \mathbb V(\hat f) \subseteq \mathbb P(1, 1, 1, 1, 3)
\]
with variables $x, y, z, t, w$ where
\[
\hat f = -w^2 + x^4 t^2 + 2 x^3 t a_2 + x^3 t^2 A_1 + x^2 a_2^2 + x^2 t B_3 + x C_5 + D_6,
\]
where $a_2 \in \mathbb C[y, z]$ is homogeneous of degree~$2$, and $A_i, B_i, C_i, D_i \in \mathbb C[y, z, t]$ are homogeneous of degree~$i$.

Below, we perform the following constructions:
\begin{enumerate}
\item \label{itm:mod cA4 step P} we define a weighted projective space $\mathbb P = \mathbb P(1, 1, 1, 1, 3, 5)$,
\item \label{itm:mod cA4 step X} we define a subvariety $X$ of $\mathbb P$ by explicitly describing a homogeneous ideal,
\item we show that $X$ and $\hat X$ are isomorphic by constructing an explicit isomorphism,
\item \label{itm:mod cA4 step T0} we construct a toric variety $T_0$,
\item we define a morphism $\Phi\colon T_0 \to \mathbb P$,
\item \label{itm:mod cA4 step Y0} we construct a subvariety $Y_0$ of $T_0$ by explicitly describing a bihomogeneous ideal $I_Y$ of the Cox ring of~$T_0$,
\item we restrict the morphism $\Phi$ to $Y_0$ and check that its image is~$X$.
\end{enumerate}
Although computational, the above steps are completely elementary. The reason for these constructions is that, as we prove below, the morphism $Y_0 \to X$ is the $(3, 2, 1, 1)$-Kawakita blowup and $I_Y$ 2-ray follows~$T_0$.

The reader might have the philosophical question of how the author found the varieties $\mathbb P, X, T_0$ and $Y_0$, described below, and why they are defined exactly as they are. In \cref{thm:mod cA4 finding X}, we describe the methods we used to arrive at the construction of $\mathbb P, X, T_0$ and $Y_0$. Note that the choices involved in \labelcref{itm:mod cA4 step P,itm:mod cA4 step X,itm:mod cA4 step T0,itm:mod cA4 step Y0} above are somewhat arbitrary. Namely, there exist other varieties $\mathbb P, X, T_0$ and $Y_0$ such that $Y_0 \to X$ is the $(3, 2, 1, 1)$-Kawakita blowup and $I_Y$ 2-ray follows~$T_0$.

We start by constructing~$X$. Define the bidegree $(5, 6)$ complete intersection $X$, isomorphic to~$\hat X$, by
\[
X\colon \mathbb V(f, -x \xi + \alpha^2 - D_6) \subseteq \mathbb P(1, 1, 1, 1, 3, 5)
\]
with variables $x, y, z, t, \alpha, \xi$, where
\[
f = -\xi + 2 \alpha a_2 + 2 \alpha x t + x^2 t^2 A_1 + x t B_3 + C_5.
\]
The isomorphism is given by
\[
\begin{aligned}
\hat X & \to X\\
[x, y, z, t, w] & \mapsto \mleft[ x, y, z, t, \alpha', 2 \alpha' a_2 + 2 \alpha' x t + x^2 t^2 A_1 + x t B_3 + C_5 \mright]
\end{aligned}
\]
where $\alpha' = w + x^2 t + x a_2$,
with inverse
\[
\mleft[ x, y, z, t, \alpha, \xi \mright] \mapsto [x, y, z, t, \alpha - x^2 t - x a_2].
\]

We describe the divisorial contraction $\varphi\colon Y_0 \to X$. Define the toric variety
\[
\begin{array}{ccc|ccccccccc}
            &  u & x & y & z & \alpha & \xi & t &\\
\Ta{T_0\colon} &  0 & 1 & 1 & 1 & 3 & 5 & 1 & \Tb{,}\\
            & -1 & 0 & 1 & 1 & 3 & 6 & 2 &
\end{array}
\]
as in \cref{exa:pre cA4 toric link}. Let $\Phi$ be the ample model of $\mathbb V(x)$, that is,
\[
\begin{aligned}
\Phi\colon T_0 & \to \mathbb P(1, 1, 1, 1, 3, 5)\\
[u, x, y, z, \alpha, \xi, t] & \mapsto [x, uy, uz, u^2t, u^3\alpha, u^6\xi].
\end{aligned}
\]
Let $Y_0$ be the strict transform of~$X$. Let $\Phi^*$ denote the corresponding $\mathbb C$-algebra homomorphism, namely
\begin{gather*}
\Phi^*\colon \mathbb C[x, y, z, t, \alpha, \xi] \to \mathbb C[u, x, y, z, \alpha, \xi, t]\\*
\Phi^*\colon x \mapsto x,\ y \mapsto uy,\ z \mapsto uz,\ t \mapsto u^2 t,\ \alpha \mapsto u^3 \alpha,\ \xi \mapsto u^6 \xi.
\end{gather*}
Define
\[
A_Y = A_1(y, z, ut), \qquad B_Y = B_3(y, z, ut), \qquad C_Y = C_5(y, z, ut), \qquad D_Y = D_6(y, z, ut)
\]
and define the polynomial $g = \Phi^*f / u^5$, that is,
\[
\begin{aligned}
g = -u \xi + 2 \alpha a_2 + 2 \alpha x t + x^2 t^2 A_Y + x t B_Y + C_Y.
\end{aligned}
\]
Then, $Y_0$ is given by
\[
Y_0\colon \mathbb V(I_Y) \subseteq T_0 \ \text{ where } \ I_Y = (g,\, -x \xi + \alpha^2 - D_Y).
\]
We will see later that $I_Y$ 2-ray follows~$T_0$. Note that there exist other ideals that define the same variety $Y_0 \subseteq T_0$ (see \cite[Corollary~3.9]{Cox95}), but where the ideal might not 2-ray follow~$T_0$. Also note that we have not (and do not need to) prove that the ideal $I_Y$ is saturated with respect to~$u$, although in general, saturating might help in finding the ideal that 2-ray follows~$T_0$. The morphism $Y_0 \to X$ is the restriction of $T_0 \to \mathbb P(1, 1, 1, 1, 3, 5)$. Locally, $(Y_0)_x \to X_x$ is the $(3, 2, 1, 1)$-blowup of $\mathbb V(f') \subseteq \mathbb C^4$ with variables $\alpha, t, y, z$, where
\[
f' = -\alpha^2 + 2 \alpha a_2 + 2 \alpha t + t^2 A_1 + t B_3 + C_5 + D_6.
\]
Since $\operatorname{wt} f' = 5$, by \cref{thm:wei cAn wt correct implies Kawakita blup}, $(Y_0)_x \to X_x$ is a $(3, 2, 1, 1)$-Kawakita blowup.

The first diagram in the 2-ray game for $Y_0$ is $10$ Atiyah flops, under \cref{def:mod generality conds}. We describe the diagram $Y_0 \to W_0 \gets Y_1$ globally. Multiplying the action matrix of $T_0$ by the matrix $\smat{1 & 0\\-1 & 1}$, define
\[
\begin{array}{cccccc|ccc}
            &  u &  x & y & z & \alpha & \xi & t & \\
\Ta{T_1\colon} &  0 &  1 & 1 & 1 & 3 & 5 & 1 & \Tb{.}\\
            & -1 & -1 & 0 & 0 & 0 & 1 & 1 &
\end{array}
\]
Define $Y_1$ by $\mathbb V(I_Y) \subseteq T_1$. Define the morphisms $Y_0 \to W_0$ and $Y_1 \to W_0$ as the ample models of $\mathbb V(y)$. The exceptional locus of $Y_0 \to W_0$ is $E_0^- = \mathbb V(\xi, t) \subseteq Y_0$, the exceptional locus of $Y_1 \to W_0$ is $E_1^+ = \mathbb V(u, x) \subseteq Y_1$, and the base of the flop is
\[
\{P_i\} = \mathbb V(2 \alpha a_2 + C_5(y, z, 0),\, \alpha^2 - D_6(y, z, 0)) \subseteq \mathbb P(1, 1, 3) \subseteq W_0,
\]
where $\mathbb P(1, 1, 3)$ has variables $y, z, \alpha$. If $a_2$, $C_5(y, z, 0)$ and $D_5(y, z, 0)$ are general enough, that is, if \cref{def:mod generality conds} is satisfied, then the base of the flop is $10$ points~$\{P_i\}_{1\leq i\leq10}$, and both $E_0^-$ and $E_1^+$ are $10$ disjoint curves mapping to~$\{P_i\}_{1\leq i\leq10}$.

We show that locally analytically, the diagram $Y_0 \to W_0 \gets Y_1$ is $10$ Atiyah flops. Let $P \in W_0$ be any point in the base of the flop. Then, $P$ has $y$ or $z$ coordinate non-zero. We consider the case where the $y$-coordinate is non-zero, the other case is similar. Since the base of the flop is $10$ points, the point $P$ is smooth in $\mathbb P(1, 1, 3)$. By the implicit function theorem, we can locally analytically equivariantly express $\alpha$ and $z$ in terms of the variables $u, x, \xi, t$ on the patches $(Y_0)_y$, $(W_0)_y$ and~$(Y_1)_y$. So, the flop $Y_0 \to W_0 \gets Y_1$ is locally analytically a $(1, 1, -1, -1)$-flop, the so-called Atiyah flop, around~$P$.

The last morphism $Y_1 \to Z$ in the link for $X$ is a divisorial contraction. Multiplying the action matrix of $T_0$ by the matrix $\smat{6 & -5\\2 & -1}$ with determinant~$4$, we see that
\[
\begin{array}{cccccc|ccc}
              & u & x & y & z & \alpha & \xi &  t & \\
\Ta{T_1\cong} & 5 & 6 & 1 & 1 & 3 & 0 & -4 & \Tb{.}\\
              & 1 & 2 & 1 & 1 & 3 & 4 &  0 &
\end{array}
\]
Let $Y_1 \to Z$ be the ample model of $\frac14 \mathbb V(\xi)$, that is,
\[
\begin{aligned}
Y_1 & \to Z\\
[u, x, y, z, \alpha, \xi, t] & \mapsto \mleft[ t^{\frac54} u, t^{\frac14} y, t^{\frac14} z, t^{\frac32} x, t^{\frac34} \alpha, \xi \mright].
\end{aligned}
\]
Then $Z$ is the bidegree $(5, 6)$ complete intersection
\[
Z\colon \mathbb V(h, -x \xi + \alpha^2 - D_6(y, z, u)) \subseteq \mathbb P(1, 1, 1, 2, 3, 4)
\]
with variables $u, y, z, x, \alpha, \xi$, where the $h$ is given by applying the $\mathbb C$-algebra homomorphism $t \mapsto 1$ to~$g$.
The morphism $Y_1 \to Z$ contracts the exceptional divisor $\mathbb V(t) \subseteq Y_1$ to the point~$P_\xi = [0, 0, 0, 0, 0, 1]$. On the quasiprojective patch~$(Y_1)_\xi$, we can express $u$ and $x$ locally analytically equivariantly in terms of $y, z, \alpha, t$. So, the morphism $Y_1 \to Z$ is locally analytically the Kawamata weighted blowdown (see \cite{Kaw96}) to the terminal quotient singular point $P_\xi$ of type $1/4(1, 1, 3)$.
\end{proof}

\begin{remark} \label{thm:mod cA4 finding X}
We explain below how we found the variety~$X$ used in \cref{thm:mod cA4}. We start with the variety $\hat X$, given by \cref{mai:con}. Note that it is not possible to assign weights to the coordinates of $\mathbb P(1, 1, 1, 1, 3)$ such that the corresponding weighted blowup of $\hat X$ would be a $(3, 2, 1, 1)$-Kawakita blowup. To amend this, we first replace the variety $\hat X$ by a variety $\bar X$ such that choosing the weights appropriately, the weighted blowup of $\bar X$ is the $(3, 2, 1, 1)$-Kawakita blowup. So far the process is algorithmic. Unfortunately, as we see below, the constructed ideal $I_{\bar Y}$ does not 2-ray follow the ambient toric variety~$\bar T_0$. Using the technique known as ``unprojection'', we construct another toric variety $T_0$ and a subvariety~$Y_0$ given by an ideal~$I_Y$. This time we are lucky, since as the proof of \cref{thm:mod cA4} shows, the ideal $I_Y$ 2-ray follows~$T_0$. Note that the variety $Y_0$ has higher codimension in $T_0$ than $\bar Y_0$ had in~$\bar T_0$. We give details below.

We perform the coordinate change $\hat X \to \bar X$ given in \cref{eqn:wei cAn p v - gen} of \cref{thm:wei SDS cAn p}, with $(r_1, r_2, a, 1) = (3, 2, 1, 1)$, $p_2 = a_2$ and $v_0 = 1$. We see that $\hat X$ is isomorphic to
\[
\bar X\colon \mathbb V(\bar f) \subseteq \mathbb P(1, 1, 1, 1, 3)
\]
with variables $x, y, z, t, \alpha$, where
\[
\bar f = \alpha (-\alpha + 2 x^2 t + 2 x a_2) + x^3 t^2 A_1 + x^2 t B_3 + x C_5 + D_6.
\]

We construct a $(3, 2, 1, 1)$-Kawakita blowup $\bar Y_0 \to \bar X$. Define the toric variety $\bar T_0$ by
\[
\begin{array}{ccc|ccccccccc}
                 &  u & x & y & z & \alpha & t &\\
\Ta{\bar T_0\colon} &  0 & 1 & 1 & 1 & 3 & 1 & \Tb{.}\\
                 & -1 & 0 & 1 & 1 & 3 & 2 &
\end{array}
\]
In other words, $\bar T_0$ is given by the geometric quotient
\[
\bar T_0 = \frac{\mathbb C^6 \setminus \mathbb V((u, x) \cap (y, z, \alpha, t))}{(\mathbb C^*)^2}.
\]
Let $\bar{\Phi}$ be the ample model of $\mathbb V(x)$, and let $\bar Y_0 \subseteq \bar T_0$ be the strict transform of~$\bar X$. By \cref{thm:wei SDS cAn p}, $\bar Y_0 \to \bar X$ is a $(3, 2, 1, 1)$-Kawakita blowup. Alternatively, define $\bar Y_0$ by $\mathbb V(\bar g) \subseteq T_0$ where
\[
\bar g = \alpha (-u \alpha + 2 x^2 t + 2 x a_2) + x^3 t^2 A_Y + x^2 t B_Y + x C_Y + u D_Y
\]
and use \cref{thm:wei cAn wt correct implies Kawakita blup} on the patch $(\bar Y_0)_x \to \bar X_x$ to show that $\bar Y_0 \to \bar X$ is a $(3, 2, 1, 1)$-Kawakita blowup, similarly to \cref{thm:mod cA4}.

We show that $I_{\bar Y}$ does not 2-ray follow~$\bar T_0$. We describe the next (and the final) map in the 2-ray game for~$\bar T_0$. Acting by the matrix $\smat{1 & -1\\2 & -1}$, we can write $\bar T_0$ by
\[
\begin{array}{ccc|ccccccccc}
                   & u & x & y & z & \alpha &  t &\\
\Ta{\bar T_0\cong} & 1 & 1 & 0 & 0 & 0 & -1 & \Tb{.}\\
                   & 1 & 2 & 1 & 1 & 3 &  0 &
\end{array}
\]
The ample model of the divisor $\mathbb V(y)$ is the weighted blowup
\[
\begin{aligned}
\bar T_0 & \to \mathbb P(1, 1, 1, 2, 3)\\
[u, x, y, z, \alpha, t] & \mapsto [y, z, ut, xt, \alpha],
\end{aligned}
\]
where the centre is the surface $\mathbb P(1, 1, 3)$ given by $\mathbb V(u, x) \subseteq \mathbb P(1, 1, 1, 2, 3)$ with variables $y, z, u, x, \alpha$. Above every point in $\mathbb P(1, 1, 3)$, the fibre is~$\mathbb P^1$. Define
\[
\bar Z\colon \mathbb V(\bar h) \subseteq \mathbb P(1, 1, 1, 2, 3)
\]
where
\[
\bar h = \alpha (-u \alpha + 2 x^2 + 2 x a_2) + x^3 A_Z + x^2 B_Z + x C_Z + u D_Z,
\]
where
\[
A_Z = A_1(y, z, u), \qquad B_Z = B_3(y, z, u), \qquad C_Z = C_5(y, z, u), \qquad D_Z = D_6(y, z, u).
\]
We show that when restricting the weighted blowup to~$\bar Y_0 \to \bar Z$, the exceptional locus is $1$-dimensional. After restricting to $\bar Y_0$, the exceptional divisor $\mathbb V(t)$ becomes $\mathbb V(t, x(2\alpha a_2 + C_5(y, z, 0)) + u(-\alpha^2 + D_6(y, z, 0)))$. By \cref{def:mod generality conds}, there are exactly $10$ points $P_1, \ldots, P_{10} \in \mathbb P(1, 1, 3) \subseteq \bar Z$ such that $2\alpha a_2 + C_5(y, z, 0)$ and $-\alpha^2 + D_6(y, z, 0)$ have a common solution. Above each of those points, the fibre is $\mathbb P^1$. Above any other point, the fibre is just one point. Therefore, the morphism $\bar Y_0 \to \bar Z$ contracts 10 curves onto 10 points, and is an isomorphism elsewhere. This shows that $\bar Y_0$ does not 2-ray follow~$\bar T_0$, since $\bar Z$ is not $\mathbb Q$-factorial and a 2-ray link ends with either a fibration or a divisorial contraction.

The problem with the previous embedding was that $\bar g$ belonged to the irrelevant ideal $(u, x)$. We ``unproject'' the divisor $\mathbb V(u, x)$, to embed $\bar Y_0$ into a toric variety $T_0$ such that $Y_0$ 2-ray follows~$T_0$. The varieties $Y_0 \subseteq T_0$ are defined as in the proof of \cref{thm:mod cA4}. We see that $\bar Y_0$ is isomorphic to $Y_0$ through the map
\[
[u, x, y, z, \alpha, t] \mapsto \mleft[ u, x, y, z, \alpha, \frac{\alpha^2 - D_Y}{x}, t \mright].
\]
The map is a morphism, since we have the equality
\[
\frac{\alpha^2 - D_Y}{x} = \frac{2 \alpha a_2 + 2 \alpha x t + x^2 t^2 A_Y + x t B_Y + C_Y}{u}
\]
in the field of fractions of $\mathbb C[u, x, y, z, \alpha, t]$, and $x$ or $u$ is non-zero at every point of~$T_0$. For more details on this kind of ``unprojection'', see \cite[Section~2]{Rei00} or \cite[Section~2.3]{PR04}.

The coordinate change $\bar Y_0 \to Y_0$ induces a coordinate change $\bar X \to X$, where $X$ is defined as in the proof of \cref{thm:mod cA4}.
\end{remark}

\subsection{\texorpdfstring
{$cA_5$ model}
{cA5 model}} \label{subsec:mod cA5}

\begin{proposition} \label{thm:mod cA5}
A sextic double solid $X$ which is a Mori fibre space with a $cA_5$ singularity satisfying \cref{def:mod generality conds} has a Sarkisov link to a sextic double solid $Z$ with a $cA_5$ singularity, starting with a $(3, 3, 1, 1)$-blowup of the $cA_5$ point in~$X$, then four Atiyah flops, and finally a $(3, 3, 1, 1)$-blowdown to a $cA_5$ point. If in addition $c_4$ is general after fixing $a_i$, $b_i$ and~$d_6$ in \cref{nota:con}, then $X$ and $Z$ are not isomorphic. Under further generality conditions (\cref{thm:mod Y only up to two quot sing}), both $X$ and $Z$ are smooth outside the $cA_5$ point.
\end{proposition}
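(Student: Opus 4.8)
The plan is to run the construction of \cref{cons:pre} exactly as in the $cA_4$ case (\cref{thm:mod cA4}), the essential new feature being the symmetry $r_1 = r_2 = 3$ of the $(3,3,1,1)$-blowup, which is what forces \emph{both} ends of the 2-ray link to be sextic double solids. First I would use \cref{mai:con} to put $X$ in the normal form of \cref{nota:con} with the $cA_5$ point at $P_x$, so that conditions $2$ to $5$ hold; in particular $\xi_2 = a_3 = 0$, $b_4 = a_2^2$ and $c_5 = 2a_2 b_3 - 4 a_1 a_2^2$. On the chart $X_x$ the quadratic part of $f$ is $-w^2 + t^2$, with $f - (-w^2) \in ℂ[y,z,t]$ free of $w$, so splitting with respect to $t$ (\cref{thm:ana splitting}) gives $p \in ℂ\{y, z\}$ with $\mult p = 2$ and $p_2 = a_2$. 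Since the naive $(3,3,1,1)$-weighting does not give the defining equation weight $r_1 + r_2 = 6$, I would introduce the auxiliary variable $\beta$ of \cref{eqn:wei cAn p v - r1 r2 equal} (the projective version of \cref{thm:wei SDS cAn p} with $(r_1, r_2, a, 1) = (3,3,1,1)$ and $\min(r_2, \mult p) = 2$) to obtain a model on which the blowup is visibly a Kawakita blowup: restricting the corresponding toric weighted blowup $T_0 \to ℙ$ to the strict transform $Y_0$ of $X$ yields the $(3,3,1,1)$-Kawakita blowup of the $cA_5$ point (equivalently one checks the relevant weight is $6$ and applies \cref{thm:wei cAn wt correct implies Kawakita blup}). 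As in the $cA_4$ case I expect the first embedding not to $2$-ray follow $T_0$, so I would unproject to arrange that the Cox ideal $I_Y$ does $2$-ray follow $T_0$.

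Next I would run the $2$-ray game on $T_0$ as in \cref{exa:pre cA4 toric link}. Because $r_1 = r_2$, the action matrix and movable cone are symmetric under interchanging the two weight-$3$ generators, so the game has the shape $Y_0 \to \mf X_0 \gets Y_1 \to Z$ with $\psi\co Y_1 \to Z$ again a $(3,3,1,1)$-blowdown recovering a sextic double solid $Z_6 \subseteq ℙ(1,1,1,1,3)$, the image of the contracted divisor being a $cA_5$ point. I would identify the base of the flop with $\mb V(a_2, -w^2 + d_6) \subseteq ℙ(1,1,3)$ (variables $y, z, w$), which by the $cA_5$ line of \cref{cond:mod} consists of $4$ distinct points; over each of these the exceptional loci of $Y_0 \to \mf X_0$ and $Y_1 \to \mf X_0$ are single curves, and the implicit function theorem lets me eliminate the remaining two coordinates to recognise each as an analytic $(1,1,-1,-1)$-flop, i.e.\ an Atiyah flop.

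To conclude that this is a genuine Sarkisov link of type II, and hence that $X$ is not birationally rigid, I would verify terminality throughout: any Kawakita blowup is a divisorial contraction with $-K_{Y_0}$ relatively ample, by \cref{thm:mod Y only up to two quot sing} the variety $Y_0$ has exactly two $\frac13(1,1,2)$ quotient singularities and is smooth elsewhere, these are preserved by the flops, the $cA_5$ point of $Z$ is terminal, and $-K_{Y_0}$ lies in the interior of the movable cone. The smoothness of $Z$ away from its $cA_5$ point then follows because $Y_0 \dashrightarrow Y_1$ is an isomorphism in codimension $1$, so $Y_1$ carries the same two quotient singularities, and the $(3,3,1,1)$-blowdown contracts the divisor on which they lie without creating new singularities elsewhere; combined with \cref{mai:con} for $X$ this gives the final sentence.

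The genuinely new point, and the step I expect to be the main obstacle, is the non-isomorphism $X \ncong Z$ for general $c_4$. I would prove it by computing the normal-form data $(a_i^Z, b_i^Z, c_i^Z, d_i^Z)$ of $Z$ explicitly from the equation of $Z$ and the two action matrices, showing that the link preserves $a_i$, $b_i$ and $d_6$ but sends $c_4$ to $c_4 + \Delta$ for an explicit $\Delta = \Delta(a_i, b_i, c_4, d_6)$ that is generically nonzero. It then remains to rule out an abstract isomorphism $X \cong Z$, which, sending $cA_5$ point to $cA_5$ point, must be induced by an automorphism of $ℙ(1,1,1,1,3)$ preserving the normal form; the residual subgroup of such automorphisms fixing $a_i$, $b_i$ and $d_6$ acts on the space of admissible $c_4$, and I would check that $c_4 + \Delta$ does not lie in the orbit of $c_4$ for general $c_4$. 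Carrying out this orbit computation, rather than the toric bookkeeping that is parallel to the $cA_4$ argument, is where the real work lies.
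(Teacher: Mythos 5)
Your construction of the link itself is essentially the paper's: put $X$ in the normal form of \cref{mai:con}, introduce $\beta$ via \cref{eqn:wei cAn p v - r1 r2 equal} of \cref{thm:wei SDS cAn p} with $p_2 = a_2$, check the blowup is a $(3,3,1,1)$-Kawakita blowup with \cref{thm:wei cAn wt correct implies Kawakita blup}, identify the flop base $\mb V(a_2, -w^2+d_6) \subseteq \mathbb P(1,1,3)$ (four Atiyah flops under \cref{cond:mod}), and use the symmetry $r_1 = r_2$ to end with a $(3,3,1,1)$-blowdown to a sextic double solid $Z$ with a $cA_5$ point. Two small corrections: in this family no unprojection is needed --- the ideal $(\Phi^* f/u^6,\, -u\beta + xt + a_2)$ already 2-ray follows $T_0$ (unprojection enters only for $cA_4$, $cA_7$ families 1--2, and $cA_8$) --- and the paper deliberately does not invoke \cref{thm:mod Y only up to two quot sing} (which carries its own unstated generality hypotheses); terminality of $Y_0$ and $Y_1$ already follows from Kawakita's theorem and the fact that flops preserve terminality.

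The genuine gap is the non-isomorphism $X \ncong Z$, which you correctly identify as the crux but leave as a plan resting on two unverified claims. First, the assertion that the link preserves $(a_i, b_i, d_6)$ and acts by $c_4 \mapsto c_4 + \Delta$ on normal-form data is not established; making sense of it would require renormalizing the equation of $Z$ (after the substitution $x = u\beta - a_2$) back into the form of \cref{nota:con}, a computation the paper never performs and avoids on purpose. Second, even granting that structure, your orbit check is not yet an argument: both sides a priori vary with $c_4$, and the residual subgroup of the $10$-dimensional automorphism group of \cref{rem:con parameter space} fixing $(a_i,b_i,d_6)$ has no a priori dimension bound comparable to the $5$ parameters of $c_4$, so ``$c_4+\Delta$ is not in the orbit of $c_4$ for general $c_4$'' does not follow from a dimension count. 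The paper's proof replaces all of this global bookkeeping with a local computation. Using the explicit splitting lemma (\cref{thm:ana splitting explicit}) it computes the germs at the two $cA_5$ points,
\begin{align*}
f_{\mathrm{ana}} &= -w^2 + t^2 + d_6 - 2a_2c_4 + 2a_2^2 b_2 - 4a_0a_2^3 - (b_3-4a_1a_2)^2 + \text{h.o.t. in } y,z,\\
h_{\mathrm{ana}} &= -w^2 + u^2 + d_6 - (b_3-2a_1a_2)^2 + \text{h.o.t. in } y,z,
\end{align*}
so the degree-$6$ part of the germ of $Z$ is \emph{independent of $c_4$}, whereas that of $X$ moves in a $5$-dimensional affine family as $c_4$ varies (multiplication by $-2a_2$ is injective, using $a_2 \neq 0$). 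By \cref{thm:ana stable equivalence} and \cref{thm:ana lowest degree parts}, an isomorphism of germs forces these two binary sextic forms to agree up to an element of $\mathrm{GL}(2,\mathbb C)$, which has only $4$ parameters; since $5 > 4$, general $c_4$ yields non-isomorphic germs and hence $X \ncong Z$. The missing idea in your plan is exactly this: the invariance of $Z$'s germ in $c_4$, together with the reduction of the relevant symmetry group from the $10$-dimensional automorphism group to the $4$-dimensional $\mathrm{GL}(2,\mathbb C)$ supplied by \cref{thm:ana lowest degree parts}.
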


\begin{proof}
We exhibit the diagram below.
\begin{equation*}
\begin{tikzcd}[column sep = small]
& \arrow[ld, "{(3, 3, 1, 1)}"'] Y_0 \arrow[rd, ""'] \arrow[rr, dashed, "{4 \times (1, 1, -1, -1)}"] & & Y_1 \arrow[ld, ""'] \arrow[rd, "{(3, 3, 1, 1)}"] \\
cA_5 \in X \subseteq \mathbb P(1^4, 3) \hspace{-3em} & & W_0 & & cA_5 \in Z \subseteq \mathbb P(1^4, 3)
\end{tikzcd}
\end{equation*}

We construct $X$ and a $(3, 3, 1, 1)$-Kawakita blowup $Y_0 \to X$. Using \cref{mai:con}, and performing the coordinate change in \cref{eqn:wei cAn p v - r1 r2 equal} of \cref{thm:wei SDS cAn p} (with $p_2 = a_2$), we can write a sextic double solid $X$ with a $cA_5$ singularity by
\[
X\colon \mathbb V(f, -\beta + x t + a_2) \subseteq \mathbb P(1, 1, 1, 1, 2, 3),
\]
with variables $x, y, z, t, \beta, w$ where
\[
f = -w^2 + x \beta (2 b_3 - 4 \beta a_1 + 8 x t a_1 + x \beta) + 4 x^3 t^3 a_0 + x^2 t^2 B_2 + x t C_4 + D_6,
\]
where $B_i$, $C_i$, $D_i \in \mathbb C[y, z, t]$ are homogeneous of degrees~$i$. Define $T_0$ by
\[
\begin{array}{ccc|cccccc}
            &  u & x & y & z & w & \beta & t &\\
\Ta{T_0\colon} &  0 & 1 & 1 & 1 & 3 & 2 & 1 & \Tb{.}\\
            & -1 & 0 & 1 & 1 & 3 & 3 & 2 &
\end{array}
\]
Let $\Phi\colon T_0 \to \mathbb P(1, 1, 1, 1, 2, 3)$ be the ample model of $\mathbb V(x)$, $Y_0 \subseteq T_0$ the strict transform of~$X$, and $Y_0 \to X$ the restriction of~$\Phi$. Then, $Y_0$ is given by
\[
Y_0\colon \mathbb V(I_Y) \subseteq T_0 \ \text{ where } \ I_Y = (\Phi^*f / u^6,\, -u \beta + x t + a_2),
\]
and $Y_0 \to X$ is a $(3, 3, 1, 1)$-Kawakita blowup.

We show that the first map in the 2-ray game for $Y_0$ is a flop, locally analytically $4$ Atiyah flops, under \cref{def:mod generality conds}. Acting by the matrix $\smat{1 & 0\\ -1 & 1}$, we find
\[
\begin{array}{ccc|cccccc}
               &  u &  x & y & z & w & \beta & t &\\
\Ta{T_0 \cong} &  0 &  1 & 1 & 1 & 3 & 2 & 1 & \Tb{.}\\
               & -1 & -1 & 0 & 0 & 0 & 1 & 1 &
\end{array}
\]
The base of the flop in $\mathbb P(1, 1, 3) \subseteq W_0$ is given by $\mathbb V(a_2, -w^2 + D_6(y, z, 0)) \subseteq \mathbb P(1, 1, 3)$. If $a_2$ and $D_6(y, z, 0)$ are general, that is, \cref{def:mod generality conds} is satisfied, then this is exactly $4$ points. In this case, any such point $P$ is a smooth point in $\mathbb P(1, 1, 3)$. Consider the case where the $y$-coordinate of $P$ is non-zero, the case where $z$ is non-zero is similar. Locally analytically equivariantly, we can express $z$ and $w$ in terms of $u, x, \beta, t$ in $Y_0$, $W_0$ and~$Y_1$. So, the diagram $Y_0 \to W_0 \gets Y_1$ is locally analytically four Atiyah flops.

The last map in the 2-ray game of $Y_0$ is a weighted blowdown $Y_1 \to Z$. After acting by $\smat{3 & -2\\2 & -1}$ on the initial matrix of~$T_0$, we find that $T_1$ is given by
\[
\begin{array}{cccccc|ccc}
            & u & x & y & z & w & \beta &  t &\\
\Ta{T_1\colon} & 2 & 3 & 1 & 1 & 3 & 0 & -1 & \Tb{.}\\
            & 1 & 2 & 1 & 1 & 3 & 1 &  0 &
\end{array}
\]
We see that $Z \subseteq \mathbb P(1, 1, 1, 1, 2, 3)$ with variables $\beta, u, y, z, x, w$ is given by the ideal
\[
I_Z = (h, -u \beta + x + a_2),
\]
where $h$ is given by sending $t$ to $1$ in $\Phi^*f / u^6$, namely
\[
h = -w^2 + x \beta (2 b_3 - 4 u \beta a_1 + 8 x a_1 + x \beta) + 4 x^3 a_0 + x^2 B_Z + x C_Z + D_Z
\]
and
\[
B_Z = B_2(y, z, u), \qquad C_Z = C_4(y, z, u), \qquad D_Z = D_6(y, z, u).
\]
Substituting $x = u \beta - a_2$
into~$h$, we find that $Z$ is a sextic double solid. Applying the explicit splitting lemma (\cref{thm:con splitting explicit}), we find that the complex analytic space germ $(Z, P_\beta)$ is isomorphic to $(\mathbb V(h_{\operatorname{ana}}), \bm 0) \subseteq (\mathbb C^4, \bm 0)$ with variables $w, u, y, z$, where
\[
h_{\operatorname{ana}} = -w^2 + u^2 + d_6 - (b_3 - 2 a_1 a_2)^2 + \text{(h.o.t in $y, z$)},
\]
where $\text{(h.o.t in $y, z$)}$ stands for higher order terms in the variables $y, z$. So, $P_\beta \in Z$ is a $cA_5$ singularity. On the patch where $\beta$ is non-zero, we can substitute $u = x t + a_2$,
so the morphism $(Y_1)_\beta \to Z_\beta$ is a weighted blowup of a hypersurface given by a weight $6$ polynomial. By \cref{thm:wei cAn wt correct implies Kawakita blup}, $Y_1 \to Z$ is a $(3, 3, 1, 1)$-Kawakita blowup.

We show that $X$ and $Z$ are not isomorphic when $a_2 \neq 0$ and $c_4$ is general, using a dimension counting argument similar to \cite[Theorem~2.55]{GLS07}. Using the explicit splitting lemma, we find that the complex analytic space germ $(X, P_x)$ is isomorphic to $(\mathbb V(f_{\operatorname{ana}}), \bm 0) \subseteq (\mathbb C^4, \bm 0)$ with variables $w, t, y, z$ where
\[
f_{\operatorname{ana}} = -w^2 + t^2 + d_6 - 2 a_2 c_4 + 2 a_2^2 b_2 - 4 a_0 a_2^3 - (b_3 - 4 a_1 a_2)^2 + \text{(h.o.t in $y, z$)}.
\]
If $X$ and $Z$ are isomorphic, then this implies that the complex analytic space germs $(X, P_x)$ and $(Z, P_\beta)$ are isomorphic, implying by \cref{thm:ana stable equivalence,thm:ana lowest degree parts} that the degree $6$ parts of $f_{\operatorname{ana}}(0, 0, y, z)$ and $h_{\operatorname{ana}}(0, 0, y, z)$ are the same up to an invertible linear coordinate change on $y, z$. Fixing $a_0$, $a_1$, $a_2$, $b_2$, $b_3$ and~$d_6$, we see that $h_{\operatorname{ana}}(0, 0, y, z)$ is fixed, but $f_{\operatorname{ana}}(0, 0, y, z)$ has $5$ degrees of freedom. Since there are only $4$ degrees of freedom in picking an element of $\operatorname{GL}(2, \mathbb C)$, the polynomials $f_{\operatorname{ana}}(0, 0, y, z)$ and $h_{\operatorname{ana}}(0, 0, y, z)$ are not related by an invertible linear coordinate change when $c_4$ is general. This shows that if $X$ is general, then the varieties $X$ and $Z$ are not isomorphic.
\end{proof}

\subsection{\texorpdfstring
{$cA_6$ model}
{cA6 model}} \label{subsec:mod cA6}

\begin{proposition} \label{thm:mod cA6}
A sextic double solid that is a Mori fibre space with a $cA_6$ singularity satisfying \cref{def:mod generality conds} has a Sarkisov link to a hypersurface $Z_5 \subseteq \mathbb P(1, 1, 1, 1, 2)$ with a $cA_3$ singularity, starting with a $(4, 3, 1, 1)$-blowup of the $cA_6$ point, then two $(1, 1, -1, -1)$-flops, then a $(4, 1, 1, -2, -1; 2)$-flip, and finally a $(2, 2, 1, 1)$-blowdown to a $cA_3$ point. Under further generality conditions, the singular locus of $Z$ consists of 3 points, namely the $cA_3$ point, the $1/2(1, 1, 1)$ quotient singularity and an ordinary double point.
\end{proposition}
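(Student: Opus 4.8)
The plan is to mirror the proofs of \cref{thm:mod cA4,thm:mod cA5}: realize the $(4,3,1,1)$-Kawakita blowup $\varphi\colon Y_0 \to X$ of the $cA_6$ point as the restriction of a toric weighted blowup $T_0 \to \mathbb{P}$, and then run the toric 2-ray game on $T_0$ (as in \cref{exa:pre cA4 toric link}), checking at each stage that it restricts to the desired modification of $Y_0$. Concretely I aim to produce the link
\begin{equation*}
\begin{tikzcd}[column sep = small]
& \arrow[ld, "{(4,3,1,1)}"'] Y_0 \arrow[rd, ""'] \arrow[rr, dashed, "{2 \times (1,1,-1,-1)}"] & & Y_1 \arrow[ld, ""'] \arrow[rd, ""'] \arrow[rr, dashed, "{(4,1,1,-2,-1;2)}"] & & Y_2 \arrow[ld, ""'] \arrow[rd, "{(2,2,1,1)}"] & \\
cA_6 \in X \subseteq \mathbb{P}(1^4,3) \hspace{-2em} & & \mf X_0 & & \mf X_1 & & \hspace{-1em} cA_3 \in Z_5 \subseteq \mathbb{P}(1^4,2).
\end{tikzcd}
\end{equation*}

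First I would use \cref{mai:con} to present $X$ as $\mathbb{V}(f) \subseteq \mathbb{P}(1,1,1,1,3)$ satisfying conditions $2$ to $6$ of \cref{nota:con}, and apply the coordinate change of \cref{thm:wei SDS cAn p} with $(r_1, r_2, a, 1) = (4,3,1,1)$; since $r_1 \neq r_2$ this uses the ideal of \cref{eqn:wei cAn p v - gen} and realizes $X$ as an isomorphic complete intersection with auxiliary coordinates of weights $4$ and $2$. I would then write down the action matrix of $T_0$ encoding the blowup (with the exceptional divisor $\mathbb{V}(u)$ and the strict-transform plane $\mathbb{V}(x)$ spanning $N^1$), set $Y_0 \subseteq T_0$ to be the strict transform of $X$, and verify that $\varphi$ is a $(4,3,1,1)$-Kawakita blowup by \cref{thm:wei cAn wt correct implies Kawakita blup}, i.e.\ by checking on the affine patch $(Y_0)_x \to X_x$ that the defining polynomial has weight $r_1 + r_2 = 7$. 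Unlike the $cA_4$ family, $cA_6$ needs no unprojection, so I expect this embedding to 2-ray follow $T_0$ directly, as in \cref{thm:mod cA5}.

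Next I would run the game by taking ample models of the divisors spanning the edges of $\mo{Mov}(T_0)$. The first small modification $T_0 \dashrightarrow T_1$ restricts on $Y_0$ to a flop whose base, under \cref{cond:mod}, is the two points $\mathbb{V}(a_2) \subseteq \mathbb{P}^1$; the non-vanishing of one of $b_3(P), c_4(P), d_5(P)$ at each such $P$ makes these genuine flopping curves, and expressing the remaining coordinates by the implicit function theorem identifies each locally as the Atiyah flop $(1,1,-1,-1)$. The second modification $T_1 \dashrightarrow T_2$ is the genuinely new feature: acting on the action matrix by a suitable element of $\mo{GL}(2, \mathbb{Q})$, I would exhibit it as a toric flip of type $(4,1,1,-2,-1;2)$ passing through a cyclic quotient singularity of index $2$, and verify that it restricts to a flip of $Y_1$ (not a flop) by checking that the contracted and extracted curves have nonzero intersection with the canonical class of opposite sign, using the non-vanishing of $c_4 - 2a_1 b_3 - a_2 b_2 + 2a_0 a_2^2 + 6a_1^2 a_2$ from \cref{cond:mod}. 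Finally, $T_2 \to S_T$ restricts to the weighted blowdown $\psi\colon Y_2 \to Z$; setting the coordinate cutting out the $\psi$-exceptional divisor to $1$ and eliminating the auxiliary coordinate via the second defining equation (as in \cref{thm:mod cA5}) identifies $Z$ as a quintic $Z_5 \subseteq \mathbb{P}(1,1,1,1,2)$, the explicit splitting lemma (\cref{thm:ana splitting explicit}) at the image point confirms a $cA_3$ singularity, and \cref{thm:wei cAn wt correct implies Kawakita blup} shows $\psi$ is a $(2,2,1,1)$-Kawakita blowup of it.

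The hardest step will be the flip. In the flop steps both sides are smooth along the flopping locus, so the standard Atiyah model applies at once; the flip instead passes through a non-Gorenstein toric point, and I must confirm that the ambient toric small modification restricts to a genuinely \emph{small} modification of $Y_1$ (neither an isomorphism nor a divisorial contraction after restriction), that terminality is preserved so we stay in the Mori category, and that its weighted type is as stated. The closely related final task is the singularity analysis of $Z$: I expect the $cA_3$ point to be the image of the $\psi$-exceptional divisor, the $\frac{1}{2}(1,1,1)$ terminal quotient singularity to be the torus-fixed point coming from the weight-$2$ coordinate of $\mathbb{P}(1^4,2)$, and the \emph{ordinary double point} to be produced by the flip; pinning down this last node and showing there are no further singular points reduces to a Jacobian computation for $Z_5$ at the relevant torus-fixed points under the genericity of \cref{cond:mod} and \cref{thm:mod Y only up to two quot sing}.
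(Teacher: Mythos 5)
Your overall architecture coincides with the paper's: present $X$ via \cref{mai:con} and the coordinate change of \cref{thm:wei SDS cAn p} (with $p_2 = a_2$, $p_3 = b_3 - 4a_1a_2$), take the strict transform $Y_0$ in the toric blowup $T_0$ (no unprojection is needed, as you predict), run the 2-ray game, with the flop based at the two points of $\mb V(a_2)$ and the three singular points of $Z$ arising exactly where you say. The genuine gap is in the flip step, which you rightly call the hardest. Checking that the contracted curves are $K$-negative and the extracted ones $K$-positive only shows the map is numerically a flip; it establishes neither that $Y_2$ is terminal (without which the 2-ray link may be a bad link rather than a Sarkisov link, cf.\ \cref{cons:pre}) nor the stated type. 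The missing mechanism, which is what the paper uses, is a local identification: on the patch $\alpha \neq 0$ containing the flip base $P_\alpha$, solve the sextic equation for $u$ equivariantly by the implicit function theorem, so that $Y_1$ becomes locally the hypersurface $f' = -\beta(\cdots) + xt + a_2(y,z)$ in the $\mathbb{C}^*$-quotient of $\mathbb{C}^5$ with weights $(4,1,1,-2,-1)$; since $f'$ contains both $xt$ and $a_2 \neq 0$, this is case~(1) of \cite[Theorem~8]{Bro99}, which is a terminal flip of exactly the stated type and also produces the $1/2(1,1,1)$ point and the node on the extracted curve. Relatedly, you invoke the wrong clause of \cref{cond:mod} here: the flip only needs $a_2$ nonzero with distinct roots; the nonvanishing of $c_4 - 2a_1b_3 - a_2b_2 + 2a_0a_2^2 + 6a_1^2a_2$ is instead what guarantees, at the last step, that $P_\beta \in Z$ is exactly a $cA_3$ point whose local equation has weight exactly $4$, which is what allows \cref{thm:wei cAn wt correct implies Kawakita blup} to be applied to $\psi$.

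A second concrete problem is the type of the final contraction. You promise a $(2,2,1,1)$-Kawakita blowup (following the statement), but if you execute your own last step --- eliminate $u = (a_2+x)/\beta$ on the patch $Z_\beta$ and read off the blowup weights from the action matrix of $T_2$ --- the local coordinates $(x, \alpha, y, z)$ at $P_\beta$ carry weights $(3,1,1,1)$ and the local equation has weight $4$, so $\psi$ is a $(3,1,1,1)$-Kawakita blowup; there is no pair of weight-$2$ coordinates at $P_\beta$, so a $(2,2,1,1)$-blowup is impossible for this link. (This is an inconsistency within the paper itself: the proposition statement says $(2,2,1,1)$, while both the paper's proof and Table~\ref{tab:SDS bir mod overview} give $(3,1,1,1)$.) Two smaller slips: the auxiliary coordinates $\alpha, \beta$ have projective degrees $3$ and $2$ and blowup weights $4$ and $3$, not ``weights 4 and 2''; and the ordinary double point of $Z$ is not at a torus-fixed point (it sits at a point of the form $[u_0,0,0,1,2]$), so a Jacobian check carried out only at torus-fixed points would miss it.
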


\begin{proof}
We exhibit the diagram below.
\begin{equation*}
\begin{tikzcd}[column sep = small]
& \arrow[ld, "{(4, 3, 1, 1)}"'] Y_0 \arrow[rd, ""] \arrow[rr, dashed, "{2 \times (1, 1, -1, -1)}"] & & Y_1 \arrow[ld, ""] \arrow[rd, ""] \arrow[rr, dashed, "{(4, 1, 1, -2, -1; 2)}"] & & Y_2 \arrow[ld, ""] \arrow[rd, "{(3, 1, 1, 1)}"] \\
cA_6 \in X \subseteq \mathbb P(1^4, 3) \hspace{-3em} & & W_0 & & W_1 & & cA_3 \in Z_5 \subseteq \mathbb P(1^4, 2)
\end{tikzcd}
\end{equation*}

We construct $X$ and a $(4, 3, 1, 1)$-Kawakita blowup $Y_0 \to X$. Using \cref{mai:con} and \cref{thm:wei SDS cAn p} with $p_2 = a_2$ and $p_3 = b_3 - 4 a_1 a_2$,
we can write a sextic double solid $X$ with a $cA_6$ singularity by
\[
X\colon \mathbb V(f, -\beta + x t + a_2) \subseteq \mathbb P(1, 1, 1, 1, 2, 3),
\]
with variables $x, y, z, t, \beta, w$ where
\[
\begin{aligned}
f & = \alpha (-\alpha + 2 (b_3 - 4 \beta a_1 + 4 x t a_1 + x \beta))\\
  & + 2 \beta (c_4 - \beta b_2 + 2 x t b_2 + 2 x \beta a_1 + 2 \beta^2 a_0 - 6 x t \beta a_0 + 6 x^2 t^2 a_0)\\
  & + x^2 t^3 B_1 + x t^2 C_3 + t D_5
\end{aligned}
\]
where $B_i$, $C_i$, $D_i \in \mathbb C[y, z, t]$ are homogeneous of degree~$i$. Define $T_0$ by
\[
\begin{array}{ccc|cccccc}
            &  u & x & y & z & \alpha & \beta & t &\\
\Ta{T_0\colon} &  0 & 1 & 1 & 1 & 3 & 2 & 1 & \Tb{.}\\
            & -1 & 0 & 1 & 1 & 4 & 3 & 2 &
\end{array}
\]
Let $\Phi\colon T_0 \to \mathbb P(1, 1, 1, 1, 2, 3)$ be the ample model of $\mathbb V(x)$, $Y_0 \subseteq T_0$ the strict transform of~$X$, and $Y_0 \to X$ the restriction of~$\Phi$. Then, $Y_0$ is given by
\[
Y_0\colon \mathbb V(I_Y) \subseteq T_0 \ \text{ where } \ I_Y = (\Phi^*f / u^7,\, -u \beta + x t + a_2),
\]
and $Y_0 \to X$ is a $(4, 3, 1, 1)$-Kawakita blowup.

We show that the first diagram $Y_0 \to W_0 \gets Y_1$ in the 2-ray game for $Y_0$ is locally analytically two Atiyah flops under \cref{def:mod generality conds}, namely that $\mathbb V(a_2) \subseteq \mathbb P^1$ with variables $y, z$ consists of exactly two points, and for both of the points~$P$, one of $b_3(P)$, $c_4(P)$ or $d_5(P)$ is non-zero, where $D_5 = t^5 d_0 + 2 t^4 d_1 + t^3 d_2 + 2 t^2 d_3 + t d_4 + 2 d_5$.
Acting by the matrix $\smat{4 & -3\\-1 & 1}$, we find
\[
\begin{array}{ccc|cccccc}
            &  u &  x & y & z & \alpha &  \beta &  t &\\
\Ta{T_0\colon} &  3 &  4 & 1 & 1 & 0 & -1 & -2 & \Tb{.}\\
            & -1 & -1 & 0 & 0 & 1 &  1 &  1 &
\end{array}
\]
Under the above condition, after a suitable linear change of coordinates on $y, z$, we find that $a_2 = yz$. Let $P = \mathbb V(z) \in \mathbb P^1 \subseteq W_0$, the case where $P = \mathbb V(y)$ is similar.
On the patch where $y$ is non-zero, we can substitute $z = u \beta - x t$.
The contracted locus is $\mathbb P^1 \cong \mathbb V(\alpha, \beta, t) \subseteq (Y_0)_y$, and the extracted locus is $\mathbb V(u, x) = \mathbb V(u, x, \alpha b_3(1, 0) + \beta c_4(1, 0) + t d_5(1, 0)) \subseteq (Y_1)_y$.
By \cref{def:mod generality conds}, we can express one of $\alpha, \beta, t$ equivariantly locally analytically in the other variables. So, the flop diagram $Y_0 \to W_0 \gets Y_1$ is locally analytically a $(1, 1, -1, -1)$-flop above both of the points.

We show that the next diagram in the 2-ray game of $Y_0$ is a $(4, 1, 1, -2, -1; 2)$-flip (this is case~(1) in \cite[Theorem~8]{Bro99}). The toric variety $T_1$ is given by
\[
\begin{array}{ccccc|cccc}
            &  u &  x & y & z & \alpha &  \beta &  t &\\
\Ta{T_1\colon} &  3 &  4 & 1 & 1 & 0 & -1 & -2 & \Tb{.}\\
            & -1 & -1 & 0 & 0 & 1 &  1 &  1 &
\end{array}
\]
The base of the flip is $P_\alpha = [0, 0, 0, 0, 1, 0, 0]$.
On the patch where $\alpha$ is non-zero, we can express $u$ locally analytically and equivariantly in terms of $x, y, z, \beta, t$. After substitution, the ideal is principal, with generator $f' = -\beta \cdot (2 x + \ldots) + x t + a_2$.
Under \cref{def:mod generality conds}, $a_2$ has a non-zero coefficient in~$f'$, so the flip diagram corresponds to case~(1) in \cite[Theorem~8]{Bro99}. The flips contracts a curve containing a $1/4(1, 1, 3)$ singularity and extracts a curve containing a $1/2(1, 1, 1)$ singularity and an ordinary double point. The ordinary double point on $Y_2$ is at $[u_0, 0, 0, 0, 2, 1, 1]$ for some $u_0 \in \mathbb C$.

We show that the last map in the 2-ray game of $Y_0$ is a weighted blowup $Y_2 \to Z$, where $Z$ is isomorphic to a hypersurface $Z_5 \subseteq \mathbb P(1, 1, 1, 1, 2)$ with variables $u, y, z, \beta, \alpha$. Acting by the matrix $\smat{3 & -2\\2 & -1}$ on the initial action-matrix of $T_0$, we find that $T_2$ is given by
\[
\begin{array}{cccccc|ccc}
            & u & x & y & z & \alpha & \beta &  t &\\
\Ta{T_2\colon} & 2 & 3 & 1 & 1 & 1 & 0 & -1 & \Tb{.}\\
            & 1 & 2 & 1 & 1 & 2 & 1 &  0 &
\end{array}
\]
Define the bidegree $(5, 2)$ complete intersection $Z\colon \mathbb V(h, a_2 - u \beta + x) \subseteq \mathbb P(1, 1, 1, 1, 2, 2)$
with variables $u, y, z, \beta, x, \alpha$, where
\[
\begin{aligned}
h & = \alpha (-u \alpha + 2 (b_3 - 4 u \beta a_1 + 4 x a_1 + x \beta))\\
  & + 2 \beta (c_4 - u \beta b_2 + 2 x b_2 + 2 x \beta a_1 + 2 u^2 \beta^2 a_0 - 6 u x \beta a_0 + 6 x^2 a_0)\\
  & + x^2 B_Z + x C_Z + D_Z,
\end{aligned}
\]
where
\[
B_Z = B_1(y, z, u), \qquad C_Z = C_3(y, z, u), \qquad D_Z = D_5(y, z, u).
\]
The morphism $Y_2 \to Z$ given by the ample model of $\mathbb V(\beta)$ is a weighted blowdown with centre~$P_\beta$ and exceptional locus $\mathbb V(t)$. Substituting
\begin{equation} \label{eqn:mod cA6 subs}
x = u \beta - a_2
\end{equation}
into $h$, we find that $Z$ is isomorphic to a hypersurface $Z_5 \subseteq \mathbb P(1, 1, 1, 1, 2)$ with variables $u, y, z, \beta, \alpha$. The substitution~\labelcref{eqn:mod cA6 subs} does not lift onto~$Y_2$. Instead, on the patch $Z_\beta$, we can substitute $u = (a_2 + x)/\beta$. This substitution lifts to $(Y_2)_\beta$. By \cref{def:mod generality conds}, $P_\beta \in Z$ is a $cA_3$ singularity and the hypersurface $Z_\beta$ is given by a weight 4 polynomial. By \cref{thm:wei cAn wt correct implies Kawakita blup}, $(Y_2)_\beta \to Z_\beta$ is a $(3, 1, 1, 1)$-Kawakita blowup.

Note that $Z$ has an ordinary double point at $[u_0, 0, 0, 1, 2]$ for some $u_0 \in \mathbb C$.
\end{proof}

\subsection{\texorpdfstring
{$cA_7$ family 7.1 model}
{cA7 family 7.1 model}} \label{subsec:mod cA7-1}

\begin{proposition} \label{thm:mod cA7-1}
A Mori fibre space sextic double solid with a $cA_7$ singularity in family~7.1 satisfying \cref{def:mod generality conds} has a Sarkisov link to $Z_{3, 4} \subseteq \mathbb P(1, 1, 1, 1, 2, 2)$ with an ordinary double point, starting with a $(4, 4, 1, 1)$-blowup of the $cA_7$ point, then two $(4, 1, 1, -2, -1; 2)$-flips, and finally a blowdown (with standard weights $(1, 1, 1, 1)$) to an ordinary double point. Under further generality conditions, $Z$ has exactly five singular points, namely two $1/2(1, 1, 1)$ singularities and three ordinary double points.
\end{proposition}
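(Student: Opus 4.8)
The plan is to follow the template of the $cA_4$ and $cA_6$ models and exhibit the Sarkisov link
\begin{equation*}
\begin{tikzcd}[column sep = small]
& \arrow[ld, "{(4, 4, 1, 1)}"'] Y_0 \arrow[rd, ""'] \arrow[rr, dashed, "{2 × (4, 1, 1, -2, -1; 2)}"] & & Y_1 \arrow[ld, ""'] \arrow[rd, "{(1, 1, 1, 1)}"] \\
cA_7 ∈ X ⊆ ℙ(1^4, 3) \hspace{-3em} & & \mf X_0 & & \hspace{-2em} \mo{ODP} ∈ Z_{3, 4} ⊆ ℙ(1^4, 2^2)
\end{tikzcd}
\end{equation*}
and then verify each arrow in turn. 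First I would use \cref{mai:con} to put the general family-$1$ $cA_7$ sextic double solid into the shape of \cref{nota:con}, so that $q = 1$, $a_2 = r$, $b_3 = s + 4 a_1 r$, and $c_4, d_5$ are as in \cref{itm:con condition 7.1}. Applying the $r_1 = r_2 = 4$ case of \cref{thm:wei SDS cAn p} with $p_2 = a_2$ and $p_3 = b_3 - 4 a_1 a_2$ yields an isomorphic model $\bar X$ together with a $(4, 4, 1, 1)$-Kawakita blowup $\bar Y_0 → \bar X$ inside a toric variety $\bar T_0$ as in \cref{cons:pre}. Because $p_{<4} = p_2 + p_3$ now carries both a degree-$2$ and a degree-$3$ part, a defining generator of $I_{\bar Y}$ lands in the irrelevant ideal and $I_{\bar Y}$ fails to 2-ray follow $\bar T_0$, exactly as in the $cA_4$ case of \cref{thm:mod cA4}.

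I would resolve this by unprojecting the offending divisor (following \cite[Section~2]{Rei00} and \cite[Section~2.3]{PR04}), which introduces an extra coordinate and gives an embedding $Y_0 = \mb V(I_Y) ⊆ T_0$ into a new toric variety $T_0$ for which $I_Y$ does 2-ray follow $T_0$ in the sense of \cref{cons:pre}. The morphism $Y_0 → X$ remains the $(4, 4, 1, 1)$-Kawakita blowup, which I confirm by restricting to the affine patch $(Y_0)_x$ and checking that the defining series has weight $r_1 + r_2 = 8$, so that \cref{thm:wei cAn wt correct implies Kawakita blup} applies, exactly as in \cref{thm:mod cA4,thm:mod cA5}.

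The heart of the argument is the 2-ray game for $Y_0 ⊆ T_0$. After reading off the action-matrix of $T_0$ and multiplying it by a suitable element of $\mo{GL}(2, ℚ)$, I would show that the first interior wall is crossed by a single small modification whose base in the relevant $ℙ¹$ (with coordinates $y, z$) is $\mb V(-e_2 + 4 a_0 r_2 + b_2 - 6 a_1^2)$, two distinct points under \cref{cond:mod}. By \cref{thm:mod Y only up to two quot sing} the blowup $Y_0$ has exactly two quotient singularities, both of type $1/4(1, 1, 3)$, and the two contracted curves of the wall-crossing each pass through one of them. Passing to the chart where the relevant central toric coordinate is non-zero and eliminating one of $y, z$, I would recognise the local model as case~(1) of \cite[Theorem~8]{Bro99}, so that the wall-crossing is the asserted pair of $(4, 1, 1, -2, -1; 2)$-flips: as in the $cA_6$ model, each flip contracts a curve through a $1/4(1, 1, 3)$ point and extracts a curve carrying a $1/2(1, 1, 1)$ point together with an ordinary double point. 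Consequently $Y_1$ acquires two $1/2(1, 1, 1)$ points and two ordinary double points.

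Finally I would identify $Y_1 → Z$ as the ample model of the remaining edge of $\mo{Mov}(T_0)$: a blowdown with standard weights $(1, 1, 1, 1)$ contracting the exceptional divisor to a single point. Eliminating the exceptional and the unprojection coordinates recovers $Z$ as a bidegree-$(3, 4)$ complete intersection in $ℙ(1, 1, 1, 1, 2, 2)$ whose contracted divisor collapses to an ordinary double point $P ∈ Z$. Collecting singularities under the further generality of \cref{thm:mod Y only up to two quot sing}, the point $P$ contributes one ordinary double point while the two flips contribute the two $1/2(1, 1, 1)$ points and two further ordinary double points, giving exactly two $1/2(1, 1, 1)$ singularities and three ordinary double points on $Z$. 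The main obstacle I anticipate is the unprojection step together with the verification that $I_Y$ genuinely 2-ray follows $T_0$, with no unexpected intermediate walls and without ever leaving the terminal Mori category, so that the resulting 2-ray link is a Sarkisov link rather than a bad link; a secondary difficulty is keeping the bookkeeping of quotient and nodal singularities consistent across both flips, so that the final count on $Z$ is exactly as claimed --- a finite but delicate chart-by-chart computation via \cite{Bro99}.
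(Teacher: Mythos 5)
Your overall skeleton --- re-embed $X$ via unprojection so that the ideal 2-ray follows the ambient toric variety, check the Kawakita blowup on the patch $(Y_0)_x$ via \cref{thm:wei cAn wt correct implies Kawakita blup}, run the toric 2-ray game, identify the flips via case~(1) of \cite[Theorem~8]{Bro99}, and end with a divisorial contraction to an ordinary double point --- is the same as the paper's. But the central step, the identification of the walls of the 2-ray game, is wrong. In the correct embedding, where $I_Y = (Φ^*f/u^8,\ uβ - r_2 - xt,\ uγ - s_3 - xβ)$, the first interior wall (the one over the $ℙ^1$ with coordinates $y, z$) restricts on $Y$ to an \emph{isomorphism}: its base is $\mb V(r_2, s_3) ⊆ ℙ^1$, which is empty precisely because $r$ and $s$ are coprime in family~1. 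This step has to be checked (otherwise the game of $Y$ does not follow the toric game at all), and your proposal never addresses it. The two $(4, 1, 1, -2, -1; 2)$-flips occur at the \emph{second} wall, whose base is $\mb V(γ^2 - w^2)$, i.e.\ the two points $[1, 1]$ and $[-1, 1]$ of the $ℙ^1$ with coordinates $w, γ$ --- automatically distinct, with no generality hypothesis needed; the local model there is the principal ideal $(βs_3 - r_2 + xβ^2 - xt)$ obtained after substituting $u = s_3 + xβ$ on the chart $γ ≠ 0$, not anything obtained by ``eliminating one of $y, z$''. Your claim that the flips sit over $\mb V(-e_2 + 4a_0r_2 + b_2 - 6a_1^2)$ in the $ℙ^1$ with coordinates $y, z$ is pattern-matched from the $cA_6$ case and is incorrect.

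This misplacement has a serious consequence: \cref{cond:mod} for ($cA_7$, 1) has nothing to do with the flip base; it is exactly what is needed at the \emph{last} step, to prove that $P_β ∈ Z'$ --- the point to which the divisor $\mb V(t) ⊆ Y_2$ is contracted --- is an ordinary double point rather than a more degenerate singularity. That is the claim ``$Z_{3,4}$ with an ordinary double point'' in the statement, and under your allocation of hypotheses it is never proved: you assert that the contracted divisor ``collapses to an ordinary double point'' after having already spent the generality condition on the flips. Two smaller inaccuracies point the same way: under \cref{cond:mod} alone the two $cA_1$ points extracted by the flips need not be ordinary double points (they are ODPs exactly when $r_2$ is not the square of a linear form, which is part of the ``further generality'' needed for the five-point count); and \cref{thm:mod Y only up to two quot sing} concerns the singularities of $Y_0$ under unspecified generality which the paper explicitly does not use in the proof of \cref{mai:mod}, so it cannot substitute for either verification.
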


\begin{proof}
We exhibit the diagram below.
\begin{equation*}
\begin{tikzcd}[column sep = scriptsize]
& \arrow[ld, "{(4, 4, 1, 1)}"'] Y_0 \arrow[r, "\sim"] & Y_1 \arrow[rd, ""] \arrow[rr, dashed, "{2 \times (4, 1, 1, -2, -1; 2)}"] & & Y_2 \arrow[ld, ""] \arrow[rd, "{(1, 1, 1, 1)}"] \\
cA_7 \in X \subseteq \mathbb P(1^4, 3) \hspace{-3em} & & & W_0 & & \operatorname{ODP} \in Z_{3, 4} \subseteq \mathbb P(1^4, 2^2)
\end{tikzcd}
\end{equation*}

We construct $X$ and a $(4, 4, 1, 1)$-Kawakita blowup $Y_0 \to X$. We can write a sextic double solid $X$ with an isolated $cA_7$ singularity in family~7.1 by
\[
X\colon \mathbb V(f, \beta - x t - r_2, \gamma - x \beta - s_3) \subseteq \mathbb P(1, 1, 1, 1, 2, 3, 3)
\]
with variables $x, y, z, t, \beta, \gamma, w$, where
\[
\begin{aligned}
f & = -w^2 + \gamma^2 - 2 t \gamma e_2 + 2 \beta^2 e_2 + 2 t \beta c_3 + 4 t \gamma b_2 - 2 \beta^2 b_2 - 2 t \beta^2 b_1 + 4 x t^2 \beta b_1\\
  & + 2 x^2 t^4 b_0 - 16 t \gamma a_1^2 + 16 \beta^2 a_1^2 + 4 \beta \gamma a_1 - 8 \beta^3 a_0 + 12 x t \beta^2 a_0 + x t^3 C_2 + t^2 D_4,
\end{aligned}
\]
where $C_i$, $D_i \in \mathbb C[y, z, t]$ are homogeneous of degree~$i$. Define $T_0$ by
\[
\begin{array}{ccc|ccccccc}
            &  u & x & y & z & w & \gamma & \beta & t &\\
\Ta{T_0\colon} &  0 & 1 & 1 & 1 & 3 & 3 & 2 & 1 & \Tb{.}\\
            & -1 & 0 & 1 & 1 & 4 & 4 & 3 & 2 &
\end{array}
\]
Define $Y_0$ by
\[
Y_0\colon \mathbb V(I_Y) \subseteq T_0 \ \text{ where } \ I_Y = (\Phi^*f / u^8,\, u \beta - r_2 - x t,\, u \gamma - s_3 - x \beta).
\]
The ample model of $\mathbb V(x) \subseteq Y_0$ is a $(4, 4, 1, 1)$-Kawakita blowup $Y_0 \to X$.

We show that the diagram $Y_0 \to W_0 \gets Y_1$ induces an isomorphism $Y_0 \to Y_1$. Acting by the matrix $\smat{4 & -3\\-1 & 1}$, we find
\[
\begin{array}{ccc|ccccccc}
               &  u &  x & y & z & w & \gamma &  \beta &  t &\\
\Ta{T_0 \cong} &  3 &  4 & 1 & 1 & 0 & 0 & -1 & -2 & \Tb{.}\\
               & -1 & -1 & 0 & 0 & 1 & 1 &  1 &  1 &
\end{array}
\]
Define $T_1$, respectively $T_2$, with the same action as $T_0$ but with irrelevant ideal $(u, x, y, z) \cap (w, \gamma, \beta, t)$, respectively $(u, x, y, z, w, \gamma) \cap (\beta, t)$. Define $Y_1 \subseteq T_1$ and $Y_2 \subseteq T_2$ by the same ideal~$I_Y$. The base of the flop $T_0 \to \mathcal W_0 \gets T_1$ restricts to $\mathbb V(r_2, s_3) \subseteq \mathbb P^1 \subseteq W_0$, which is empty. Therefore, $Y_0 \to W_0$ and $W_0 \gets Y_1$ are isomorphisms.

We show that the next diagram $Y_1 \to W_1 \gets Y_2$ in the 2-ray game of $Y_0$ is locally analytically two $(4, 1, 1, -2, -1; 2)$-flips. The only monomials in $\Phi^*f / u^8$ that are not in $(u, x, y, z, \beta, t)$ are $-w^2$ and~$\gamma^2$. Therefore, the base of the flip is two points, $[1, 1]$ and $[-1, 1] \in \mathbb P^1$ with variables $w$ and $\gamma$ inside $W_1$. We make a change of coordinates $w' = w - \gamma$, respectively $w' = w + \gamma$, for the flip above $[1, 1]$, respectively~$[-1, 1]$. On the patch where $\gamma$ is non-zero, we can substitute $u = s_3 + x \beta$
in $\Phi^*f / u^8$, and express $w'$ locally analytically and equivariantly above $[1, 1]$, respectively $[-1, 1]$, in terms of $x, y, z, \beta, t$. After projecting away the variables $u$ and~$w'$, we are left with the principal ideal $(\beta s_3 - r_2 + x \beta^2 - x t)$.
Since it contains both $r_2$ and $xt$, by case~(1) in \cite[Theorem~8]{Bro99}, it is a terminal $(4, 1, 1, -2, -1; 2)$-flip above both $[1, 1]$ and $[-1, 1]$. The flip contracts two curves, both containing a $1/4(1, 1, 3)$ singularity, and extracts two curves, both containing a $1/2(1, 1, 1)$ singularity and a $cA_1$ singularity. The $cA_1$ points are both ordinary double points if $r_2$ is not a square of a linear form, and are both 3-fold $A_2$ singularities (given by $x_1^2 + x_2^2 + x_3^2 + x_4^3$) otherwise. On~$Y_2$, the $cA_1$ singularities are at $[0, 0, 0, 0, 1, 1, 1, 1]$ and $[0, 0, 0, 0, -1, 1, 1, 1]$.

We show that the last map in the link for $X$ is a divisorial contraction $Y_2 \to Z'$. Acting by the matrix $\smat{3 & -2\\2 & -1}$ on the initial action-matrix of $T_0$, we see that
\[
\begin{array}{ccccccc|ccc}
               & u & x & y & z & w & \gamma & \beta &  t &\\
\Ta{T_2 \cong} & 2 & 3 & 1 & 1 & 1 & 1 & 0 & -1 & \Tb{.}\\
               & 1 & 2 & 1 & 1 & 2 & 2 & 1 &  0 &
\end{array}
\]
Define $Z' \subseteq \mathbb P(1, 1, 1, 1, 2, 2, 2)$ with variables $u, y, z, \beta, w, \gamma, x$ by the ideal~$I_{Z'}$, where $I_{Z'}$ is the image of the ideal $I_Y$ under the homomorphism $t \mapsto 1$. Let $Y_2 \to Z'$ be the ample model of $\mathbb V(\beta)$. On the affine patch $Z'_\beta$, we can express $u$ and $x$ locally analytically and equivariantly in terms of $y, z, w, \gamma, \beta, t$. This coordinate change lifts to~$Y_2$. By \cref{def:mod generality conds}, we can compute that $P_\beta \in Z'$ is an ordinary double point, and $Y_2 \to Z'$ is locally analytically the (usual) blowup with centre~$P_\beta$.

The variety $Z'$ is isomorphic to a complete intersection $Z_{3, 4} \subseteq \mathbb P(1^4, 2^2)$, by projecting away from~$x$. The variety $Z$ is given by
\[
Z_{3, 4}\colon \mathbb V(-s_3 + \beta r_2 + u \gamma - u \beta^2,\, h) \subseteq \mathbb P(1, 1, 1, 1, 2, 2)
\]
with variables $u, y, z, \beta, w, \gamma$, where
\[
\begin{aligned}
h & = -w^2 + \gamma^2 + 2 b_0 r_2^2 - 4 \beta b_1 r_2 - 4 u \beta b_0 r_2 - 12 \beta^2 a_0 r_2 - 2 \gamma e_2 + 2 \beta^2 e_2 + 2 \beta c_3 + 4 \gamma b_2\\
  & - 2 \beta^2 b_2 + 2 u \beta^2 b_1 + 2 u^2 \beta^2 b_0 - 16 \gamma a_1^2 + 16 \beta^2 a_1^2 + 4 \beta \gamma a_1 + 4 u \beta^3 a_0 + (u \beta - r_2) C_Z + D_Z,
\end{aligned}
\]
where $C_Z = C_2(y, z, u)$ and $D_Z = D_4(y, z, u)$. The variety $Z$ has two $cA_1$ singularities at $[0, 0, 0, 1, 1, 1]$ and $[0, 0, 0, 1, -1, 1]$.
\end{proof}

\begin{remark}
We explain how we found the variety~$X$. Using $p_2 = r_2$ and $p_3 = s_3$, we can write a sextic double solid with an isolated $cA_7$ in family~7.1 by $\bar X\colon \mathbb V(\bar f, x^2 t + x r_2 + s_3 - \bar{\gamma})$
inside $\mathbb P(1, 1, 1, 1, 3, 3)$ with variables $x, y, z, t, w, \bar{\gamma}$, where $\bar f$ is given as in \cref{mai:con}. The $(1, 1, 4, 4, 2)$-blowup $\bar Y_0 \to \bar X$ for variables $y, z, w, \bar{\gamma}, t$ is a $(4, 4, 1, 1)$-Kawakita blowup, but the 2-ray game of $\bar Y_0$ does not follow the ambient toric variety~$\bar T_0$. Namely, the toric anti-flip $\bar T_0 \to \bar{\mathcal W}_0 \gets \bar T_1$ restricts to $\bar Y_0 \to \bar{W}_0 \gets \bar Y_1$, where $\bar Y_0 \to \bar{W}_0$ is an isomorphism and $\bar{W}_0 \gets \bar Y_1$ extracts $\mathbb P^2$, a divisor on~$\bar Y_1$. The reason why $\bar Y_0$ was not the correct variety is that one of the generators of the ideal of $\bar Y_0$ is $\bar g_1 = x^2 t + x r_2 + u s_3 - u \bar{\gamma}$,
which is inside the irrelevant ideal $(u, x)$. We find the correct variety $Y_0$ by ``unprojecting'' $\bar g_1 = 0$ with respect to~$u, x$. By ``unprojection'', we mean the coordinate change $\bar Y_0 \to Y_0$, an isomorphism. See \cite[Section~2]{Rei00} or \cite[Section~2.3]{PR04} for more details on this type of unprojection. This coordinate change induces the coordinate change $\bar X \to X$, where $X$ is given as in the proof of \cref{thm:mod cA7-1}.
\end{remark}

\subsection{\texorpdfstring
{$cA_7$ family 7.2 model}
{cA7 family 7.2 model}} \label{subsec:mod cA7-2}

\begin{proposition} \label{thm:mod cA7-2}
A Mori fibre space sextic double solid with a $cA_7$ singularity in family~7.2 satisfying \cref{def:mod generality conds} has a Sarkisov link to a complete intersection $Z_{2, 4} \subseteq \mathbb P(1, 1, 1, 1, 1, 2)$ with a $cA_2$ singularity, starting with a $(4, 4, 1, 1)$-blowup of the $cA_7$ point, followed by one Atiyah flop, then two $(4, 1, -1, -3)$-flips, and finally a $(3, 3, 2, 1)$-blowdown to a $cA_2$ point. Under further generality conditions, the variety $Z$ is smooth outside the $cA_2$ point.
\end{proposition}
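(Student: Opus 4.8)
The plan is to follow the template of \cref{thm:mod cA4,thm:mod cA5,thm:mod cA6,thm:mod cA7-1}: construct $X$ together with the initial Kawakita blowup, embed the blown-up variety into a toric variety whose toric 2-ray game restricts to the claimed Sarkisov link, and then read off each small modification and the terminal contraction. Concretely, I would aim to establish the link
\begin{equation*}
\begin{tikzcd}[column sep = small]
& \arrow[ld, "{(4, 4, 1, 1)}"'] Y_0 \arrow[rd, ""] \arrow[rr, dashed, "{(1, 1, -1, -1)}"] & & Y_1 \arrow[ld, ""] \arrow[rd, ""] \arrow[rr, dashed, "{2 × (4, 1, -1, -3)}"] & & Y_2 \arrow[ld, ""] \arrow[rd, "{(3, 3, 2, 1)}"] \\
cA_7 ∈ X ⊆ ℙ(1^4, 3) \hspace{-3em} & & \mf X_0 & & \mf X_1 & & cA_2 ∈ Z_{2, 4} ⊆ ℙ(1^5, 2)
\end{tikzcd}
\end{equation*}

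First I would write $X$ explicitly via \cref{mai:con} under condition \cref{itm:con condition 7.2}, where $a_2 = q_1 r_1$ and $b_3 = q_1(s_2 + 4 a_1 r_1)$, so that the splitting data of \cref{thm:wei SDS cAn p} satisfy $p_2 = q_1 r_1$ and $p_3 = q_1 s_2$, both carrying the common linear factor $q_1$. Since $r_1 = r_2 = 4$, both $p_2$ and $p_3$ have degree less than $r_1$, so I would introduce two auxiliary coordinates (of degrees $2$ and $3$) and perform an unprojection exactly as in \cref{subsec:mod cA7-1}, producing an embedding $Y_0 ⊆ T_0$ for a suitable toric $T_0$ whose defining ideal $I_Y$ 2-ray follows $T_0$. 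Checking that the relevant affine patch is cut out by a power series of weight $r_1 + r_2 = 8$ and invoking \cref{thm:wei cAn wt correct implies Kawakita blup} would confirm that the ample model of $\mb V(x)$ is a $(4,4,1,1)$-Kawakita blowup $Y_0 → X$.

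I would then run the toric 2-ray game, multiplying the action-matrix by elements of $\mo{GL}(2, \mb Z)$ as in \cref{exa:pre cA4 toric link} and restricting to each $Y_i$. The coprimality of $q_1$ and $r_1$ (\cref{cond:mod}) is what governs the combinatorics: I expect the common factor $q_1$ to cut out a single point of $ℙ^1$ in the first flopping locus, yielding the one Atiyah flop, which I would verify to be a $(1,1,-1,-1)$-flop by the usual implicit-function-theorem argument on the affine patch where $y$ or $z$ is non-zero. The $-w^2$ term should then produce a base of two points for the next modification; on each patch I would eliminate one variable, recognize the local model as a $(4,1,-1,-3)$-flip, and identify the contracted and extracted curves together with their quotient singularities. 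Finally, acting by an appropriate matrix presents $Y_2 → Z$ as the ample model of the last movable ray; substituting to eliminate the auxiliary coordinates exhibits $Z$ as a complete intersection $Z_{2,4} ⊆ ℙ(1^5, 2)$, and applying the explicit splitting lemma (\cref{thm:ana splitting explicit}) at the centre $P_β$ together with \cref{thm:wei cAn wt correct implies Kawakita blup} identifies $Y_2 → Z$ as a $(3,3,2,1)$-Kawakita blowdown onto a type-$2$ $cA_2$ point.

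The hard part will be twofold. First, getting the unprojection correct: because $p_2$ and $p_3$ share the factor $q_1$, the naive embedding will have a generator lying in the irrelevant ideal, so the link will either extract a divisor prematurely or terminate in a bad link rather than a divisorial contraction; choosing the auxiliary variables and unprojection equations so that $I_Y$ genuinely 2-ray follows $T_0$ is the delicate point, as in the remarks accompanying \cref{subsec:mod cA4,subsec:mod cA7-1}. Second, identifying the $(4,1,-1,-3)$-flips: this is a different toric flip type from the $(4,1,1,-2,-1;2)$-flips of \cref{subsec:mod cA6,subsec:mod cA7-1}, so I would verify the local analytic model patch-by-patch and, crucially, confirm that every intermediate $Y_i$ is terminal and that $-K_{Y_0}$ lies in the interior of $\mo{Mov}(Y_0)$, so that the 2-ray link is a genuine Sarkisov link rather than a bad link. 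Smoothness of $Z$ away from the $cA_2$ point under further generality would then follow from a dimension count in the spirit of \cref{thm:mod Y only up to two quot sing}.
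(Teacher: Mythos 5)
You follow the paper's overall strategy (projectivizing \cref{thm:wei SDS cAn p}, embedding $Y_0$ into a toric variety, running the 2-ray game, and finishing with \cref{thm:wei cAn wt correct implies Kawakita blup}), but there is a genuine gap at precisely the step that carries the content of this case, and the construction you offer in its place is the one that provably fails. You propose two auxiliary coordinates of degrees $2$ and $3$ and an unprojection ``exactly as in'' \cref{subsec:mod cA7-1}. That recipe produces the embedding $\bar X ⊆ ℙ(1,1,1,1,2,3,3)$ with ideal $(f - 2e_3(ts_2 - βr_1),\ β - xt - q_1r_1,\ γ - xβ - q_1s_2)$, and the blowup $\bar Y_0 ⊆ \bar T_0$ is cut out by an ideal $I_{\bar Y}$ which, since $q_1, r_1, s_2, e_3 ∈ (y, z)$, is contained in $(u, x, y, z)$. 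As a consequence, after the Atiyah flop the next toric flip does not restrict to a flip of $\bar Y_1$: it contracts two curves but extracts the entire surface $\mb V(u,x,y,z)$ as a divisor on $\bar Y_2$, so $I_{\bar Y}$ does not 2-ray follow $\bar T_0$ and the game never reaches a divisorial contraction, let alone the claimed $(3,3,2,1)$-blowdown. You do flag this failure mode in your last paragraph, but flagging it is not fixing it: your plan contains no candidate for an embedding that works.

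The missing idea is a further unprojection of a different, determinantal kind, which requires a \emph{third} auxiliary coordinate. Writing the three generators of $I_{\bar Y}$ that lie in the irrelevant ideal as a linear system in the vector $(u, x, q_1, 2e_3)$ and applying Cramer's rule gives, modulo $I_{\bar Y}$, the equalities
\[
\frac{t s_2 - β r_1}{u} \;=\; \frac{β s_2 - γ r_1}{x} \;=\; \frac{t γ - β^2}{q_1} \;=\; \frac{g}{2 e_3},
\]
and this common value is a new coordinate $ξ$ of degree $3$. The correct model is then $X ⊆ ℙ(1,1,1,1,2,3,3,3)$ with the additional generator $-ξ + ts_2 - βr_1$, and $Y_0$ sits inside a nine-variable toric variety, cut out by six equations, among them $-xξ + βs_2 - γr_1$ and $-q_1ξ + tγ - β^2$. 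These extra equations are not bookkeeping: the generator $-q_1ξ + tγ - β^2$ is exactly what forces the third toric flip to restrict to an isomorphism $Y_2 → Y_3$, and the final contraction is the ample model of $\mb V(ξ)$, a $(3,3,2,1)$-blowdown with centre $P_ξ$ — not $P_β$ as you write. That slip is a symptom of the underlying problem: in your eight-variable setup the Cox-ring generator corresponding to $ξ$ is absent, so the last morphism of the link, and hence the endpoint $Z_{2,4} ⊆ ℙ(1^5, 2)$, cannot even be written down. Without this second unprojection the core of the proof is missing.
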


\begin{proof}
We exhibit the diagram below.
\begin{equation*}
\begin{tikzcd}[column sep = small]
& \arrow[ld, "{(4, 4, 1, 1)}"'] Y_0 \arrow[rd, ""'] \arrow[rr, dashed, "{(1, 1, -1, -1)}"] & & Y_1 \arrow[ld, ""'] \arrow[rd, ""] \arrow[rr, dashed, "{2 \times (-4, -1, 1, 3)}"] & & Y_2 \arrow[ld, ""'] \arrow[r, "\sim"] & Y_3 \arrow[rd, "{(3, 3, 2, 1)}"] \\
cA_7 \in X \subseteq \mathbb P(1^4, 3) \hspace{-3em} & & W_0 & & W_1 & & & cA_2 \in Z_{2, 4} \subseteq \mathbb P(1^5, 2)
\end{tikzcd}
\end{equation*}

We describe the sextic double solid~$X$. Define $X \subseteq \mathbb P(1, 1, 1, 1, 2, 3, 3, 3)$ with variables $x$, $y$, $z$, $t$, $\beta$, $w$, $\gamma$, $\xi$ by the ideal
\begin{equation} \label{eqn:cA7-2 X}
I_X = (f - 2 e_3 \xi,\, \beta - q_1 r_1 - x t,\, \gamma - q_1 s_2 - x \beta,\, -\xi + t s_2 - \beta r_1),
\end{equation}
where
\[
\begin{aligned}
f & = -w^2 + \gamma^2 + 2 t \beta c_3 + 4 t \gamma b_2 - 2 \beta^2 b_2 - 2 t \beta^2 b_1 + 4 x t^2 \beta b_1 + 2 x^2 t^4 b_0\\
  & - 16 t \gamma a_1^2 + 16 \beta^2 a_1^2 + 4 \beta \gamma a_1 - 8 \beta^3 a_0 + 12 x t \beta^2 a_0 + x t^3 C_2 + t^2 D_4
\end{aligned}
\]
where $C_i$, $D_i \in \mathbb C[y, z, t]$ are homogeneous of degree~$i$.

We describe the weighted blowup $Y_0 \to X$, restriction of $\Phi\colon T_0 \to \mathbb P(1, 1, 1, 1, 2, 3, 3, 3)$. Define $T_0$ by
\[
\begin{array}{ccc|cccccccc}
            &  u & x & y & z & w & \gamma & \beta & \xi & t &\\
\Ta{T_0\colon} &  0 & 1 & 1 & 1 & 3 & 3 & 2 & 3 & 1 & \Tb{.}\\
            & -1 & 0 & 1 & 1 & 4 & 4 & 3 & 5 & 2 &
\end{array}
\]
Define $Y_0 \subseteq T_0$ by the ideal $I_Y$ with the 6 generators
\[
\begin{aligned}
g - 2 e_3 \xi, && u \beta - q_1 r_1 - x t, && u \gamma - q_1 s_2 - x \beta,\\
-u \xi + t s_2 - \beta r_1, && -x \xi + \beta s_2 - \gamma r_1, && -q_1 \xi + t \gamma - \beta^2,
\end{aligned}
\]
where $g = \Phi^*f / u^8$. On the affine patch~$X_x$, we can express $\beta, t$ and $\xi$ in terms of $w, \gamma, y, z$, to get a hypersurface in $\mathbb C^4$ given by~$f_{\operatorname{hyp}} \in \mathbb C[w, \gamma, y, z]$. Note that these coordinate changes lift to $(Y_0)_x$. Since $f_{\operatorname{hyp}}$ has weight~$8$, by \cref{thm:wei cAn wt correct implies Kawakita blup}, $Y_0 \to X$ is a $(4, 4, 1, 1)$-Kawakita blowup.

We show that the first diagram $Y_0 \to W_0 \gets Y_1$ in the 2-ray game of $Y_0$ is an Atiyah flop, provided that $r_1$ and $q_1$ are coprime in~$\mathbb C[y, z]$. Acting by the matrix $\smat{4 & -3\\-1 & 1}$ on the action-matrix of~$T_0$, define $T_1$ by
\[
\begin{array}{ccccc|cccccc}
            &  u &  x & y & z & w & \gamma &  \beta &  \xi &  t &\\
\Ta{T_1\colon} &  3 &  4 & 1 & 1 & 0 & 0 & -1 & -3 & -2 & \Tb{.}\\
            & -1 & -1 & 0 & 0 & 1 & 1 &  1 &  2 &  1 &
\end{array}
\]
Define $Y_1 \subseteq T_1$ by the ideal $I_Y$. The base of the flop is $\mathbb V(q_1) \subseteq \mathbb P^1$ with variables $y, z$, which is one point. Perform a suitable invertible linear coordinate change on $y, z$ such that $q_1 = z$ and $r_1 = y$. Since $u \beta - q_1 r_1 - x t$
is in $I_Y$, we can substitute $z = u \beta - x t$
on the patch where $y$ is non-zero. The coefficients of $\beta$ in $-u \xi + t s_2 - \beta y \in I_Y$
and $\gamma$ in $-x \xi + \beta s_2 - \gamma y \in I_Y$
are non-zero on the patch where $y$ is non-zero. Therefore, we can locally analytically equivariantly express $\beta$ and $\gamma$ in terms of $u, x, w, t$. After substituting $z, \beta, \gamma$, we find that the diagram $Y_0 \to W_0 \gets Y_1$ is locally analytically the Atiyah flop.

The next diagram in the 2-ray game of~$Y_0$ is the flip $Y_1 \to W_1 \gets Y_2$. The base of the flip is $\mathbb V(\gamma^2 - w^2) \subseteq \mathbb P^1$ with variables~$w, \gamma$, which is two points $[1, 1]$ and $[-1, 1]$. We consider the point $P = [1, 1]$, the flip for the other point is similar. Perform a coordinate change $w' = w - \gamma$. On the patch where $\gamma$ is non-zero, we find $u = q_1 s_2 + x \beta$
and $t = q_1 \xi + \beta^2$.
Writing $q_1 = z$ and $r_1 = y$ as before, we find $y = -x \xi + \beta s_2$.
We are left with the principal ideal in $\mathbb C[x, z, w', \beta, \xi]$ generated by $-w'(2 + w') + {}$terms not involving~$w'$. So, we can locally analytically equivariantly express $w'$ in terms of $x, z, \beta, \xi$. So, the diagram $Y_1 \to W_1 \gets Y_2$ is locally analytically two $(-4, -1, 1, 3)$-flips.

The next diagram in the toric 2-ray game $T_2 \to \mathcal W_2 \gets T_3$ restricts to isomorphisms $Y_2 \to W_2 \gets Y_3$. The reason is that the base of the toric flip $P_\beta$ restricts to an empty set in~$W_2$, since $I_Y$ contains the polynomial $t \gamma - \beta^2 - q \xi$.

We show that the last diagram in the 2-ray game of $Y_0$ is a divisorial contraction $Y_3 \to Z$. Multiplying the action-matrix of $T_1$ by $\smat{2 & 3\\1 & 2}$, we see that $T_3$ is given by
\[
\begin{array}{cccccccc|ccc}
            & u & x & y & z & w & \gamma & \beta & \xi &  t &\\
\Ta{T_3\colon} & 3 & 5 & 2 & 2 & 3 & 3 & 1 & 0 & -1 & \Tb{.}\\
            & 1 & 2 & 1 & 1 & 2 & 2 & 1 & 1 &  0 &
\end{array}
\]
Consider the variety $Z \subseteq \mathbb P(1, 1, 1, 1, 1, 2, 2, 2)$ with variables $\xi, u, y, z, \beta, x, w, \gamma$ where $Y_3 \to Z$ is the ample model of $\mathbb V(\xi)$. On the patch $Z_\xi$, we can substitute
$u = s_2 - \beta r_1$, $x = \beta s_2 - \gamma r_1$ and $z = \gamma - \beta^2 $, and compute that $Z_\xi$ is a hypersurface given by a weight $6$ polynomial, with a $cA_2$ singularity at $P_\xi \in Z_\xi$, of type at least~$2$ (see \cref{def:wei type}). These substitutions lift to $(Y_3)_\xi$, showing that $Y_3 \to Z$ is a $(3, 3, 2, 1)$-Kawakita blowup with centre~$P_\xi$. If the coefficients are general, namely when
\[
-2 e_\beta + 8 \beta^4 a_0 r_\beta - 2 \beta^2 b_\beta + 12 \beta^2 a_\beta^2
\in \mathbb C[y, \beta]
\]
is not a full square, where $r_\beta = r_1(y, -\beta^2)$, $e_\beta = e_3(y, -\beta^2)$, $a_\beta = a_1(y, -\beta^2)$ and $b_\beta = b_2(y, -\beta^2)$, then the point $P_\xi$ is exactly of type~$2$.

The variety $Z$ is isomorphic to a complete intersection $Z_{2, 4} \subseteq \mathbb P(1, 1, 1, 1, 1, 2)$ with variables $u, y, z, \beta, \xi, w$. We see this by substituting $x = u \beta - q_1 r_1$
and $\gamma = q_1 \xi + \beta^2$.
We find that $Z$ is isomorphic to $Z_{2, 4}\colon \mathbb V(-u \xi + s_2 - \beta r_1,\, h)$,
where
\[
\begin{aligned}
h & = -w^2 + \xi^2 q_1^2 - 2 e_3 \xi + \beta^4 + 2 b_0 q_1^2 r_1^2 - 4 \beta b_1 q_1 r_1 - 4 u \beta b_0 q_1 r_1 - 12 \beta^2 a_0 q_1 r_1 + 4 \xi b_2 q_1\\
  & - 16 \xi a_1^2 q_1 + 4 \beta \xi a_1 q_1 + 2 \beta^2 \xi q_1 + 2 \beta c_3 + 2 \beta^2 b_2 + 2 u \beta^2 b_1 + 2 u^2 \beta^2 b_0 + 4 \beta^3 a_1 + 4 u \beta^3 a_0\\
  & + (u \beta - q_1 r_1) C_Z + D_Z,
\end{aligned}
\]
where $C_Z = C_2(y, z, u)$ and $D_Z = D_4(y, z, u)$.
\end{proof}

\begin{remark}
We explain below how we found the embedding of~$X$. Using \cref{mai:con} and the coordinate change in $cA_7$ family~7.1, we can write a sextic double solid $\bar X$ with an isolated $cA_7$ in family~7.2 by
\[
\bar X\colon \mathbb V(f - 2 e_3 (t s_2 - \beta r_1),\, \beta - x t - q_1 r_1,\, \gamma - x \beta - q_1 s_2) \subseteq \mathbb P(1, 1, 1, 1, 2, 3, 3)
\]
with variables $x, y, z, t, \beta, \gamma, w$.

We construct a $(4, 4, 1, 1)$-Kawakita blowup $\bar Y_0 \to \bar X$. Define $\bar T_0$ by
\[
\begin{array}{ccc|ccccccc}
                 &  u & x & y & z & w & \gamma & \beta & t &\\
\Ta{\bar T_0\colon} &  0 & 1 & 1 & 1 & 3 & 3 & 2 & 1 & \Tb{.}\\
                 & -1 & 0 & 1 & 1 & 4 & 4 & 3 & 2 &
\end{array}
\]
Let $T_0 \to \mathbb P(1, 1, 1, 1, 2, 3, 3)$ be the ample model of $\mathbb V(x)$ and $Y_0 \subseteq T_0$ the strict transform of~$X$. Then $\bar Y_0$ is given by the ideal $I_{\bar Y} = (\bar g_1, \ldots, \bar g_5)$, where
\[
\begin{aligned}
\bar g_1 & = u g + 2 e_3 (\beta r_1 - t s_2), & \bar g_2 & = u \beta - q_1 r_1 - x t, & \bar g_3 & = u \gamma - q_1 s_2 - x \beta,\\
\bar g_4 & = x g + 2 e_3 (\gamma r_1 - \beta s_2), & \bar g_5 & = q_1 g + 2 e_3 (\beta^2 - t \gamma)).
\end{aligned}
\]
The morphism $\bar Y_0 \to \bar X$ is a $(4, 4, 1, 1)$-Kawakita blowup, as can be checked on the patch $(\bar Y_0)_x \to \bar X_x$.

Note that we do not prove that $I_{\bar Y}$ is saturated with respect to~$u$. In fact, the saturation will not be $I_Y$ if we do not use assume some generality conditions, similarly to $cA_6$ and $cA_7$ family~7.1. As a heuristic argument to see why $I_{\bar Y}$ might be saturated in the general case (``general'' meaning a Zariski open dense set of the parameter space), we can use computer algebra software, pretend that $a_i$, $b_i$, $c_i$, $d_i$, $q_1$, $r_1$, $s_2$, $e_3$ are algebraically independent variables of a polynomial ring over $\mathbb Q$ or $\mathbb Z_p$ for a large prime~$p$, and calculate that the saturation in that case indeed equals the ideal~$I_{\bar Y}$.

Similarly to the diagram $Y_0 \to W_0 \gets Y_1$ in the proof of \cref{thm:mod cA7-2}, the diagram $\bar Y_0 \to \bar{W}_0 \gets \bar Y_1$ is an Atiyah flop, provided $r_1$ and $q_1$ are coprime.

We show that $I_{\bar Y}$ does not 2-ray follow $\bar T_0$, namely that the diagram $\bar Y_1 \to \bar{W}_1 \gets Y_2$ contracts a curve and extracts a divisor. Acting by the matrix $\smat{4 & -3\\-1 & 1}$ on the action-matrix of $\bar T_0$, define $\bar T_1$ by
\[
\begin{array}{ccccc|ccccc}
                 &  u &  x & y & z & w & \gamma &  \beta &  t &\\
\Ta{\bar T_1\colon} &  3 &  4 & 1 & 1 & 0 & 0 & -1 & -2 & \Tb{,}\\
                 & -1 & -1 & 0 & 0 & 1 & 1 &  1 &  1 &
\end{array}
\]
and define $\bar Y_1 \subseteq \bar T_1$ by the zeros of $I_{\bar Y}$. We consider the toric flip $\bar T_1 \to \bar \bar{\mathcal W}_1 \gets \bar T_2$ and restrict it to $\bar Y_1 \to \bar{W}_1 \gets \bar Y_2$. Since $I_{\bar Y}$ is the zero ideal when restricting to $\mathbb V(u, x, y, z, \beta, t)$, the base $\mathbb P^1 \subseteq \bar \bar{\mathcal W}_1$ of the toric flip restricts to $\mathbb P^1 \subseteq \bar{W}_1$ with variables $w, \gamma$. The morphism $\bar Y_1 \to \bar{W}_1$ contracts a curve $\mathbb P^1$ to both of the points $[1, 1]$ and $[1, -1]$ in the base $\mathbb P^1 \subseteq \bar{W}_1$ and is an isomorphism elsewhere. The morphism $\bar{W}_1 \gets \bar Y_2$ extracts a curve $\mathbb P^1$ for every point in the base $\mathbb P^1 \subseteq \bar{W}_1$, so extracts a divisor on~$\bar Y_2$. The diagram $\bar Y_1 \to \bar{W}_1 \gets \bar Y_2$ is not a step in the 2-ray game of~$\bar Y_0$, so $I_{\bar Y}$ does not 2-ray follow $\bar T_0$. The reason for this was that the ideal $I_{\bar Y}$ is contained in $(u, x, y, z)$, so the surface $\mathbb V(u, x, y, z) \subseteq \bar T_2$ exists on $\bar Y_2$, but does not exist on $\bar T_1$.

We ``unproject'' $\bar g_1 = \bar g_4 = \bar g_5 = 0$ with respect to $u, x, y, z$ in $\bar Y_1 \subseteq \bar T_1$, to find a variety $Y_1 \subseteq T_1$. We explain below what we mean by this. We can write the system of equations $\bar g_1 = \bar g_4 = \bar g_5 = 0$ in the matrix form
\[
\pmat{
g & 0 & 0 & \beta r_1 - t s_2\\
0 & g & 0 & \gamma r_1 - \beta s_2\\
0 & 0 & g & \beta^2 - t \gamma
}
\pmat{u\\x\\q_1\\2 e_3} = \bm 0.
\]
If the above equations hold, then we have
\[
\frac{\abs{\pmat{
0 & 0 & \beta r_1 - t s_2\\
g & 0 & \gamma r_1 - \beta s_2\\
0 & g & \beta^2 - t \gamma
}}}{u}
= \frac{\abs{\pmat{
g & 0 & \beta r_1 - t s_2\\
0 & 0 & \gamma r_1 - \beta s_2\\
0 & g & \beta^2 - t \gamma
}}}{-x}
= \frac{\abs{\pmat{
g & 0 & \beta r_1 - t s_2\\
0 & g & \gamma r_1 - \beta s_2\\
0 & 0 & \beta^2 - t \gamma
}}}{q_1}
= \frac{\abs{\pmat{
g & 0 & 0\\
0 & g & 0\\
0 & 0 & g
}}}{-2 e_3}.
\]
Calculating the determinants and dividing by~$-g^2$, we find the equalities
\begin{equation} \label[eqs]{eqn:mod cA7-2 unproj}
\frac{t s_2 - \beta r_1}{u} = \frac{\beta s_2 - \gamma r_1}{x} = \frac{t \gamma - \beta^2}{q_1} = \frac{g}{2 e_3},
\end{equation}
between elements of the field of fractions of $\mathbb C[u, x, y, z, w, \gamma, \beta, t] / I_{\bar Y}$. Using the \cref{eqn:mod cA7-2 unproj}, we see that the morphism $\bar Y_1 \to Y_1$ given by
\[
[u, x, y, z, w, \gamma, \beta, t] \mapsto [u, x, y, z, w, \gamma, \beta, \frac{t s_2 - \beta r_1}{u}, t]
\]
is an isomorphism, where $Y_1$ is described in the proof of \cref{thm:mod cA7-2}.

The coordinate change $\bar Y_1 \to Y_1$ induces an isomorphism $\bar X \to X$, giving the variety~$X$.
\end{remark}

\subsection{\texorpdfstring
{$cA_7$ family 7.3 model}
{cA7 family 7.3 model}} \label{subsec:mod cA7-3}

\begin{proposition} \label{thm:mod cA7-3}
A Mori fibre space sextic double solid with a $cA_7$ singularity in family~7.3 satisfying \cref{def:mod generality conds} has a Sarkisov link to a degree $2$ del Pezzo fibration, starting with a $(4, 4, 1, 1)$-blowup of the $cA_7$ point and followed by two Atiyah flops.
\end{proposition}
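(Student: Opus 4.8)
The plan is to follow the template of the earlier families, the one genuinely new ingredient being that the 2-ray game now terminates in a fibration over $\mathbb{P}^1$ rather than in a divisorial contraction to a point. First I would produce an explicit model of $X$. Starting from the description of family~3 in \cref{mai:con} — where $a_2 = q_2$ with $\deg q_2 = 2$, $r = 1$, $s = s_1$ and $e = e_4$ — and applying the coordinate change of \cref{thm:wei SDS cAn p} with splitting with respect to $t$, the splitting data $p_{<4} = p_2 + p_3$ forces, exactly as in family~1 (\cref{thm:mod cA7-1}), two unprojection variables $\beta$ of weight~$2$ and $\gamma$ of weight~$3$; here $p_2 = q_2$ and $p_3 = q_2 s_1$. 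Thus $X$ is presented as a codimension-three subvariety of a weighted projective space, cut out by a sextic relation $f$ together with the unprojection equations $-\beta + xt + p_2$ and $-\gamma + x\beta + p_3$, the $p_i$ being computed by the explicit splitting lemma (\cref{thm:ana splitting explicit}). I would then define the ambient toric variety $T_0$ by the $(4,4,1,1)$-blowup action-matrix (of the same shape as in family~1), define $Y_0 \subseteq T_0$ by the corresponding ideal $I_Y$, and verify via \cref{thm:wei cAn wt correct implies Kawakita blup} — checking that the defining power series on the patch $(Y_0)_x \to X_x$ has weight $r_1 + r_2 = 8$ — that $Y_0 \to X$ is a $(4,4,1,1)$-Kawakita blowup.

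Next I would treat the flop. Acting on the action-matrix by a suitable matrix in $\mathrm{GL}(2,\mathbb{Z})$, as in \cref{thm:mod cA5,thm:mod cA6}, I would read off that the base of the flop $Y_0 \to \mathfrak{X}_0 \gets Y_1$ restricts to $\mathbb{V}(p_2, p_3) = \mathbb{V}(q_2) \subseteq \mathbb{P}^1$ in the variables $y, z$. This is exactly where family~3 parts company with family~1: there $p_2, p_3$ have no common factor and the base is empty (an isomorphism), whereas here $p_3 = q_2 s_1$ shares the factor $q_2$, and by the hypothesis of \cref{cond:mod} that $q_2$ is not a square, $\mathbb{V}(q_2)$ is two distinct points. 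On the patch where one of $y, z$ is non-zero I would substitute away one coordinate using a generator of $I_Y$ and express the remaining non-toric coordinates equivariantly and locally analytically by the implicit function theorem, exhibiting the diagram as a $(1,1,-1,-1)$-flop above each of the two points; so $Y_0 \dashrightarrow Y_1$ is two Atiyah flops.

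The main obstacle is the final morphism $Y_1 \to \mathbb{P}^1$, which departs from every earlier family in being a fibration rather than a contraction to a point. I would identify the terminal boundary ray of the movable cone whose ample model restricts to this fibration: acting by an appropriate matrix I expect to bring the action-matrix into a form in which the minimal-degree sections of the relevant divisor class are spanned exactly by $y$ and $z$, so that its ample model is the projection $[y:z]$ and realizes $Y_1 \to \mathbb{P}^1$ globally. The crux is then to compute the generic fibre — restricting the defining equations of $Y_1$ (not of $X$) over the generic point of $\mathbb{P}^1$ and simplifying in the $(\mathbb{C}^*)^2$-quotient — and to recognize it as a hypersurface $\{w^2 = (\text{quartic})\}$, that is, a double cover of $\mathbb{P}^2$ branched along a quartic, which is a degree~$2$ del Pezzo surface; the double-cover structure is inherited from that of the sextic double solid. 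Here the hypothesis that $q_2$ is not a square re-enters to ensure that the generic fibre is an irreducible del Pezzo surface rather than a degenerate or reducible one. Finally I would verify the Mori fibre space conditions for $Y_1 \to \mathbb{P}^1$ — terminality and $\mathbb{Q}$-factoriality of $Y_1$, relative Picard number one, and relative ampleness of $-K_{Y_1}$ — which, together with the 2-ray link structure inherited from $T_0$, completes the Sarkisov link.
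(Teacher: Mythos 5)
Your first two steps do track the paper's proof in substance: for family~3 the splitting data is indeed $p_2 = q_2$, $p_3 = q_2 s_1$, the blowup is checked to be a $(4,4,1,1)$-Kawakita blowup via \cref{thm:wei cAn wt correct implies Kawakita blup}, and the flop base is $\mathbb{V}(q_2) \subseteq \mathbb{P}^1$, which is two distinct points by \cref{cond:mod}, giving two Atiyah flops (the paper additionally uses that $q_2$ and $e_4$ have no common factor to carry out the local-analytic identification of the flops; and note that ``$q_2$ is not a square'' is used exactly here, not to control irreducibility of the fibres as you suggest later). The genuine gap is in the final step, in two related ways.

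First, the fibration cannot be $[y:z]$, and no $\operatorname{GL}(2,\mathbb{Q})$ action on the grading matrix can make it so. In any of these models the classes of $y$ and $z$ span the wall at which the flop occurs, i.e.\ they lie in the \emph{interior} of the movable cone; the ample model of that class is the small contraction to $\mathfrak{X}_0$ (its section ring contains $ut$ as well, not only $y,z$), not a morphism to $\mathbb{P}^1$. Concretely, the pencil $\lambda y + \mu z$ has base locus containing the strict transform of the curve $\mathbb{V}(y,z)\cap X$, which survives on $Y_1$, so $[y:z]$ is not even a morphism there. In the paper the fibration is given by the \emph{other} boundary of the effective cone: it is $[\xi : t]\colon Y_2 \to \mathbb{P}(2,1)\cong\mathbb{P}^1$, where $\xi = ts_1 - q_2 - xt$ is a weight-$2$ variable; the fibres are the weighted quadric sections $\{\xi = \lambda t^2\}$, which after eliminating $x$ become quartic hypersurfaces in $\mathbb{P}(1,1,1,2)$, i.e.\ degree $2$ del Pezzo surfaces.

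Second, and this is why the last step cannot be patched in isolation: your choice of embedding, with two unprojection variables $\beta$ (bidegree $(2,3)$) and $\gamma$ (bidegree $(3,4)$) ``exactly as in family~1'', produces an ambient toric variety whose own 2-ray game ends in a divisorial contraction, not a fibration. In that ambient the far boundary ray of the effective cone is spanned by the class of $t$ alone (bidegree $(1,2)$; no other variable has proportional class), so the class of $t$ is not movable, the movable cone stops at the ray of $\beta$, and the last toric map is the ample model of $\mathbb{V}(\beta)$, contracting the divisor $\mathbb{V}(t)$ to a point exactly as in \cref{thm:mod cA7-1}. Since the 2-ray game of $Y_0$ is determined by $Y_0$ alone and, by the paper, ends in a fibration, your ideal cannot 2-ray follow this ambient: somewhere the restriction of the toric game must degenerate, just as for the ``bad'' embeddings $\bar Y_0$ discussed in the remarks following \cref{thm:mod cA4,thm:mod cA7-1,thm:mod cA7-2}. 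The paper's key move in family~3 is to choose a different, codimension-\emph{two} embedding $X = \mathbb{V}(f,\ -\xi + ts_1 - q_2 - xt) \subseteq \mathbb{P}(1,1,1,1,2,3)$ with the single weight-$2$ variable $\xi$; on the blowup $\xi$ then has bidegree $(2,4)$, proportional to the $(1,2)$ of $t$, so the far boundary of the movable cone coincides with that of the effective cone and the last wall is a fibration wall. This choice of model, which your proposal replaces by the family-1 template, is precisely the missing ingredient.
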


\begin{proof}
We exhibit the diagram below.
\begin{equation*}
\begin{tikzcd}[column sep = small]
& \arrow[ld, "{(4, 4, 1, 1)}"'] Y_0 \arrow[rd, ""'] \arrow[rr, dashed, "{2 \times (1, 1, -1, -1)}"] & & Y_1 \arrow[ld, ""'] \arrow[r, "\sim"] & Y_2 \arrow[rd, "{\text{$\operatorname{dP_2}$-fibration}}"] \\
cA_7 \in X \subseteq \mathbb P(1^4, 3) \hspace{-3em} & & W_0 & & & \mathbb P^1
\end{tikzcd}
\end{equation*}

First, we define $X$ and a $(4, 4, 1, 1)$-Kawakita blowup $Y_0 \to X$. Any sextic double solid with an isolated $cA_7$ family~7.3 can be given by a bidegree $(6, 2)$ complete intersection
\[
X\colon \mathbb V(f, -\xi + t s_1 - q_2 - x t) \subseteq \mathbb P(1, 1, 1, 1, 2, 3)
\]
with variables $x, y, z, t, \xi, w$, where
\[
\begin{aligned}
f & = -w^2 + x^2 \xi^2 - 2 \xi e_4 + \xi^2 (s_1^2 + 4 a_1 s_1 + 2 x s_1 - 2 b_2 + 16 a_1^2 + 4 x a_1 + 8 \xi a_0)\\
  & + t (t s_1^4 + 4 t a_1 s_1^3 - 8 t^2 a_0 s_1^3 - 2 \xi s_1^3 + 2 t b_2 s_1^2 - 2 t^2 b_1 s_1^2 - 8 \xi a_1 s_1^2 + 24 t \xi a_0 s_1^2\\
  & + 12 x t^2 a_0 s_1^2 - 2 x \xi s_1^2 + 2 t c_3 s_1 + 4 t \xi b_1 s_1 + 4 x t^2 b_1 s_1 - 16 \xi a_1^2 s_1 - 4 x \xi a_1 s_1\\
  & - 24 \xi^2 a_0 s_1 - 24 x t \xi a_0 s_1 - 2 \xi c_3 - 4 x \xi b_2 - 2 \xi^2 b_1 - 4 x t \xi b_1 + 2 x^2 t^3 b_0 + 16 x \xi a_1^2\\
  & + 12 x \xi^2 a_0 + x t^2 C_2 + t D_4),
\end{aligned}
\]
where $C_i$, $D_i \in \mathbb C[y, z, t]$ are homogeneous of degree~$i$. Define
\[
\begin{array}{ccc|cccccccc}
            &  u & x & y & z & w & \xi & t &\\
\Ta{T_0\colon} &  0 & 1 & 1 & 1 & 3 & 2 & 1 & \Tb{.}\\
            & -1 & 0 & 1 & 1 & 4 & 4 & 2 &
\end{array}
\]
Define $\Phi\colon T_0 \to \mathbb P(1, 1, 1, 1, 2, 3)$ by the ample model of $\mathbb V(x)$, and define $Y_0$ as the strict transform of~$X$. Then, $Y_0$ is given by
\[
Y_0\colon \mathbb V(I_Y) \subseteq T_0 \ \text{ where } \ I_Y = (\Phi^*f / u^8,\, -u^2 \xi + u t s_1 - q_2 - x t),
\]
Using \cref{thm:wei cAn wt correct implies Kawakita blup}, we see that $Y_0 \to X$ is a $(4, 4, 1, 1)$-Kawakita blowup.

We describe the flop $Y_0 \to W_0 \gets Y_1$. Multiplying the action-matrix of $T_0$ by $\smat{1 & -1\\0 & 1}$, we find
\[
\begin{array}{ccc|cccccccc}
               &  u & x & y & z &  w &  \xi &  t &\\
\Ta{T_0 \cong} &  1 & 1 & 0 & 0 & -1 & -2 & -1 & \Tb{.}\\
               & -1 & 0 & 1 & 1 &  4 &  4 &  2 &
\end{array}
\]
The base of the flop is given by $\mathbb V(q_2) \subseteq \mathbb P^1 \subseteq W_0$. After a suitable coordinate change on $y, z$, we find $q_2 = yz$. Consider the flop over $\mathbb V(y)$, the flop over the other point is similar. Since $q_2$ and $e_4$ have no common divisor, on the patch where $z$ is non-zero, we can express $y$ and $\xi$ locally analytically equivariantly in terms of $u, x, t, w$. So, $Y_0 \to W_0 \gets Y_1$ is locally analytically two Atiyah flops.

The morphisms $Y_1 \to W_1 \gets Y_2$ are isomorphisms, since $w^2$ has a non-zero coefficient in~$\Phi^*f / u^8$.

We show that $Y_2$ is a degree $2$ del Pezzo fibration. Multiplying the original action-matrix of $T_0$ by the matrix $\smat{1 & 0\\2 & -1}$ with determinant~$-1$, we find
\[
\begin{array}{cccccc|ccccc}
            & u & x & y & z & w & \xi & t &\\
\Ta{T_2\colon} & 0 & 1 & 1 & 1 & 3 & 2 & 1 & \Tb{.}\\
            & 1 & 2 & 1 & 1 & 2 & 0 & 0 &
\end{array}
\]
The ample model of $\mathbb V(t)$ is
\[
\begin{aligned}
Y_2 & \to \mathbb P(2, 1)\\
[u, x, y, z, w, \xi, t] & \mapsto [\xi, t].
\end{aligned}
\]
Since $\mathbb P(2, 1)$ is isomorphic to $\mathbb P^1$, we see that $Y_2$ is a fibration onto~$\mathbb P^1$. On the patch $(Y_2)_t$, we can substitute $x = u s_1 - q_2 - u^2 \xi$,
to find that the general fibre is a weighted degree $4$ hypersurface in $\mathbb P(1, 1, 1, 2)$, so a degree $2$ del Pezzo surface.
\end{proof}

\subsection{\texorpdfstring
{$cA_8$ model}
{cA8 model}} \label{subsec:mod cA8}

\begin{proposition} \label{thm:mod cA8}
A Mori fibre space sextic double solid with a $cA_8$ singularity satisfying \cref{def:mod generality conds} has a Sarkisov link to a complete intersection $Z_{3, 3} \subseteq \mathbb P(1, 1, 1, 1, 1, 2)$ with a $cD_4$ singularity, starting with a $(5, 4, 1, 1)$-blowup of the $cA_8$ point, followed by a $(4, 1, 1, -1, -2; 2)$-flip, and finally a $(3, 2, 2, 1, 5)$-blowdown to the $cD_4$ singularity. Under further generality conditions, the singular locus of $Z$ consists of 3 points, namely the $cD_4$ point, the $1/2(1, 1, 1)$ singularity and an ordinary double point.
\end{proposition}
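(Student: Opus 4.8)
The proof follows the same strategy as Propositions~\ref{thm:mod cA4}--\ref{thm:mod cA7-3}, and the plan is to exhibit the diagram
\begin{equation*}
\begin{tikzcd}[column sep = small]
& \arrow[ld, "{(5, 4, 1, 1)}"'] Y_0 \arrow[rd, ""'] \arrow[rr, dashed, "{(4, 1, 1, -2, -1; 2)}"] & & Y_1 \arrow[ld, ""'] \arrow[rd, "{(3, 2, 2, 1, 5)}"] \\
cA_8 \in X \subseteq \mb P(1^4, 3) \hspace{-3em} & & \mf X_0 & & cD_4 \in Z_{3, 3} \subseteq \mb P(1^5, 2)
\end{tikzcd}
\end{equation*}
and to verify its three steps — a Kawakita blowup, a single flip, and a weighted blowdown — in turn.

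First I would construct $X$ and the $(5,4,1,1)$-Kawakita blowup $\varphi\colon Y_0 \to X$. Starting from a $cA_8$ sextic double solid described by conditions $2$ to $8$ of \cref{mai:con} (in particular \cref{itm:con condition 7.1}, so $q = 1$), I would apply the coordinate change of \cref{thm:wei SDS cAn p} with $\bm w = (r_1, r_2, a, 1) = (5,4,1,1)$, splitting off the power series $p$ (with leading part $p_2 = a_2$) to introduce the auxiliary variables and relations that exhibit $X$ as a complete intersection in a suitable weighted projective space. I would then define a toric variety $T_0$ by a $(\mb C^*)^2$-action realizing the weighted blowup at $P_x = [1,0,0,0,0]$, take $\Phi\colon T_0 \to \mb P$ to be the ample model of $\mb V(x)$, and set $Y_0 = \mb V(I_Y) \subseteq T_0$ with $I_Y$ generated by $\Phi^* f / u^9$ together with the homogenized relations. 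On the chart $(Y_0)_x \to X_x$ the local defining equation has $\bm w$-weight $r_1 + r_2 = 9$, so \cref{thm:wei cAn wt correct implies Kawakita blup} identifies $\varphi$ as a $(5,4,1,1)$-Kawakita blowup. As in the $cA_4$ and $cA_7$ cases I expect the naive ambient blowup not to $2$-ray follow, so an unprojection will be needed to reach the $T_0$ above; I would record this in a remark.

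Next I would run the $2$-ray game on $T_0$, acting on its action-matrix by matrices in $\mo{GL}(2, \mb Q)$ and reading each toric operation off the resulting matrices. The single middle step is a flip $Y_0 \dashrightarrow Y_1$ with base a single point; in a suitable chart of this point I would eliminate $u$ locally analytically and equivariantly, observe that the resulting ideal is principal and of the shape required by case~(1) of \cite[Theorem~8]{Bro99}, and so conclude that it is a $(4,1,1,-2,-1;2)$-flip contracting a curve through a $1/4(1,1,3)$ point and extracting a curve through a $1/2(1,1,1)$ point and an ordinary double point.

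Finally I would identify the contraction $\psi\colon Y_1 \to Z$. Acting by an appropriate small-determinant matrix puts $T_1$ into a form whose relevant ample model is the $(3,2,2,1,5)$-weighted blowdown to a point $P$, and eliminating the auxiliary variables by substitution exhibits $Z$ as a complete intersection of two cubics $Z_{3,3} \subseteq \mb P(1^5, 2)$. The crux of the argument — and the feature distinguishing this family from every earlier one, whose link endpoints were compound $A$ points or ordinary double points — is to show that $P$ is a $cD_4$ and not a compound $A$ singularity. For this I would apply the explicit splitting lemma (\cref{thm:ana splitting explicit}) on the chart $Z_\beta$ to bring the local equation to the form $-w^2 + g$ with $g$ of multiplicity $3$, and check that the leading cubic of $g$ is nonzero with three distinct roots, so that by \cref{thm:ana stable equivalence,thm:ana lowest degree parts} the general hyperplane section is the $D_4$ singularity $x^2 + y^3 + z^3$ up to coordinates; the generality hypothesis $a_0 \neq A_0$ of \cref{cond:mod} is exactly what controls this leading cubic (it governs the factor $8(a_0 - A_0)s_3^3$ of $h_9$ computed in the proof of \cref{thm:mod Y only up to two quot sing}). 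Checking that the substitutions lift to $(Y_1)_\beta$ then shows $\psi$ is locally analytically the claimed $(3,2,2,1,5)$-blowdown (\cite{Kaw96}), and the statement on $\mo{Sing} Z$ — the $cD_4$ point, a $1/2(1,1,1)$ quotient singularity and an ordinary double point — follows from the same local computations at the flip-extracted locus as in the $cA_6$ and $cA_7$ families. I expect this $cD_4$ identification, that is, the control of the leading cubic, to be the main obstacle.
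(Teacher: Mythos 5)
Your overall strategy (Kawakita blowup via \cref{thm:wei SDS cAn p}, flip identified through case~(1) of \cite[Theorem~8]{Bro99}, endpoint singularity analysed with the explicit splitting lemma) is the paper's, but there is a genuine gap in the middle of the link, and it propagates into a false claim at the end. With the auxiliary variables $\beta, \gamma$ (and $\xi$) introduced by unprojection, the ambient toric variety $T_0$ has \emph{eight} Cox coordinates and its 2-ray game has \emph{three} intermediate diagrams, not one: a wall at the $y,z$-rays, a wall at the $\gamma$-ray, and a wall at the $\beta$-ray. Only the middle one is the $(4,1,1,-1,-2;2)$-flip; you must additionally prove that the first and third toric steps restrict to isomorphisms on the 3-fold. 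The first does so because $r_2$ and $s_3$ are coprime (so $\mb V(r_2, s_3) ⊆ ℙ^1$ is empty), and the third does so \emph{precisely because} $a_0 ≠ A_0$: the defining polynomial contains the monomial $8β^3(A_0 - a_0)$, so the base point $P_β$ of the third toric flip lies on $Y_2$ if and only if $A_0 = a_0$. Your proposal never verifies this step, and indeed cannot, because you have spent the hypothesis $a_0 ≠ A_0$ elsewhere.

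That elsewhere is the second problem: $a_0 ≠ A_0$ does \emph{not} control the leading cubic of the $cD_4$ germ. Working on the chart of the contracted point $P_ξ$ (note: the chart is $Z_ξ$, not $Z_β$), eliminating $x$ by $x = uβ - r_2$ and one more variable using the equation $h=0$ (whose linear part is $-u + 2γ$), the germ becomes $u^2 + 2βr_2 - s_3 + \text{h.o.t.}$ in $u, β, y, z$; the constants $a_0, A_0$ do not appear in the cubic part at all. The non-degeneracy of the cubic $2βr_2 - s_3$ (equivalently, that its general plane restriction has three distinct roots, giving $cD_4$ rather than $cD_{\geq 5}$) follows from the coprimality of $r_2$ and $s_3$, which is built into the family by \cref{itm:con condition 7.1} of \cref{nota:con} — for instance via \cref{thm:mod elementary}, since $\ell^2 \mid 2βr_2 - s_3$ for a linear form $\ell$ would force $\ell^2$ to divide both $r_2$ and $s_3$. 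So the ``main obstacle'' you single out is resolved by a hypothesis that is irrelevant to it, while the place the hypothesis is genuinely needed is left unchecked. A final, smaller point: the identification of the last map as the $(3,2,2,1,5)$-blowdown to a $cD_4$ point should invoke \cite[Theorem~2.3]{Yam18}, not \cite{Kaw96}, which concerns divisorial contractions to terminal \emph{quotient} singularities.
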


\begin{proof}
We exhibit the diagram below.
\begin{equation*}
\begin{tikzcd}[column sep = small]
& \arrow[ld, "{(1, 1, 4, 5)}"'] Y_0 \arrow[r, "\sim"] & Y_1 \arrow[rd, ""] \arrow[rr, dashed, "{(4, 1, 1, -1, -2; 2)}"] & & Y_2 \arrow[ld, ""] \arrow[r, "\sim"] & Y_3 \arrow[rd, "{(3, 2, 2, 1, 5)}"] \\
cA_8 \in X_6 \subseteq \mathbb P(1^4, 3) \hspace{-4em} & & & W_1 & & & \hspace{-1em} cD_4 \in Z_{3, 3} \subseteq \mathbb P(1^5, 2)
\end{tikzcd}
\end{equation*}

First, we describe~$X$ and the weighted blowup $Y_0 \to X$. A sextic double solid with a $cA_8$ singularity can be given by a multidegree $(6, 2, 3)$ complete intersection
\[
X\colon \mathbb V(f,\, \beta - x t - r_2,\, \gamma - x \beta - s_3) \subseteq \mathbb P(1, 1, 1, 1, 2, 3, 3),
\]
with variables $x, y, z, t, \beta, \gamma, \xi$ where
\[
\begin{aligned}
f & = 8 \beta^3 (A_0 - a_0) + \xi (-\xi + 2 \gamma - 8 t A_0 r_2 + 2 t b_2 - 4 t a_1^2 + 4 \beta a_1)\\
  & + t (-16 t \beta A_0^2 r_2 + 2 t \beta c_2 + 4 t \gamma b_1 - 2 \beta^2 b_1 - 2 t \beta^2 b_0 + 4 x t^2 \beta b_0 - 8 t \gamma a_0 a_1 + 8 \beta^2 a_0 a_1\\
  & + 12 \beta \gamma a_0 - 2 t \gamma B_1 + 2 \beta^2 B_1 + 16 t \beta^2 A_0^2 - 16 x t^2 \beta A_0^2 - 8 \beta \gamma A_0 + x t^3 C_1 + t^2 D_3)
\end{aligned}
\]
where $C_i$, $D_i \in \mathbb C[y, z, t]$ are homogeneous of degree~$i$. Note that $B_1 \in \mathbb C[y, z]$. Define
\[
\begin{array}{ccc|cccccccc}
            &  u & x & y & z & \gamma & \beta & \xi & t &\\
\Ta{T_0\colon} &  0 & 1 & 1 & 1 & 3 & 2 & 3 & 1 & \Tb{.}\\
            & -1 & 0 & 1 & 1 & 4 & 3 & 5 & 2 &
\end{array}
\]
Let $\Phi\colon T_0 \to \mathbb P(1, 1, 1, 1, 2, 3, 3)$ be the ample model of $\mathbb V(x)$ and let $Y_0 \subseteq T_0$ be the strict transform of~$X$. Then $Y_0$ is given by
\[
Y_0\colon \mathbb V(I_Y) \subseteq T_0 \ \text{ where } \ I_Y = \mleft( \Phi^*f / u^9,\, u \beta - x t - r_2,\, u \gamma - x \beta - s_3 \mright),
\]
and $Y_0 \to X$ is a $(5, 4, 1, 1)$-Kawakita blowup.

The first diagram in the 2-ray game of $T_0$ restricts to a isomorphisms $Y_0 \to W_0 \gets Y_1$, since $r_2$ and $s_3$ are coprime.

The second diagram in the 2-ray game of $T_0$ restricts to a $(4, 1, 1, -1, -2; 2)$-flip $Y_1 \to W_1 \gets Y_2$. Define the toric variety $T_1$ by multiplying the action matrix of $T_0$ by the matrix $\smat{4 & -3\\3 & -2}$,
\[
\begin{array}{ccccc|ccccc}
            & u & x & y & z & \gamma &  \beta &  \xi &  t &\\
\Ta{T_1\colon} & 3 & 4 & 1 & 1 & 0 & -1 & -3 & -2 & \Tb{.}\\
            & 2 & 3 & 1 & 1 & 1 &  0 & -1 & -1 &
\end{array}
\]
On the patch where $\gamma$ is non-zero, we have $u = x \beta + s_3$
and we can write $\xi$ locally analytically equivariantly in terms of $x, y, z, \beta, t$. We are left with the hypersurface given by $x \beta^2 + \beta s_3 - x t - r_2$
in $\mathbb C^5$ with variables $x, y, z, \beta, t$ with weights $(4, 1, 1, -1, -2)$. The polynomial contains $xt$ and $r_2$, so this corresponds to case~(1) in \cite[Theorem~8]{Bro99}, a $(4, 1, 1, -1, -2; 2)$-flip. Similarly to \cref{thm:mod cA7-1}, the flip contracts a curve containing a $1/4(1, 1, 3)$ singularity, and extracts a curve containing a $1/2(1, 1, 1)$ singularity and a $cA_1$ singularity, which is an ordinary double point if $r_2$ is not a square and is a 3-fold $A_2$ singularity otherwise. The $cA_1$ singularity on $Y_2$ is at $[0, 0, 0, 0, 1, 1, -2 a_0, 1]$.

The third diagram in the 2-ray game of $T_0$ restricts to isomorphisms $Y_2 \to W_2 \gets Y_3$, under \cref{def:mod generality conds}, namely that $a_0 \neq A_0$. On the patch where $\beta$ is non-zero, the base of the toric flip restricts to $\mathbb V(A_0 - a_0, u, x, y, z, \gamma, \xi, t) \subseteq W_2$.

We describe the weighted blowdown $Y_3 \to Z$. Multiplying the action matrix of $T_0$ by the matrix $\smat{5 & -3\\2 & -1}$, the toric variety $T_3$ is given by
\[
\begin{array}{ccccccc|ccc}
            & u & x & y & z & \gamma & \beta & \xi &  t &\\
\Ta{T_3\colon} & 3 & 5 & 2 & 2 & 3 & 1 & 0 & -1 & \Tb{.}\\
            & 1 & 2 & 1 & 1 & 2 & 1 & 1 &  0 &
\end{array}
\]
The ample model of $\mathbb V(\xi)$ is $Y_3 \to Z$ where $Z$ is the tridegree $(3, 2, 3)$ complete intersection
\[
Z\colon \mathbb V(h,\, u \beta - x - r_2,\, u \gamma - x \beta - s_3) \subseteq \mathbb P(1, 1, 1, 1, 1, 2, 2)
\]
with variables $u, y, z, \beta, \xi, x, \gamma$, where
\[
\begin{aligned}
h & = 8 \beta^3 (A_0 - a_0) + \xi (-u \xi + 2 \gamma - 8 A_0 r_2 + 2 b_2 - 4 a_1^2 + 4 \beta a_1)\\
  & - 16 \beta A_0^2 r_2 + 2 \beta c_2 + 4 \gamma b_1 - 2 \beta^2 b_1 - 2 u \beta^2 b_0 + 4 x \beta b_0 - 8 \gamma a_0 a_1 + 8 \beta^2 a_0 a_1\\
  & + 12 \beta \gamma a_0 - 2 \gamma B_1 + 2 \beta^2 B_1 + 16 u \beta^2 A_0^2 - 16 x \beta A_0^2 - 8 \beta \gamma A_0 + x C_Z + D_Z
\end{aligned}
\]
where $C_Z = C_1(y, z, u)$ and $D_Z = D_3(y, z, u)$. Substituting $x = u \beta - r_2$,
we see that $Z$ is isomorphic to a complete intersection of bidegree $(3, 3)$ in $\mathbb P(1^5, 2)$ with variables $u, y, z, \beta, \xi, \gamma$. The variety $Z$ has a $cA_1$ singularity at $[0, 0, 0, 1, -2 a_0, 1]$. We can compute that the point $P_\xi \in Z$ is a $cD_4$ point, by showing the complex analytic space germ $(Z, P_\xi)$ is isomorphic to $(\mathbb V(u^2 + 2 \beta r_2 - s_3 + \mathrm{h.o.t}), \bm 0) \subseteq (\mathbb C^4, \bm 0)$
with variables $u, \beta, y, z$, where $\mathrm{h.o.t}$ are higher order terms in $y, z, \beta$. We can compute that $Y_3 \to Z$ is the divisorial contraction to a $cD_4$ point described in \cite[Theorem~2.3]{Yam18}.
\end{proof}

\paragraph{Acknowledgment}
The author would like to thank Jihun Park for discussions on factoriality.

\newcommand{\etalchar}[1]{$^{#1}$}
\providecommand{\bysame}{\leavevmode\hbox to3em{\hrulefill}\thinspace}
\providecommand{\MR}{\relax\ifhmode\unskip\space\fi MR }
\providecommand{\MRhref}[2]{%
  \href{http://www.ams.org/mathscinet-getitem?mr=#1}{#2}
}
\providecommand{\href}[2]{#2}

\vspace{0.5\baselineskip}
\ShowAffiliations{\\[1\baselineskip]}%

\end{document}